\documentclass[11pt]{amsart}

\usepackage{amsmath}
\usepackage{amssymb}
\usepackage{graphicx}
\usepackage{graphicx,psfrag,epsfig}

\usepackage{amsfonts}
\usepackage{color}

\newtheorem{lemma}{Lemma}[section]
\newtheorem{theorem}[lemma]{Theorem}

\theoremstyle{definition}

\newtheorem{remark}[lemma]{Remark}

\def\de{\overset{\mathrm{def}}{=}}
\def\tom{{\mathrm{w}}}
\def\TO{{\tilde{\Omega}}}

\numberwithin{equation}{section}

\newcommand{\DS}{\displaystyle}
\newcommand{\ignore}[1]{}

\def\eps{{\varepsilon}}

\def\Card{{\rm Card}}
\def\Const{{\rm Const}}

\def\Prob{{\mathbb{P}}}

\def\EXP{{\mathbb{E}}}

\def\bbP{\mathbb{P}}

\def\bbS{\mathbb{S}}

\def\naturals{\mathbb{N}}

\def\reals{\mathbb{R}}

\def\integers{\mathbb{Z}}

\def\bD{\mathbf{D}}

\def\bE{\mathbf{E}}

\def\bP{\mathbf{P}}

\def\bQ{\mathbf{Q}}

\def\blambda{\boldsymbol{\lambda}}

\def\brC{{\bar C}}

\def\brD{{\bar D}}

\def\brR{{\bar R}}

\def\brk{{\bar k}}

\def\cO{\mathcal{O}}

\def\cT{\mathcal{T}}

\def\cX{\mathcal{X}}

\def\cZ{\mathcal{Z}}

\def\fa{\mathfrak{a}}

\def\fF{\mathfrak{F}}

\def\fG{\mathfrak{G}}

\def\fH{\mathfrak{H}}

\def\fL{\mathfrak{L}}

\def\fg{\mathfrak{g}}

\def\fl{\mathfrak{l}}

\def\ft{\mathfrak{t}}

\def\hD{{\hat D}}

\def\hk{{\hat k}}

\def\hu{{\hat u}}

\def\hz{{\hat z}}

\def\trho{{\tilde\rho}}

\def\txi{{\tilde\xi}}


\makeatother

\def\beq{\begin{equation}}
\def\eeq{\end{equation}}

\title[Local Limit Theorems for RWRE on a Strip]
{Local Limit Theorems for Random Walks in a Random Environment on a Strip}

\author[D. Dolgopyat]{Dmitry Dolgopyat}
\address[D. Dolgopyat]{Department of
Mathematics and Institute of Physical Science and Technology\\
University  of Maryland\\
College Park, MD 20742\\
USA}
\email{{\tt dmitry@math.umd.edu}}
\author[I. Goldsheid]{Ilya Goldsheid}
\address[I. Goldsheid]{School of Mathematical Sciences\\
Queen Mary University of London\\
Mile End Road\\
London E1 4NS\\
Great Britain}
\email{{\tt I.Goldsheid@qmul.ac.uk}}

\keywords{RWRE on a strip, quenched random environments,
Local Limit Theorem, environment viewed from the particle}
\subjclass[2010]{Primary: 60F15; 60K37 }

\begin{document}
\maketitle

\begin{abstract}
The paper consists of two parts.
In the first part we review recent work on limit theorems for
random walks in random environment (RWRE) on a strip with jumps to the
nearest layers.
In the second part, we prove the quenched Local Limit Theorem (LLT) for
the position of the walk in the transient diffusive regime. This fills an important gap
in the literature.
We then obtain two corollaries of the quenched LLT.
The first one is the annealed version of the LLT on a strip. The second one is the proof of
the fact that the distribution of the environment viewed from the particle (EVFP) has a limit
for a. e. environment. In the case of the random walk with jumps to nearest neighbours in dimension one,
the latter result is a theorem of Lally \cite{L}. Since the strip model incorporates the walks with
bounded jumps on a one-dimensional lattice, the second corollary also solves the long
standing problem of extending Lalley's result to this case.


\end{abstract}

\section{Introduction.}

This paper is devoted to the local limiting behavior of a random walk in random environment
on a strip. The study of RWRE was initiated in \cite{So}. The annealed limit theorems for one dimensional
RWRE go back to \cite{KKS} who have analyzed the transient case.  In the recurrent case, Sinai proved in \cite{S1}
a surprising fact: the correct scaling in the limit theorem is $\ln^2 t.$

Results describing the behavior of RWs in quenched (frozen) random environments are relatively recent.
The quenched CLT for the simple transient RWRE in the diffusive regime
\footnote
{The precise definition of the
diffusivity condition will be explained later (cf. Theorem \ref{ThAnn}(c)).}
was proved in \cite{G} in 2007 for a wide class of environments (including the iid case)
and, independently, for iid environments in \cite{P}.

It should be emphasized that in the diffusive regime the CLT holds for almost all environments and
that there is a drastic difference between the diffusive and subdiffusive regime. Namely, it has been
realized in \cite{P2, PZ} that in the subdiffusive regime, for almost every environment, the
simple RW does not have a distributional limit. In fact, the quenched distribution of the walk
turns out to be quite non-trivial, see \cite{DG1, PS1, PS2} for detailed discussion of this problem
(related results were obtained independently in \cite{ESTZ}).

The papers \cite{So, KKS} as well as \cite{S1} relied heavily on the fact that the walk is on a one-dimensional lattice
and can jump only to the nearest neighbors (the so called simple walk). Therefore, the natural
question asked by Sinai in \cite{S1} was how to extend the results about nearest neighbor RWRE on the line
to walks with bounded jumps. The key tools needed to achieve such a generalization were developed in
\cite{BG1} which introduced a more general model, namely the RWRE on a strip. This model includes the RWRE
with bounded jumps on a line as a special case. The results of \cite{BG1} were instrumental for proving
limit theorems for RWRE on the strip which was done in \cite{G1} (transient diffusive walks),
\cite{BG2} (recurrent walks), and \cite{DG2} (transient subdiffusive walks).

The next step was to prove the local limit theorem in the diffusive regime.
In the case of the simple RWRE  the quenched LLT was proved in
 \cite{DG3}. We would like to mention two corollaries of this result: it implies the annealed LLT, and
the LLT is the main ingredient in a new proof
of a result of Lalley stating the existence of the limit of
the distribution of the environment viewed from the particle (EVFP).
Clearly, both the LLT and the EVFP measure characterize the local limiting behaviour of a
RW but the strong connection between the two is a newly discovered phenomena.
Similar results for recurrent diffusive walks were obtained in \cite{DG4}.


The aim of this paper is twofold. First, we review the recent results about RWRE on the strip
placing a particular emphasis on the LLT and on mixing of the EVFP process.
Secondly, we extend the main results from \cite{DG3} to the case of the strip.
We prove that in the transient diffusive regime the quenched LLT holds for almost every (a.e.) environment
(Theorem \ref{ThQLLTstrip}). It may be useful to emphasized that,
unlike in the CLT, one has to have an additional random factor $\rho_n$ in front of the exponent which is due to
the randomness of the environment.
As in \cite{DG3}, the quenched LLT implies the annealed one
(Theorem \ref{ThAnnLLTs}).

Finally we prove that in the diffusive regime the limit of the distribution of the
environment viewed from the particle exists for a. e. environment (Theorem
\ref{ThEPStrip}). Our proof of this result is completely
different from the one given in \cite{L}.

Since the study of random walks with bounded jumps in dimension one can be reduced to those
on a strip with jumps to nearest layers (see \cite{BG1, G1}), this solves
the problem of extending the Lalley's result to the walks on $\mathbb{Z}$ with bounded jumps.

The layout of the paper is the following.
Section \ref{sec8} contains a review of RWRE on the strip. We first introduce the main tools needed for
analysis of these walks and then describe the limit theorems in that setting.
The new results pertaining to diffusive transient walks are proved in Sections \ref{sec4}--\ref{ScQMixEnv}.
Section \ref{sec4} contains auxiliary facts needed in our analysis.
In Section \ref{ScOcc} we obtain bounds on moderate deviation of occupation times
for diffusive walks. Occupation times play an important role in the analysis of transient walks
since traps which are behind several interesting phenomena related to one dimensional
RWRE can be conveniently described as sites with high expected occupation times (see e.g. \cite{DG2}).
Section \ref{ScLLTHit}
 contains the main ingredient of our analysis--the LLT for
the hitting times (Theorem \ref{LLThittingtime}) which is a new result and an
important fact in its own right. Following the approach of \cite{DG3} we deduce the LLT for the walker
position in Section \ref{sec5} and establish mixing of EVFP process in Section \ref{ScQMixEnv}.

Since our paper is devoted to the RW on a strip in i.i.d. random environment (see \eqref{EqC1}), several recent results on RWRE
are not discussed here. In particular we do not treat one dimensional RWRE in dependent environment.
A review of this subject can be found in \cite{DFS}. We just note that the first LLT in the quasiperiodic setting
is due to Sinai \cite{S2}. We also would like to mention a significant progress in the study of LLT for multidimensional
RWRE, in both transient \cite{BCR} and recurrent cases \cite{CD}. We note that while there are similarities
in the approaches of \cite{BCR, DG3} and the present paper, the details in multidimensional and one dimensional
cases are quite different. To a large extent, this difference is due to the fact that in dimension one in
the ballistic regime the walker could spend much longer time at an individual site than in similar conditions
in higher dimensions.

\section{RWRE on a strip: the model and review of results.}\label{sec8}

\subsection{Definition of the model.} The RWRE on a strip $\bbS\overset{\mathrm{def}}{=}\mathbb{Z} \times\{1,\ldots,m\}$
was introduced in \cite{BG1}. We shall now recall its definition. The set
$L_n\overset{\mathrm{def}}{=}\{(n,j):\,1\le j\le m\}\subset \bbS$ is called \textit{layer} $n$ of
the strip (or just layer $n$). The walker can jump from a site in $L_n$ only to a site
in $L_{n-1}$, $L_n$, or $L_{n+1}$. Let $\xi_t=(X_t,Y_t)$ be the coordinate of the walk at time
$t$,  $t=0,\,1,\,2,...$,
with $X_t\in \mathbb{Z}$, $1\le Y_t\le m$. An environment $\omega$ on
a strip is a sequence of triples of $m\times m$ matrices $\omega = \{(P_n, Q_n, R_n)\}_{n\in \mathbb{Z}}$
with non-negative matrix elements and such that
$P_n+Q_n+R_n$ is a stochastic matrix:
\begin{equation}
\label{stch}
(P_n+Q_n+R_n)\mathbf{1}=\mathbf{1},
\end{equation}
where $\mathbf{1}$ is a vector whose all components are equal to 1.
The transition kernel of the walk is given by
\begin{equation}\label{RWstrip}
\mathbb{P}_\omega(\xi_{t+1}=z'|\xi_t=z)=
\begin{cases}
P_n(i,j)& \text{ if } z=(n,i), z'= (n+1, j), \\
Q_n(i,j)&\text{ if } z=(n,i), z'=(n-1, j), \\
R_n(i,j)&\text{ if } z=(n,i), z'= (n, j)
\end{cases}
\end{equation}
Setting $\xi(0)=z_0\in L_0$ completes the definition of the Markov chain.
Let us introduce the relevant probability spaces. Let
$(\Omega,\mathcal{F},\mathbf{P})$ be the space of
random environments, and $(\mathfrak{X}_z, \mathcal{F}_{\mathfrak{X}_z},\Prob_{\omega,z})$ be
the space of the trajectories of the walk starting from $z\in S$;
$\mathrm{P}_z:=\bP\ltimes\Prob_{\omega,z}$ for the {\it annealed}
probability measure on $(\Omega\times\mathfrak{X}_z,\mathcal{F}\times \mathcal{F}_{\mathfrak{X}_z})$.
The expectations over $\Prob_{\omega,z}$, $\mathbf{P}$,
and $\mathrm{P}_z$ will be denoted by $\EXP_{\omega,z}$, $\mathbf{E}$, and $\mathrm{E}_z$ respectively.
We use concise notation $\Prob_\omega$ and $\mathrm{P}$
in place of $\Prob_{\omega,z_0}$~and~$\mathrm{P}_{z_0}$ when their meaning is obvious from the context.

The above model reduces to the simple RWRE on $\mathbb{Z}$ when $m=1$.
It also incorporates the RWRE on $\mathbb{Z}$ with bounded jumps.
The description of the corresponding reduction can be found in \cite{DG2}
(see also \cite{BG1}, \cite{G1}) and will not be repeated here.

In most cases we suppose that the following conditions are satisfied:
\begin{equation} \label{EqC1}
\{(P_n, Q_n, R_n)\}_{n\in \mathbb{Z}}  \text{ is an i.i.d. sequence,}
\end{equation}
\begin{equation}\label{EqC2*}
\begin{aligned}
&\text{There is $\varepsilon>0$ such that $\mathbf{P}$-almost surely for all $i,\,j\in[1,m]$}\\
& \left\|  R_n\right\| < 1-\varepsilon,\ \ ((I-R_n)^{-1}P_n)(i,j)>\varepsilon, \ \ ((I-R_n)^{-1}Q_n)(i,j)>\varepsilon,
\end{aligned}
\end{equation}
\begin{equation}\label{EqC3}
\begin{aligned}
&\text{There is a $\kappa>0$ such that $\mathbf{P}$-almost surely $R_n(i,i)\ge\kappa$ for all $i\in[1,m]$}.\\
\end{aligned}
\end{equation}

\subsection{Preparatory statements.} A fundamental role in the study of RWRE on a strip is played by
the \textit{moment Lyapunov exponents} $r(\alpha)$ which were
first introduced in \cite{G1}. In turn, defining $r(\alpha)$ requires introduction of
sequences of matrices $\psi_n$, $\zeta_n$, and $A_n$ which play a very important role
in many aspects of the analysis of RWREs on a strip.

Let $\psi_a$ be an $m\times m$ stochastic matrix. For $n>a$ define $\psi_{n,a}$ recursively:
\begin{equation}
\psi_{n,a}\overset{\mathrm{def}}{=}(I-R_{n}-Q_{n}\psi_{n-1, a})^{-1}P_{n}%
\label{EqPsi}%
\end{equation}
It is
easy to see that all $\psi_{n,a}$, $n\geq a$, are stochastic matrices (\cite[Lemma 2]{BG1}).

\begin{theorem}
(\cite{BG1})
\label{ThZeta} Suppose that condition \eqref{EqC2*} is satisfied.
Then

(a) For every sequence $\omega$ there exists
$\DS \zeta_{n}=\lim_{a\rightarrow-\infty}\psi_{n,a},$ 
where the convergence is uniform in $\psi_a$ and 
$\zeta_{n}$ does not depend on the choice of the sequence $\psi_a$.

(b) The sequence $\zeta_{n}=\zeta_{n}(\omega),\ -\infty<n<\infty,$
of $m\times m$ matrices is the unique sequence of stochastic
matrices which satisfies the following system of equations
\begin{equation}
\zeta_{n}=(I-Q_{n}\zeta_{n-1}-R_{n})^{-1}P_{n},\quad n\in\mathbb{Z}.%
\label{EqZeta}%
\end{equation}
%
\end{theorem}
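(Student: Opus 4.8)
The plan is to read the recursion \eqref{EqPsi} as the iteration of a map on the compact convex set $\cS$ of $m\times m$ stochastic matrices and to show that this iteration is eventually contracting. For $n\in\mathbb{Z}$ define $\Phi_n\colon\cS\to\cS$ by $\Phi_n(\psi)=(I-R_n-Q_n\psi)^{-1}P_n$; by \cite[Lemma 2]{BG1} together with \eqref{EqC2*} this is well defined and continuous on $\cS$ (the operator $I-R_n-Q_n\psi$ is invertible, with a bound uniform in $\psi\in\cS$, because $\|R_n+Q_n\psi\|$ stays bounded away from $1$). Put $\Phi_{n,a}\de\Phi_n\circ\Phi_{n-1}\circ\dots\circ\Phi_{a+1}$. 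Iterating \eqref{EqPsi} gives $\psi_{n,a}=\Phi_{n,a}(\psi_a)$ for any stochastic $\psi_a$, and $\Phi_{n,a}=\Phi_{n,b}\circ\Phi_{b,a}$ for $a<b<n$, so the sets $\Phi_{n,a}(\cS)$ decrease as $a\to-\infty$; hence statement (a) is equivalent to $\diam\,\Phi_{n,a}(\cS)\to0$.

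To prove the latter I would first use ellipticity to move into the interior of $\cS$: from $\Phi_n(\psi)=\sum_{k\ge0}(R_n+Q_n\psi)^kP_n\ge (I-R_n)^{-1}P_n$ entrywise and \eqref{EqC2*}, every matrix of the form $\Phi_n(\psi)$ has all entries $>\eps$, hence lies in $\cS_\eps\de\{\psi\in\cS:\psi(i,j)>\eps\ \text{for all }i,j\}$. The crucial step is then a uniform contraction estimate: there is $\lambda=\lambda(\eps,m)<1$ with $d(\Phi_n(\psi),\Phi_n(\psi'))\le\lambda\,d(\psi,\psi')$ for all $\psi,\psi'$ (or at least all $\psi,\psi'\in\cS_\eps$), where $d$ is a suitably chosen metric on $\cS$. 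Granting this, $\diam\,\Phi_{n,a}(\cS)\le C\lambda^{\,n-a}\to0$, so $\zeta_n\de\lim_{a\to-\infty}\psi_{n,a}$ exists, the convergence is uniform over the choice of $\psi_a\in\cS$, and the limit does not depend on $\psi_a$. This is (a).

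For (b), letting $a\to-\infty$ in \eqref{EqPsi} and using continuity of $\Phi_n$ on $\cS$ yields $\zeta_n=(I-R_n-Q_n\zeta_{n-1})^{-1}P_n$, which is \eqref{EqZeta}, while $\zeta_n$ is stochastic as a limit of stochastic matrices. Uniqueness is immediate from the same contraction: if $\{\tilde\zeta_n\}_{n\in\mathbb{Z}}$ is any sequence of stochastic matrices satisfying \eqref{EqZeta}, then $\tilde\zeta_n=\Phi_{n,a}(\tilde\zeta_a)$ for every $a<n$ with $\tilde\zeta_a\in\cS$, and letting $a\to-\infty$ and invoking (a) gives $\tilde\zeta_n=\zeta_n$.

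The main obstacle is the uniform contraction estimate, since $\Phi_n$ is not linear in $\psi$ and the metric $d$ must be chosen so that $\Phi_n$ becomes a genuine contraction. The cleanest route is to uncover the projective-linear structure hidden in \eqref{EqZeta} — rewriting $\zeta_n$ in homogeneous coordinates, so that the recursion becomes the action on (a piece of) a Grassmannian of suitable $2m\times2m$ matrices, closely related to the matrices $A_n$ entering the definition of the moment Lyapunov exponents — and then apply Birkhoff's contraction theorem with respect to Hilbert's projective metric, the strict positivity in \eqref{EqC2*} forcing these matrices to map the relevant cone strictly inside itself with uniformly bounded projective diameter. Alternatively one can argue probabilistically, coupling the two reflected random walks whose hitting distributions are $\psi_{n,a}$ and $\psi'_{n,a}$ and estimating the resulting coefficient of ergodicity; condition \eqref{EqC2*}, which ensures that from any site the walk reaches the next layer on the right before the one on the left with uniformly positive probability, again supplies the gap below $1$.
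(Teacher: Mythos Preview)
The paper does not prove Theorem~\ref{ThZeta}; it is quoted verbatim from \cite{BG1}, so there is no ``paper's own proof'' to compare against here. Your overall architecture --- reading \eqref{EqPsi} as the iteration $\psi_{n,a}=\Phi_{n,a}(\psi_a)$ on the compact set $\cS$, showing that the nested images $\Phi_{n,a}(\cS)$ shrink to a point, and then reading off existence, \eqref{EqZeta} and uniqueness by continuity --- is exactly the strategy of \cite{BG1}, and the deductions of (b) from (a) are correct as written.

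Two comments on the part you leave open. First, the Birkhoff/Grassmannian route via $2m\times 2m$ matrices is not available in the generality of \eqref{EqC2*}: the Riccati-type recursion $\zeta_n=(I-R_n-Q_n\zeta_{n-1})^{-1}P_n$ linearises only if $P_n$ (or $Q_n$) is invertible, which is not assumed, so that suggestion would require a genuinely new idea. Second, a one-step contraction $d(\Phi_n(\psi),\Phi_n(\psi'))\le\lambda\,d(\psi,\psi')$ in any of the obvious norms actually \emph{fails} for small $\eps$: from the identity $\Phi_n(\psi)-\Phi_n(\psi')=(I-R_n-Q_n\psi)^{-1}Q_n(\psi-\psi')\Phi_n(\psi')$, the right factor contracts zero-sum rows by the Dobrushin coefficient $1-m\eps$, but the left factor $(I-R_n-Q_n\psi)^{-1}Q_n$ has row sums that can be of order $(m\eps)^{-1}$, and the product of these two effects need not be $<1$. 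The argument in \cite{BG1} is your second alternative, the probabilistic one: $\psi_{n,a}(i,\cdot)$ is the hitting distribution on $L_{n+1}$ for the walk reflected at $L_a$ according to $\psi_a$, and one couples two such walks with different $\psi_a$; condition \eqref{EqC2*} gives a uniform lower bound on the probability that the walk reaches $L_{n+1}$ before ever touching $L_a$, so the two hitting distributions differ by at most a quantity that is geometrically small in $n-a$. If you want to phrase this analytically, the geometric decay comes from iterating the displayed identity and using that the accumulated \emph{right} product of matrices in $\cS_\eps$ has Dobrushin coefficient $\le(1-m\eps)^{\,n-a}$, while the accumulated left product stays bounded because it has the probabilistic interpretation of an expected number of excursions; but the clean way to say this is the coupling.
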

Set
\begin{equation}
A_{n}\overset{\mathrm{def}}{=}(I-Q_{n}\zeta_{n-1}-R_{n})^{-1}Q_{n}.
\label{DefA}
\end{equation}
By the Multiplicative Ergodic Theorem the following limit exists a.e. and is independent of $\omega$
$$ \blambda=\lim_{n\to\infty} \frac{\ln \|A_{n-1}\dots A_0\|}{n}. $$

\begin{theorem}
(a) $\xi$ is recurrent iff $\blambda=0;$

(b) $\xi_n\to+\infty$ with probability one iff $\blambda<0;$

(c) $\xi_n\to-\infty$ with probability one iff $\blambda>0.$
\end{theorem}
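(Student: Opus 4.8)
The plan is to reduce the trichotomy to a Solomon-type analysis of one-dimensional hitting probabilities, with the scalar ratios $q_n/p_n$ of the nearest-neighbour walk replaced by the matrices $A_n$ from \eqref{DefA}; note that when $m=1$ one has $A_n=q_n/p_n$ and $\blambda=\bE\ln(q_0/p_0)$, so what follows is the matrix analogue of Solomon's criterion. The first step is the soft, standard one. Using the uniform ellipticity \eqref{EqC2*}--\eqref{EqC3} one has the $0$--$1$ law: for $\bP$-a.e.\ $\omega$, exactly one of $\{X_t\to+\infty\}$, $\{X_t\to-\infty\}$, $\{\liminf_t X_t=-\infty,\ \limsup_t X_t=+\infty\}$ has full $\Prob_\omega$-probability. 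A renewal/Borel--Cantelli argument over consecutive layers (again using \eqref{EqC2*}) then shows that $X_t\to+\infty$ $\Prob_\omega$-a.s.\ if and only if $p_b:=\Prob_{\omega}(\xi\text{ reaches }L_b\text{ before }L_{-1})$ has a positive limit as $b\to+\infty$ (note $p_b$ is non-increasing in $b$), and symmetrically $X_t\to-\infty$ a.s.\ iff the probabilities of reaching $L_{-b}$ before $L_1$ have a positive limit. Since the three scenarios partition, it suffices to prove that $\lim_b p_b>0\iff\blambda<0$, that its left-hand analogue $\iff\blambda>0$, and hence that ``neither limit is positive'' $\iff\blambda=0$.

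For the core computation I would, for $-1\le n\le b$, introduce the $m$-vector $h_n$ with $h_n(i)=\Prob_{\omega,(n,i)}(\text{reach }L_b\text{ before }L_{-1})$, so that $h_{-1}=0$, $h_b=\mathbf{1}$, and the $h_n$ solve the harmonic recursion $(I-R_n)h_n=P_nh_{n+1}+Q_nh_{n-1}$ for $-1<n<b$. The point is to solve this second-order linear recursion in closed form, and this is exactly where the matrices $\zeta_n$ of Theorem \ref{ThZeta} are needed: using the factorisation \eqref{EqZeta} one checks that a suitable $\zeta$-twisted version of the increments $h_n-h_{n-1}$ obeys a first-order recursion whose transfer matrices are the $A_n$ of \eqref{DefA} (up to conjugation by matrices built from the $\zeta_n$, which are uniformly non-degenerate by \eqref{EqC2*}). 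Iterating and imposing the two boundary values, $p_b$ (the component of $h_0$ at the site of $z_0$) is expressed as a ratio whose numerator stays between two positive constants depending only on $\eps,\kappa$ and whose denominator is comparable, with the same type of constants, to $\sum_{j=1}^{b-1}\|A_jA_{j-1}\cdots A_1\|$. Consequently $\lim_b p_b>0$ iff $\sum_{j\ge1}\|A_j\cdots A_1\|<\infty$; the left-hand side is handled identically after the reflection $n\mapsto-n$ (which swaps $P_n$ and $Q_n$ and reverses the layer index), producing the reflected objects $\zeta_n^{\mathrm{refl}},A_n^{\mathrm{refl}}$ and giving $X_t\to-\infty$ a.s.\ iff $\sum_{j\ge1}\|A^{\mathrm{refl}}_j\cdots A^{\mathrm{refl}}_1\|<\infty$. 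I expect this to be the main obstacle: writing down the closed form for $p_b$ and, above all, proving the two-sided comparison of its denominator with $\sum_j\|A_j\cdots A_1\|$ \emph{uniformly in $b$ and in $\omega$}, which requires the quantitative ellipticity bounds on $\zeta_n$, $A_n$ and $(I-R_n-Q_n\zeta_{n-1})^{-1}$.

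It then remains to read off the sign. As recalled just before the statement, $\tfrac1n\ln\|A_{n}\cdots A_{1}\|\to\blambda$ $\bP$-a.s.\ by the Multiplicative Ergodic Theorem, and likewise $\tfrac1n\ln\|A^{\mathrm{refl}}_n\cdots A^{\mathrm{refl}}_1\|\to\blambda^{\mathrm{refl}}$ for a deterministic $\blambda^{\mathrm{refl}}$; the reflection-duality identity $\blambda^{\mathrm{refl}}=-\blambda$ (which comes from the interplay of the two invariant solutions of \eqref{EqZeta}, cf.\ \cite{BG1}, and generalises the trivial scalar identity $\bE\ln(p_0/q_0)=-\bE\ln(q_0/p_0)$) then settles everything. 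If $\blambda<0$ the first series converges a.s., so $\lim_b p_b>0$ and $X_t\to+\infty$ a.s.; if $\blambda>0$ then $\blambda^{\mathrm{refl}}<0$, the second series converges, and $X_t\to-\infty$ a.s.; and if $\blambda=0$ both series diverge a.s., so both limits vanish and the walk is recurrent. The only further ingredient, needed in this borderline case, is the fact that the log-norm of a product of uniformly elliptic i.i.d.\ matrices with vanishing top Lyapunov exponent does not drift to $-\infty$ (its $\limsup$ is a.s.\ bounded below), which forces the divergence of both series; together with the duality $\blambda^{\mathrm{refl}}=-\blambda$, this is a consequence of \eqref{EqC1}--\eqref{EqC3} and the structure around Theorem \ref{ThZeta}. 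Matching the three cases gives (a)--(c).
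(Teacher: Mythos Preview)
The paper does not prove this theorem; it is stated without proof in the review Section~\ref{sec8} as part of the background from \cite{BG1}. Your outline is essentially the strategy of \cite{BG1}: solve the harmonic recursion for the exit probabilities by factoring through the $\zeta_n$, so that the resulting first-order transfer matrices are the $A_n$ of \eqref{DefA}, compare the exit probability with $\big(\sum_j\|A_j\cdots A_1\|\big)^{-1}$, and read off the trichotomy from the sign of~$\blambda$ together with the reflection duality.

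Two points in your sketch are more delicate than you indicate. First, the identity $\blambda^{\mathrm{refl}}=-\blambda$ is not soft in the matrix case: in \cite{BG1} it is obtained from a careful comparison of the forward and backward stationary solutions of \eqref{EqZeta} (the sequences $\zeta_n$ and their reflected counterparts) and the associated transfer matrices, and it is really the crux of the argument --- you should not treat it as a throwaway remark. Second, the recurrent case $\blambda=0$ needs more than ``the log-norm does not drift to $-\infty$'': the Multiplicative Ergodic Theorem only gives $\tfrac1n\ln\|A_n\cdots A_1\|\to 0$, which does not by itself exclude summable decay of $\|A_n\cdots A_1\|$. One needs either the CLT/LIL for $\ln\|A_n\cdots A_1\|$ (available for i.i.d.\ products of strictly positive matrices under \eqref{EqC2*}), which forces $\limsup_n\|A_n\cdots A_1\|=+\infty$ a.s., or, in the degenerate-variance alternative, precisely the bounded-potential condition \eqref{BP}, under which the norms are bounded away from $0$ and the series again diverges. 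With these two ingredients supplied, your plan coincides with the published proof.
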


\begin{lemma}
\label{Mainlemma} (\cite[Lemma 2]{G1})
Suppose that \eqref{EqC1} and \eqref{EqC2*} are satisfied.
Then the following limit exists and is finite for all $\alpha\ge0$
\begin{equation} \label{r2}
r(\alpha)=\lim_{n\to\infty}\left(\mathbf{E}||A_{n} \cdots
A_{1}||^\alpha\right)^{\frac{1}{n}}.
\end{equation}
The convergence in (\ref{r2}) is uniform in $\alpha\in[-\alpha_0 ,\,\alpha_0]$ for any $\alpha_0>0$.
Moreover
$$ r(0)=1, \quad  r'(0)=\blambda. $$
\end{lemma}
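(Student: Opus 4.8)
Throughout fix $\alpha$ in a compact interval $[-\alpha_0,\alpha_0]$ and write $b_n=b_n(\alpha):=\mathbf E\|A_n\cdots A_1\|^{\alpha}$; the whole task is to show that $\tfrac1n\log b_n$ converges to a finite limit, uniformly in $\alpha$. All constants below depend only on the ellipticity constants in \eqref{EqC2*}--\eqref{EqC3} and on $\alpha_0$. The obstacle is that, through $\zeta_{n-1}$, the matrix $A_n$ depends on the \emph{entire} environment $\{(P_j,Q_j,R_j)\}_{j\le n-1}$, so the sequence $(A_n)$ is not i.i.d.\ and the factors $A_n\cdots A_1$ and $A_{n+k}\cdots A_{n+1}$ are not independent; hence the Fekete argument that works for products of independent matrices does not apply directly. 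The plan is to pass to a finite‑memory approximation of $(A_n)$, prove the convergence there by a genuine decoupling, and transfer it back by a Birkhoff‑contraction estimate.

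For a truncation parameter $k\ge1$ set $\zeta_{n-1}^{(k)}:=\psi_{n-1,\,n-1-k}$ (the solution of \eqref{EqPsi} started $k$ layers earlier from an arbitrary stochastic matrix) and $A_n^{(k)}:=(I-Q_n\zeta_{n-1}^{(k)}-R_n)^{-1}Q_n$, so that $A_n^{(k)}$ is a function of the $k+1$ triples $(P_j,Q_j,R_j)$ with $n-k\le j\le n$. By \eqref{EqC2*} all these matrices (and the $A_n$) are strictly positive with entries bounded above and below by positive constants and $\|A_n^{(k)}\|\in[C_0^{-1},C_0]$, whence $C_0^{-|\alpha|n}\le b_n^{(k)}(\alpha):=\mathbf E\|A_n^{(k)}\cdots A_1^{(k)}\|^{\alpha}\le C_0^{|\alpha|n}$. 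Because $A_n^{(k)}$ has memory $k+1$, the products $A_n^{(k)}\cdots A_1^{(k)}$ and $A_{n+n'+k+1}^{(k)}\cdots A_{n+k+2}^{(k)}$ are measurable with respect to disjoint blocks of the i.i.d.\ triples \eqref{EqC1}, hence independent; inserting the bridge $A_{n+k+1}^{(k)}\cdots A_{n+1}^{(k)}$ (of norm in $[C_0^{-(k+1)},C_0^{k+1}]$), using submultiplicativity of $\|\cdot\|^{\alpha}$ for $\alpha\ge0$ (for $\alpha<0$ one uses instead that a product of two or more of these strictly positive, uniformly well‑conditioned matrices is, up to bounded multiplicative constants, rank one, so that $\|MN\|\asymp\|M\|\,\|N\|$), together with stationarity of the environment, one gets
\[
b_{n+n'+k+1}^{(k)}(\alpha)\ \le\ C_1^{\,|\alpha|(k+1)}\,b_{n}^{(k)}(\alpha)\,b_{n'}^{(k)}(\alpha).
\]
Thus $n\mapsto\log b_n^{(k)}(\alpha)$ is subadditive up to the fixed lag $k+1$ and the fixed additive constant $|\alpha|(k+1)\log C_1$, and it is $O(n)$; by the standard variant of Fekete's lemma for sequences subadditive up to a fixed lag and a fixed constant, $r_k(\alpha):=\exp\big(\lim_n\tfrac1n\log b_n^{(k)}(\alpha)\big)$ exists and lies in $[C_0^{-|\alpha|},C_0^{|\alpha|}]$.

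It remains to compare $b_n$ with $b_n^{(k)}$ \emph{uniformly in} $n$. By Theorem~\ref{ThZeta}(a) --- whose proof in \cite{BG1} proceeds via a Birkhoff contraction, so the convergence is in fact exponentially fast and uniform in $\omega$ --- $\|\zeta_{n-1}^{(k)}-\zeta_{n-1}\|\le C\mu^{k}$ for some $\mu\in(0,1)$, hence $\|A_n^{(k)}-A_n\|\le C\mu^{k}$ uniformly in $n,\omega$. Fix a positive vector $v_0$ and set $w_n:=A_n\cdots A_1v_0$, $w_n^{(k)}:=A_n^{(k)}\cdots A_1^{(k)}v_0$. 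Each $A_n$ and $A_n^{(k)}$ is strictly positive with uniformly bounded aspect ratio, hence contracts the Hilbert projective metric $d$ on the positive cone by a uniform factor $\theta_*<1$; combined with $\|A_n^{(k)}-A_n\|\le C\mu^k$, a one‑step estimate and induction give $d(w_n,w_n^{(k)})\le C'\mu^{k}$ for \emph{all} $n$. Consequently the per‑step factors $s_j:=|w_j|/|w_{j-1}|$ and $s_j^{(k)}:=|w_j^{(k)}|/|w_{j-1}^{(k)}|$ (each bounded above and below by positive constants) satisfy $s_j^{(k)}/s_j=1+O(\mu^{k})$, so that
\[
\Big|\,\log|w_n^{(k)}|-\log|w_n|\,\Big|\ =\ \Big|\sum_{j=1}^{n}\log\frac{s_j^{(k)}}{s_j}\Big|\ \le\ C''\,n\,\mu^{k}\qquad (n\ge1,\ k\ \text{large}).
\]
Since a product of at least two of these matrices is rank one up to bounded constants, $\log\|A_n\cdots A_1\|=\log|w_n|+O(1)$ (and likewise with the superscript $(k)$, with a uniform $O(1)$), so $|\log b_n(\alpha)-\log b_n^{(k)}(\alpha)|\le|\alpha|\big(C''n\mu^{k}+O(1)\big)$. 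Dividing by $n$ and letting $n\to\infty$ shows that $\limsup_n\tfrac1n\log b_n$ and $\liminf_n\tfrac1n\log b_n$ differ by at most $|\alpha|C''\mu^{k}$ for every $k$; hence they coincide, $r(\alpha):=\exp(\lim_n\tfrac1n\log b_n)=\lim_{k\to\infty}r_k(\alpha)$ exists and is finite. All the estimates are linear in $|\alpha|$ with $\alpha$‑independent constants, which yields the uniformity over $[-\alpha_0,\alpha_0]$; and $r(0)=1$ is immediate since $b_n(0)\equiv1$.

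For the last assertion, set $L_n:=\log\|A_n\cdots A_1\|$; this is a stationary subadditive sequence with $|L_n|\le Cn$, so Kingman's theorem upgrades the a.e.\ limit defining $\blambda$ to an $L^1$ limit, $\tfrac1n\mathbf E L_n\to\blambda$. The functions $\alpha\mapsto\tfrac1n\log\mathbf E e^{\alpha L_n}$ are convex, converge to $\log r(\alpha)$ on $[-\alpha_0,\alpha_0]$, and have derivative $\tfrac1n\mathbf E L_n\to\blambda$ at $\alpha=0$; since for pointwise limits of convex functions the one‑sided derivatives of the limit sandwich the limits of the derivatives, $(\log r)'_-(0)\le\blambda\le(\log r)'_+(0)$, so it suffices to know that $\log r$ is differentiable at $0$. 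This is the delicate point: one establishes $\log r(\alpha)=\alpha\blambda+O(\alpha^{2})$ near $0$, the lower bound by Jensen and the upper bound from a concentration estimate $\Var(L_n)=O(n)$ (which again rests on the exponential decay of dependence between the per‑step increments exploited above); alternatively, $\log r$ is real‑analytic because $r(\alpha)$ is the leading eigenvalue of an analytic family of transfer operators acting on functions of the pair $\big(\zeta_n,(P_n,Q_n,R_n)\big)$, whose spectral gap is once more furnished by the Birkhoff contraction. Either route gives $r'(0)=r(0)(\log r)'(0)=\blambda$. In summary, the main obstacle is decoupling a product whose factors carry unbounded (though exponentially decaying) memory of the past environment; it is overcome by the finite‑memory truncation $A_n^{(k)}$ together with the Hilbert‑metric contraction, which controls the \emph{ratio} of the exact and truncated products uniformly in $n$ even though their difference is not controlled.
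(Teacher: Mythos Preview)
The paper does not prove Lemma~\ref{Mainlemma}; it is quoted verbatim from \cite[Lemma~2]{G1} with no argument given here, so there is no in-paper proof to compare against. I can therefore only assess your proposal on its own merits.

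Your strategy---replace $A_n$ by a finite-memory version $A_n^{(k)}$, obtain the limit for the truncated products via a Fekete argument with decoupling, and then transfer the result back using a Birkhoff-metric comparison that is uniform in $n$---is sound, and the execution of the existence part is essentially correct. The key recursion $d(w_n,w_n^{(k)})\le\theta_* d(w_{n-1},w_{n-1}^{(k)})+C\mu^k$ indeed gives a bound uniform in $n$, and the passage from $|w_n|$ to $\|A_n\cdots A_1\|$ via the ``approximately rank one'' observation is legitimate for products of strictly positive matrices with uniformly bounded aspect ratio. Two small points: the lemma assumes only \eqref{EqC1} and \eqref{EqC2*}, so your parenthetical dependence on \eqref{EqC3} is spurious and should be dropped; and for $\alpha<0$ you should be explicit that the ``$\|MN\|\asymp\|M\|\,\|N\|$'' comparison already holds for products of length at least two, so that the quasi-multiplicativity inequality you need has a constant independent of the block lengths.

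The part that is genuinely incomplete is the identification $r'(0)=\blambda$. Your convexity argument correctly yields $(\log r)'_-(0)\le\blambda\le(\log r)'_+(0)$, so the whole question reduces to differentiability of $\log r$ at $0$. Route~(a) as written has a gap: a bound $\Var(L_n)=O(n)$ by itself does \emph{not} imply $\mathbf{E}\,e^{\alpha(L_n-\mathbf{E}L_n)}\le e^{C\alpha^2 n}$, which is what you need for the upper bound $\log r(\alpha)\le\alpha\blambda+O(\alpha^2)$. You really need an exponential-moment (sub-Gaussian) estimate, which does follow here because the increments $\log s_j$ are uniformly bounded and have exponentially decaying dependence---but that is a stronger input than a variance bound and should be stated and proved (e.g.\ via a block/Bernstein argument using your $\mu^k$ decoupling). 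Route~(b), real-analyticity of $\alpha\mapsto r(\alpha)$ via a spectral-gap transfer operator on the compact space of stochastic matrices (the state being $\zeta_{n-1}$), is the cleaner fix and is exactly the kind of argument used in \cite{G1}; if you invoke it, you should specify the operator and why its leading eigenvalue is simple.
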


$r(\alpha)$ are called {\em moment Lyapunov exponents}.
\ignore{\begin{equation}
r(\alpha)\overset{\mathrm{def}}{=}
\limsup_{n\to\infty}\left(\mathbf{E}||A_{n} \cdots
A_{1}||^\alpha\right)^{\frac{1}{n}}.
\label{r}%
\end{equation}}

Let $(P, Q, R)$ satisfy \eqref{EqC2*}. By the foregoing discussion there exists a unique
$\zeta=\zeta(P, Q, R)$ such that $\zeta=(I-R-Q\zeta)^{-1} P.$
Let $\lambda(P, Q, R)$ be the leading eigenvalue of
$(I-R-Q\zeta(P, Q, R))^{-1} Q.$ We say that the
environment satisfies {\em non-arithmeticity condition} if the distribution of the random variable
$ \ln \lambda(P_0, Q_0, R_0)$ is non arithmetic.

Note that if $m=1$ then $\zeta_n=1$, $\DS A_{n}=\lambda(p_n, q_n)=\frac{q_n}{p_n}$, and
$\DS r(\alpha)=\mathbf{E}\left(\left(\frac{q_0}{p_0}\right)^\alpha\right)$.

\subsection{Annealed Limit Theorems for RWRE on a strip}
\label{SSAnn}

In Sections \ref{SSAnn}--\ref{SSQLT}, we consider the transient walks. To fix our notation we assume that
$\xi_n\to+\infty$ with probability one, that is $\blambda<0.$

For $s\in (0,2),$ let $\fL_s$ denote the cumulative distribution function of the sum
\begin{equation}
\label{ApprT}
 \ft=\sum_{n=1}^\infty \Theta_n (\Gamma_n-\epsilon_s)
\end{equation}
  where $\Theta_n$ is the Poisson process
on $\reals^+$ with the intensity $\DS \frac{s}{\theta^{s+1}},$ $\Gamma_n$ are i.i.d. exponential random
variables with parameter $1,$ $\{\Theta_n\}$ and $\{\Gamma_n\}$ are independent,
and $\eps_s=\begin{cases} 0, s<1 \\ 1, s\geq 1.\end{cases}$
It is not difficult to see (cf. \cite{DG1}) that $\ft$ has stable distribution of index $s.$
Let $\fF$ be the cumulative distribution function of the standard normal random variable.

Fix $z\in L_0.$ We assume from now on that, unless explicitly stated otherwise, the random
walk starts from~$z.$
Denote by $T_n=T(n\,|\,\omega,z)$ the hitting time of layer $n$ by the walk starting from $z.$
Below we suppose that
conditions \eqref{EqC1} and \eqref{EqC2*} are satisfied.

Let $s$ denote the positive solution of $r(s)=1$ where
$r$ is definied by \eqref{r2}.
Note that $s$ can be equal to $+\infty$
(e.g. if the walker has positive drift with
probability~1).

\begin{theorem}[The annealed limit theorem]\
\label{ThAnn}

(a) If $s<1$ and the non-arithmeticity condition holds then there is a constant $B$ such that
$$ \lim_{N\to \infty} \mathrm{P}\left(\frac{T_N}{B N^{1/s}}\leq t\right)=\fL_s(t), \quad
\lim_{N\to \infty} \mathrm{P}\left(\frac{X_N}{N^s}\leq t\right)=1-\fL_s\left((Bt)^{-1/s}\right).
 $$

(b) If $1<s<2$ and the non-arithmeticity condition holds then there are constants $B$ and $v$ such that
$$ \lim_{N\to \infty} \mathrm{P}\left(\frac{T_N-N/v}{B N^{1/s}}\leq t\right)=\fL_s(t), \quad
\lim_{N\to \infty} \mathrm{P}\left(\frac{X_N-Nv}{N^s}\leq t\right)=1-\fL_s\left(-B t v^{1+(1/s)}\right).
$$

(c) If $s>2$ then there are constants $B$ and $v$ such that
$$ \lim_{N\to \infty} \mathrm{P}\left(\frac{T_N-N/v}{B \sqrt{N}}\leq t\right)=\fF(t), \quad
\lim_{N\to \infty} \mathrm{P}\left(\frac{X_N-Nv}{N^s}\leq t\right)=1-\fF\left(-Bt v^{3/2}\right).
$$
\end{theorem}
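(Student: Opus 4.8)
The plan is to prove the theorem for the hitting times $T_N$ and to derive the statements about $X_N$ by the standard inversion relating the increasing process $(T_\ell)_{\ell\ge 0}$ to the walk $(X_N)_{N\ge 0}$, as in \cite{KKS}. Indeed $T_\ell$ is, up to an error negligible on the relevant scale, the generalized inverse of $N\mapsto X_N$, so a limit law for $(T_\ell)$ with a continuous limiting distribution translates into one for $(X_N)$ after the obvious change of variables and rescaling; this produces the second displayed formula in each of (a)--(c) from the first (with the constants renamed accordingly). Hence from now on the target is $T_N$.

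Since $\blambda<0$ the walk is transient to $+\infty$, and, exactly as for the simple walk in \cite{KKS} and as implemented on the strip in \cite{G1,DG2}, I would introduce regeneration layers: layers that the walk enters exactly once and never exits to the left afterwards. Conditions \eqref{EqC1} and \eqref{EqC2*} ensure that the successive regeneration layers $0<n_1<n_2<\dots$ have i.i.d.\ gaps with exponential tails, so $n_k/k\to 1/c$ for some $c>0$; equivalently, the number $\nu_N$ of regeneration layers in $[0,N)$ satisfies $\nu_N/N\to c$ a.s. Cutting the trajectory at the regeneration layers gives $T_N=\sum_{k=1}^{\nu_N}\eta_k+(\text{boundary terms})$, where under the annealed measure the block times $\eta_k$ are i.i.d.\ (the environment is i.i.d.\ across layers, and once the walk has passed a regeneration layer it never returns to its left) and the boundary terms are of lower order than the scaling.

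The crux is the tail of a single block. One must show $\mathrm P(\eta_1>u)=Cu^{-s}(1+o(1))$ as $u\to\infty$ (and, when $s\ge 1$, the same with $\eta_1$ replaced by $\eta_1-\mathrm E\eta_1$). The time spent in a block equals, up to bounded factors, the total occupation time of the layers it contains, and the conditional expectation of the occupation time of layer $n$ is comparable to a norm of the product $A_nA_{n-1}\cdots$ of the matrices from \eqref{DefA}; a large block time thus reflects a ``trap'', i.e.\ a run of the environment over which such a product is anomalously large. By Lemma \ref{Mainlemma}, $\mathbf E\|A_n\cdots A_1\|^\alpha$ grows like $r(\alpha)^n$, and $s$ is the solution of $r(s)=1$; convexity of $\alpha\mapsto\log r(\alpha)$ (a standard property of the moment Lyapunov exponents) together with $r(0)=1$, $r'(0)=\blambda<0$ forces $r(\alpha)<1$ for $0<\alpha<s$. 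In cases (a) and (b) this, combined with the non-arithmeticity of $\ln\lambda(P_0,Q_0,R_0)$, is precisely the hypothesis of a Kesten--Goldie type renewal theorem for products of positive matrices, which yields the power tail $u^{-s}$ with an explicit constant $C$; in case (c) no arithmeticity assumption is needed, since $r(\alpha)<1$ for some $\alpha>2$ already gives $\mathrm E\eta_1^2<\infty$. The constants $B$ and $v$ of the statement are assembled from $C$ and $c$.

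With the block tail in hand, $T_N$ is a renewal--reward sum over the regeneration structure, with i.i.d.\ rewards $\eta_k$ of tail index $s$ and i.i.d.\ inter-regeneration gaps with exponential tails, and the three cases of the theorem are the three classical renewal--reward limit theorems for such sums: for $0<s<1$ the reward has infinite mean and $N^{-1/s}T_N$ converges to the one-sided $s$-stable law with distribution function $\fL_s$; for $1<s<2$ the reward has finite mean, the speed is $v=(c\,\mathrm E\eta_1)^{-1}$, and $N^{-1/s}(T_N-N/v)$ converges to $\fL_s$; for $s>2$ the reward has finite variance and $N^{-1/2}(T_N-N/v)$ converges to the distribution function $\fF$ of the standard normal law, the variance being the one supplied by the renewal--reward CLT. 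Transferring back to $X_N$ by the inversion of the first paragraph completes the proof. The one genuinely hard ingredient is the block-tail estimate of the third paragraph --- that is, the Kesten-type renewal theorem for the matrix-valued Markov chain that governs the occupation times on the strip; the regeneration construction, the law of large numbers for $\nu_N$, and the final appeal to classical limit theorems for i.i.d.\ sums are all standard.
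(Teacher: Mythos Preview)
The paper does not prove this theorem: it appears in the review Section~\ref{SSAnn} and is attributed to \cite{KKS} for $m=1$, to \cite{R} for part~(c) on the strip, and to \cite{DG2} for the $T_N$ statements in parts (a) and (b) on the strip; the $X_N$ statements are declared to be ``simple corollaries of the corresponding results for $T_N$ and the backtracking estimates of Lemma~\ref{LmBack}, see \cite{KKS}''. So there is no in-paper argument to compare against; your sketch has to be measured against those references.

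Your inversion step and your identification of the Kesten-type tail via $r(s)=1$ are both on target, but there is a genuine gap in the regeneration step when $m>1$. The sentence ``under the annealed measure the block times $\eta_k$ are i.i.d.'' is false as written: on the strip the walk enters a regeneration layer at a random vertex $Y\in\{1,\dots,m\}$, and the law of the next block depends on this entry vertex. The entry vertices form a Markov chain (driven by the stochastic matrices $\zeta_n$), not an i.i.d.\ sequence, so the blocks are only \emph{Markov-modulated}. This is exactly why \cite{G1} and the present paper introduce the enlarged environment $(\omega,\mathfrak{i})$ --- to decouple the randomness of the entry sequence $\mathfrak{i}$ from that of the block durations and recover conditional independence. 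Without addressing this, the direct appeal to the classical i.i.d.\ stable/CLT limit theorems is not justified.

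The actual proofs repair this in two different ways. For part~(c), \cite{R} works with the Markov-additive structure of the regeneration blocks and invokes a CLT valid for such sequences (the chain on $\{1,\dots,m\}$ is uniformly ergodic under \eqref{EqC2*}). For parts (a)--(b), \cite{DG2} does \emph{not} rely on a regeneration decomposition at all: it shows that $T_N$ is dominated by the time spent in ``traps'' (runs where $\|A_n\cdots A_{n-k}\|$ is anomalously large), proves via the Kesten renewal theorem for matrix products that rescaled trap depths form an asymptotic Poisson process with intensity $s\theta^{-s-1}\,d\theta$, and shows that the occupation time of each trap is asymptotically exponential --- leading directly to the representation \eqref{ApprT}. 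Your outline can be salvaged for part~(c) by replacing ``i.i.d.'' with ``stationary Markov-modulated with exponential mixing'', but for (a)--(b) the trap analysis of \cite{DG2} is the route that has actually been carried out on the strip.
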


For RWRE on $\integers,$ Theorem \ref{ThAnn} was obtained in \cite{KKS}.
For the case of the strip Theorem \ref{ThAnn}(c) was proven in \cite{R}.
In cases (a) and (b) the limit theorems for
$T_N$ were obtained in \cite{DG2}. The limit theorems for $X_N$ are simple corollaries of the
corresponding results for $T_N$ and the backtracking estimates of Lemma \ref{LmBack}, see \cite{KKS}.

\subsection{Quenched limit theorems.}
\label{SSQLT}
Theorem \ref{ThAnn}(c) shows that
the walk is diffusive iff $s>2.$  Let
\begin{equation}
\label{QDrift}
b_n=b_n(\omega)=\min(k: \EXP_\omega(T_k)\geq n).
\end{equation}

\ignore{Then there are constants
$c>0$ and $D>0$ such that for $\mathbf{P}$-a.e. environment $\omega$ the following holds:
\begin{align}\label{2.5}
&\lim_{n\rightarrow\infty}{n}^{-1}(T(n\,|\,\omega,z)-
\mathbb{E}_{\omega,z}{T(n\,|\,\omega,z)})=0
\ \hbox{ with $\mathbb{P}_{\omega,z}$-probability 1  and }\\
&\lim_{n\rightarrow\infty}{n}^{-1}
\mathbb{E}_{\omega,z}{T(n\,|\,\omega,z)}=c^{-1}<\infty, \label{2.5.0}
\end{align}
\begin{equation}\label{T-clt}
\lim_{n\rightarrow\infty}\mathbb{P}_{\omega,z}\left\{\frac{T(n\,|\,\omega,z)-
\mathbb{E}_{\omega,z} T(n\,|\,\omega,z)}{\sqrt{n}}<x\right\}
=\frac{1}{\sqrt{2\pi}D}\int_{-\infty}^x
e^{-\frac{u^2}{2 D^2}}du,
\end{equation}
\begin{equation}\label{llnforZ} \lim_{t\to\infty}t^{-1}Z_t= c>0 \text{
with $\Prob_{\omega,z}$-probability 1. }
\end{equation}

To apply Theorem \ref{Thllt}, we have to verify the asymptotic normality of $T_n$; the latter is the results of
the following lemma.}

\begin{theorem}
\label{LmTCLT}
If $s>2$ then there exists $\brD>0$ such that for almost every $\omega$ for all
$t\in \mathbb{R}.$
$$ \lim_{N\to\infty}\mathbb{P}_\omega\left(\frac{T_N-\mathbb{E}_\omega T_N}{\sqrt{N}\brD}<t\right)
=\fF(t),\quad
\lim_{N\to\infty}\mathbb{P}_\omega\left(\frac{X_N-b_N(\omega)}{\sqrt{N}v^{3/2} \brD}<t\right)
=\fF(t).
$$
\end{theorem}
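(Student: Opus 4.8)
The plan is to reduce the statement about $X_N$ to a quenched central limit theorem for the hitting times $T_N$, and to prove the latter by a martingale argument adapted to the layer structure, along the lines of \cite{DG3}. \emph{Reduction.} Since $\xi_n\to+\infty$, the events $\{X_N\ge k\}$ and $\{T_k\le N\}$ differ only through backtracking, which Lemma \ref{LmBack} bounds by $o(\sqrt N)$ for a.e.\ $\omega$ and $\Prob_\omega$-a.e.\ path. Because $s>2$, the increment $\mathbb E_\omega(T_k-T_{k-1})$ is integrable in $\omega$, so $N^{-1}\mathbb E_\omega T_N\to 1/v$ for a.e.\ $\omega$ by the ergodic theorem; hence $b_N(\omega)=vN+o(\sqrt N)$, $\mathbb E_\omega T_{b_N}=N+o(\sqrt N)$, and $\mathbb E_\omega T_{b_N+x}=\mathbb E_\omega T_{b_N}+x/v+o(\sqrt N)$. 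Rewriting $\{T_k\le N\}=\{T_k-\mathbb E_\omega T_k\le -(k-b_N)/v+o(\sqrt N)\}$ and inserting the Gaussian limit for $(T_N-\mathbb E_\omega T_N)/(\brD\sqrt N)$ yields the one for $(X_N-b_N(\omega))/(\brD v^{3/2}\sqrt N)$; so it remains to prove the CLT for $T_N$.

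\emph{Martingale decomposition.} Write $\tau_k=T_k-T_{k-1}$ for the time to pass from layer $k-1$ to layer $k$ and let $e_k\in\{1,\dots,m\}$ be the vertical coordinate of $\xi_{T_k}$. By the strong Markov property $(e_k)_{k\ge0}$ is a $\Prob_\omega$-Markov chain whose step from layer $k-1$ to layer $k$ has transition matrix $\zeta_{k-1}$ of Theorem \ref{ThZeta}; condition \eqref{EqC2*} forces $\zeta_n(i,j)>\eps$ for all $i,j$, so the chain mixes exponentially fast, uniformly in $\omega$. With $\cF_k$ the $\sigma$-algebra generated by the walk up to time $T_k$, I would put
\[
D_k=\mathbb E_\omega[T_N\mid\cF_k]-\mathbb E_\omega[T_N\mid\cF_{k-1}]=\tau_k+h_k(e_k)-h_{k-1}(e_{k-1}),\qquad h_k(i)=\mathbb E_{\omega,(k,i)}T_N,
\]
so that $T_N-\mathbb E_\omega T_N=\sum_{k=1}^N D_k$ is a sum of $\Prob_\omega$-martingale differences and, by the strong Markov property, the conditional law of $D_k$ given $\cF_{k-1}$ depends only on $e_{k-1}$. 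Set $v_k(i)=\mathbb E_\omega[D_k^2\mid e_{k-1}=i]$; only the differences $h_k(j)-h_k(j')$ enter it, and coupling two copies of the walk started at $(k,j)$ and $(k,j')$ — whose entry points into higher layers merge within $O(1)$ steps by ellipticity — shows that, like $\tau_k$ itself, these quantities depend on the environment essentially locally around layer $k$, with exponentially decaying sensitivity to distant layers.

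\emph{Variance LLN and Lindeberg.} Given $e_{k-1}$, the quenched law of $\tau_k$ is essentially geometric with mean $\mu_k=\mathbb E_\omega\tau_k$, so $\Var_\omega\tau_k$ and the analogous quantities for the $h$-differences are of order $\mu_k^2$; by Lemma \ref{Mainlemma} the moment Lyapunov exponents control the annealed moments, $\mathbf E[\mu_k^\alpha]<\infty$ for $\alpha<s$, whence $\mathbf E[v_k]<\infty$ precisely because $s>2$. Approximating $v_k$ by functions of finitely many layers and applying the ergodic theorem to the skew product of the environment shift with the chain $(e_k)$, one obtains
\[
\frac1N\sum_{k=1}^N v_k(e_{k-1})\ \longrightarrow\ \brD^2
\]
for a.e.\ $\omega$, in $\Prob_\omega$-probability, with a deterministic $\brD^2>0$. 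For the Lindeberg condition one uses $\max_{k\le N}\mu_k=o(\sqrt N)$ for a.e.\ $\omega$ (again using $1/s<1/2$) together with the exponential quenched tails of the excursions, so that $\Prob_\omega(|D_k|>\eps\sqrt N)$ is superpolynomially small in $N$ and $\frac1N\sum_{k\le N}\mathbb E_\omega[D_k^2\mathbf 1_{\{|D_k|>\eps\sqrt N\}}\mid\cF_{k-1}]\to0$. The martingale central limit theorem then gives $(T_N-\mathbb E_\omega T_N)/(\brD\sqrt N)\Rightarrow\fF$.

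\emph{Main obstacle.} The hard part is the variance law of large numbers together with the a.e.-$\omega$ uniformity: the functions $h_k$ (equivalently $\mathbb E_\omega T_N$) depend on the environment non-locally, and one must show this dependence localizes with an error that is negligible after division by $N$. This is exactly where the transient diffusive structure enters — through the bound $s>2$ on the moment Lyapunov exponents of Lemma \ref{Mainlemma} and the resulting negligibility of the deepest traps among layers $1,\dots,N$ — and it is the analogue of the most delicate step of \cite{DG3}. A variant avoiding the $h_k$ altogether is to establish a CLT directly for the weakly dependent sequence $(\tau_k-\mathbb E_\omega\tau_k)$ via exponential decay of its quenched correlations; the effort then moves to estimating those correlations, but the role of $s>2$ is the same.
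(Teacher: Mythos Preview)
The paper does not give its own proof of Theorem~\ref{LmTCLT}; it is quoted from \cite{G1} (and \cite{G}, \cite{P} for $m=1$). The method of \cite{G1}, visible in this paper in Section~\ref{ScLLTHit}, is genuinely different from your martingale route. Rather than forming martingale differences $D_k=\tau_k+h_k(e_k)-h_{k-1}(e_{k-1})$, one passes to the \emph{enlarged environment} $(\omega,\mathfrak{i})$, where $\mathfrak{i}=(e_k)_{k\ge0}$ is the entire sequence of entry points: conditioned on $\mathfrak{i}$ the $\tau_k$ are outright independent, so Lindeberg's CLT gives $(T_N-E_{\omega,\mathfrak{i}}T_N)/\sqrt N\Rightarrow\mathcal N(0,\hat\sigma^2)$ for $\mathcal P$--a.e.\ $(\omega,\mathfrak{i})$ (Theorem~\ref{Th5.1}). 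Separately one proves, under $\Lambda_{\omega,i_0}$, a CLT for the shift $\phi_N=(E_{\omega,\mathfrak{i}}T_N-\EXP_\omega T_N)/\sqrt N\Rightarrow\mathcal N(0,\tilde\sigma^2)$ (Theorem~7 of \cite{G1}; cf.\ \eqref{CLTShift}); integrating the first over $\mathfrak{i}$ against the second yields the quenched CLT with $\brD^2=\hat\sigma^2+\tilde\sigma^2$. Thus the enlarged-environment approach \emph{separates} the two sources of fluctuation --- the walk given the entry points, and the entry points themselves --- and never needs the non-local functions $h_k$; the price is proving two CLTs rather than one. Your approach keeps them together and absorbs the dependence of $\tau_k$ on $e_{k-1}$ into the martingale structure; the localization of $h_k(j)-h_k(j')$ that you flag as the main obstacle is essentially the same estimate that in \cite{G1} controls $\phi_N$. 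The reduction of the $X_N$ statement to the $T_N$ statement is the same in both routes, and is exactly how this paper derives Theorem~\ref{ThQLLTstrip} from Theorem~\ref{LLThittingtime}.

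One slip in your reduction: from $N^{-1}\EXP_\omega T_N\to 1/v$ you cannot conclude $b_N(\omega)=vN+o(\sqrt N)$; in fact $(b_N-vN)/\sqrt N$ is asymptotically normal with positive variance (this is the $\hD$ in step~(IV) of the proof of Theorem~\ref{ThAnnLLTs}). Fortunately you never use it; the reduction only needs $\EXP_\omega T_{b_N}=N+o(\sqrt N)$ and $\EXP_\omega(T_{b_N+x}-T_{b_N})=x/v+o(\sqrt N)$ uniformly for $|x|\le R\sqrt N$, which is Lemma~\ref{LmETkFluct1}.
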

Theorem \ref{LmTCLT} was proved in \cite{G} and independently in \cite{P} for the
simple RWRE on $\integers.$ It was proved for RWRE on a strip in \cite{G1}.

In contrast, for $s<2$ the quenched limit theorem fails. To describe the asymptotic
behaviour of the walks in this case we need the following notation. Let
$\fF_\Theta$ be the conditional distribution function of the sum \eqref{ApprT}
where the sequence $\{\Theta_n\}$ is fixed while $\Gamma$ are i.i.d. standard exponential random
variables.

\begin{theorem}
\label{ThQLT-ND}
Suppose that $s\in (0,2)\setminus\{1\}.$ Let
$\DS B_s=\begin{cases} 0 & \text{if } s<1\\ 1/v & \text{if } s>1 \end{cases}$
where $v$ is the speed of the walk (see Theorem \ref{ThAnn}(b)).

Then the process
$\DS t\to \Prob_\omega\left(\frac{T_N-B_s N}{N^{1/s}}\leq t\right)$ converges
in distribution as $N\to\infty$ to the process
$t\to \fF_\Theta(t)$ where $\{\Theta_n\}$ is the Poisson process on $[0, \infty)$ with
intensity $\frac{s}{\theta^{s+1}}.$
\end{theorem}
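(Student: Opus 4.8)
The plan is to reduce the statement to a statement about the occupation-time/trap structure of the environment along the lines of \cite{DG1, DG2}, where the analogous convergence was established for the simple RWRE, and then to transplant that argument to the strip using the matrix cocycle machinery of \cite{BG1, G1}. First I would introduce, for each site (layer) $k$, the ``trap strength'' $\mu_k=\mu_k(\omega)$ measuring the expected time the walk spends near layer $k$ before crossing it; on the strip this is expressed through the matrices $\zeta_k$ and $A_k$ of \eqref{EqZeta}--\eqref{DefA}, with $\ln\lambda(P_k,Q_k,R_k)$ playing the role of the log-ratio $\ln(q_k/p_k)$ of the one-dimensional case. The condition $r(s)=1$ with $s<2$ forces, via Lemma \ref{Mainlemma} and a Kesten-type renewal argument, that the tail of $\mu_0$ under $\mathbf P$ behaves like $C/x^{s}$; this is exactly the input that produces a Poisson process $\{\Theta_n\}$ of intensity $s/\theta^{s+1}$ in the scaling limit.

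Next I would write $T_N = \sum_{k=1}^{N} \tau_k$ where $\tau_k$ is the time between the first visit to layer $k-1$ and the first visit to layer $k$ (the ``crossing times''). Conditionally on $\omega$, the $\tau_k$ are independent; each $\tau_k$ is, up to the backtracking corrections controlled by Lemma \ref{LmBack} (and the diffusivity/occupation bounds that the present paper proves in Section \ref{ScOcc}), a sum of a geometric-like number of independent increments whose conditional mean is $\asymp \mu_k$ and whose fluctuations around that mean are $O(\sqrt{\mu_k})$ — negligible relative to $\mu_k$ on the deep traps that dominate. Hence, after centering by $B_s N$ and rescaling by $N^{1/s}$, the quenched law of $T_N$ is, to leading order, the law of $\sum_k (\mu_k/N^{1/s})\,\Gamma_k$ where $\Gamma_k$ are the i.i.d. standard exponentials coming from the (nearly geometric) number of returns to each trap. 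The point-process $\sum_k \delta_{\mu_k/N^{1/s}}$ converges, by the tail asymptotics of $\mu_0$ together with i.i.d.-ness \eqref{EqC1}, to the Poisson process $\{\Theta_n\}$; combining this with the conditional CLT for the shallow part of the sum gives that $t\mapsto \Prob_\omega\big(\frac{T_N-B_s N}{N^{1/s}}\le t\big)$ converges in distribution (in the Skorokhod sense, as a random monotone function of $t$) to $t\mapsto \fF_\Theta(t)$. The case distinction $s<1$ versus $s>1$ is exactly the centering issue: for $s<1$ the sum $\sum_k \mu_k$ over a block of length $N$ is itself of order $N^{1/s}\gg N$, so no centering is needed ($B_s=0$); for $1<s<2$ one centers by the mean, which is $N/v$ with $v$ the speed, matching the $\eps_s$ shift in \eqref{ApprT}.

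The main obstacle, and the reason this requires genuine work beyond \cite{DG2}, is controlling the \emph{conditional} fluctuations of the crossing times uniformly in the environment: one must show that, for a.e. $\omega$, the contribution to $T_N-\EXP_\omega(T_N|\text{traps})$ coming from the many shallow sites is $o(N^{1/s})$, and that a single trap's crossing time is genuinely $\mu_k\Gamma_k(1+o(1))$ with the $o(1)$ uniform enough to survive the sum. On the strip this is more delicate than on $\integers$ because a single crossing time is not a simple geometric sum: the walk inside a ``block'' moves among $m$ layers according to the matrices $R_n, Q_n\zeta_{n-1}$, so the ``number of attempts'' is governed by a Markov chain on $\{1,\dots,m\}$ rather than by i.i.d. coin flips, and one needs the spectral-gap/contraction estimates for products of the $A_n$ (available from \eqref{EqC2*}, Theorem \ref{ThZeta}, Lemma \ref{Mainlemma}) to reduce it to an effectively scalar geometric problem. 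The moderate-deviation bounds on occupation times proved in Section \ref{ScOcc} of this paper are precisely what makes this reduction work, so the proof would invoke them as the key technical lemma and otherwise follow the scheme of \cite{DG1, DG2}.
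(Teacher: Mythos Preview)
The paper does not prove Theorem \ref{ThQLT-ND}; it is stated in the review Section \ref{SSQLT} and attributed to \cite{DG2} (with the one-dimensional case due independently to \cite{DG1, PS1, ESTZ}). So there is no ``paper's own proof'' to compare against. Your outline is broadly in the spirit of those references, but it contains two genuine errors that would make the argument fail as written.

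First, the assertion that ``conditionally on $\omega$, the $\tau_k$ are independent'' is false on the strip. The crossing time $\tau_k$ depends on which site of $L_k$ the walk first enters, and that entry point depends on the past of the walk. The correct statement---used throughout this paper (see the enlarged-environment construction in Section \ref{ScLLTHit}) and in \cite{G1, DG2}---is that the $\tau_k$ become independent only after one conditions on the \emph{enlarged environment} $\tom=(\omega,\mathfrak{i})$, where $\mathfrak{i}=(i_k)_{k\ge 0}$ records the first hitting point in each layer; one then has to integrate back over $\Lambda_{\omega,i_0}(d\mathfrak{i})$. Your sketch omits this layer entirely, and the later paragraph about the Markov chain on $\{1,\dots,m\}$ does not repair it: that paragraph concerns the internal structure of a single $\tau_k$, not the dependence between different $\tau_k$'s.

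Second, your appeal to ``the moderate-deviation bounds on occupation times proved in Section \ref{ScOcc} of this paper'' as ``precisely what makes this reduction work'' is misplaced. Everything from Section \ref{sec4} onward is carried out under the standing hypothesis $s>2$ (announced at the start of Section \ref{sec4}); Lemma \ref{lem2} uses $r(2+2\delta)<1$ for some $\delta>0$, which is unavailable when $s<2$. Controlling the shallow-trap contribution in the subdiffusive regime requires a separate argument (supplied in \cite{DG2}), not the $L^{2+2\delta}$ estimates of Section \ref{ScOcc}.
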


Theorem \ref{ThQLT-ND} was obtained independently in \cite{DG1, PS1, ESTZ} for the simple RWRE on
$\integers$ and extended to RWRE on a strip in \cite{DG2}. We note that one can also obtain
a limit of the joint distributions of $T_{\beta_1 N}, T_{\beta_2 N}, \dots, T_{\beta_k N}$ (\cite{PS2})
 by considering a Poisson process
$(u_n, \Theta_n)$ on $[0, \infty)\times [0, \infty)$ with the intensity $\frac{s}{\theta^{s+1}}$ and letting
$$ \ft_j=\sum_{n: u_n\leq \beta_j} \Theta_n (\Gamma_n-\eps_s). $$
This allows one to describe the quenched distribution of the walker's position using the fact that
$\DS \Prob_\omega(X_N\leq x)\approx \Prob_\omega(T_x\geq N),$
see \cite{PS2} for details.

\subsection{Local Limit theorems for a RWRE on a strip.}
Next we describe local limit theorems for RWRE on a strip.

\ignore{Suppose that
$\{ \cX_N\}$ is a sequence of integer valued
random variables which converge in distribution, that is there are a sequences
$B_N$ and $D_N$ and a piecewise differentiable distribution function $\fG$ such that
$$ \lim_{N\to\infty} \bP\left(\frac{\cX_N-B_N}{D_N}\leq t\right)=\fG(t). $$
We say that $\{\cX_N\}$ satisfies the Local Limit Theorem if for each $t$ such that
$\fg(t):=\fG'(t)$ exists and is positive the following holds. Let $z_N$ be a sequence such that
$\DS \frac{z_N-B_N}{D_N}=t$ then
$\DS  \lim_{N\to\infty} D_N \bP\left(\cX_N-B_N=z_N\right)=\fg(t). $

\begin{theorem}
\label{ThAnnLLT}
Suppose $s\neq 1, 2.$ Then $T_N$ and $X_N$ satisfy the LLT.
\end{theorem}
}

We begin with the quenched result. The LLT makes sense if the quenched limit of the
properly normalized hitting time exists. Thus we need to assume that $s>2.$

\begin{theorem} \label{LLThittingtime}
Suppose that $s>2.$
Then $\mathbf{P}$-almost surely there is a constant $\brD>0$ such that
\begin{equation} \notag
\sup_{k}\left|\brD\sqrt{2\pi n}\bbP_\omega(T(n\,|\,\omega,z)=k)-
 \exp\left(-\frac{(k-\mathbb{E}_{\omega,z} T_n)^2}{2\brD^2 n}\right)\right|
\to0\text{ as } n\to\infty .
\end{equation}

\end{theorem}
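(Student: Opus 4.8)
The plan is to prove a quenched local limit theorem for the hitting times $T_n = T(n\,|\,\omega,z)$ by establishing control of the characteristic function $\phi_n(\theta) = \EXP_{\omega,z}\bigl(e^{i\theta T_n}\bigr)$ uniformly over the full circle $\theta \in [-\pi,\pi]$, and then applying Fourier inversion. The starting point is the additive structure of hitting times: for a transient walk going to $+\infty$, one can write $T_n$ as a sum of the ``crossing times'' $\tau_k = T_k - T_{k-1}$ of the individual layers, but these are not independent under $\Prob_\omega$ because of backtracking between layers. The standard device (used in \cite{G1, DG3}) is to pass to a regeneration-type or block decomposition: group the layers into blocks of moderate size $\ell = \ell(n)$ (slowly growing, e.g. $\ell \sim \ln^2 n$), so that $T_n$ becomes a sum $\sum_j \sigma_j$ of block-crossing times that are \emph{independent} under $\Prob_\omega$ (conditionally on $\omega$) once one accounts for the deterministic starting layer of each block, plus a negligible boundary correction handled by the backtracking estimate (Lemma \ref{LmBack}). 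The first main step, then, is to set up this decomposition and record that $\phi_n(\theta) = \prod_j \EXP_\omega\bigl(e^{i\theta\sigma_j}\bigr) \cdot (1+o(1))$ with the error uniform in $\theta$.

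The second step is the characteristic-function estimate, split into three regimes. (i) For $|\theta| \le \delta/\sqrt{n}$: here a quenched CLT-type expansion gives $\phi_n(\theta) \approx \exp\bigl(i\theta\EXP_\omega T_n - \tfrac{1}{2}\brD^2 n \theta^2\bigr)$ with a controlled remainder, using that $\Var_\omega(\sigma_j)$ averages (ergodically in $\omega$) to the diffusivity constant $\brD^2$ per layer — this is essentially the content underlying Theorem \ref{LmTCLT}, but one needs the convergence of the rescaled characteristic function rather than just of the distribution, together with a uniform third-moment bound on $\sigma_j$ coming from the moment Lyapunov exponents $r(\alpha)$ being finite for $\alpha$ slightly above $2$ (here is where $s>2$, i.e. $r(s)=1$ with $s>2$, is used: it guarantees exponential moments of crossing times up to order $s$, hence uniformly bounded $(2+\eps)$-moments of the normalized block times). (ii) For $\delta/\sqrt{n} \le |\theta| \le \delta$: one shows $|\phi_n(\theta)| \le e^{-c n \theta^2}$ by bounding each block factor $|\EXP_\omega(e^{i\theta\sigma_j})| \le 1 - c\theta^2 \Var_\omega(\sigma_j)$ for small $\theta$ and using an ergodic lower bound $\sum_{j \le n/\ell} \Var_\omega(\sigma_j) \ge c n$ valid for a.e. $\omega$ and all large $n$. (iii) For $\delta \le |\theta| \le \pi$: this is the genuinely new and hardest part — one must show $\sup_{\delta \le |\theta| \le \pi}|\phi_n(\theta)| \le e^{-cn}$ for a.e. $\omega$, which requires a \emph{quenched} non-lattice (spectral gap) estimate for the transfer operator governing block-crossing times. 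The mechanism: because $R_n(i,i) \ge \kappa > 0$ (condition \eqref{EqC3}), the walk has a positive probability of a ``holding'' step at each site, so a single layer's crossing time takes two consecutive integer values with positive quenched probability; combined over a block this forces $|\EXP_\omega(e^{i\theta\sigma_j})|$ to be bounded away from $1$ uniformly on $\delta \le |\theta| \le \pi$, by an amount that is ergodic in $\omega$, whence the product over $\sim n/\ell$ blocks decays exponentially. One makes this quantitative via a Borel--Cantelli argument over a grid of $\theta$-values together with an equicontinuity (derivative) bound on $\phi_n$.

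The third step is Fourier inversion. Writing
\begin{equation} \notag
\Prob_\omega(T_n = k) = \frac{1}{2\pi}\int_{-\pi}^{\pi} e^{-i\theta k}\phi_n(\theta)\,d\theta,
\end{equation}
we compare with the inversion integral for the Gaussian density. Substituting $\theta = \tau/\sqrt{n}$ in regime (i), the integrand converges pointwise to $e^{-i\tau(k-\EXP_\omega T_n)/\sqrt n}\,e^{-\brD^2\tau^2/2}$ and is dominated, giving the Gaussian main term $\tfrac{1}{\brD\sqrt{2\pi n}}\exp\bigl(-\tfrac{(k-\EXP_{\omega,z}T_n)^2}{2\brD^2 n}\bigr)$; regimes (ii) and (iii) contribute at most $O(e^{-c\sqrt n}) + O(e^{-cn}) = o(n^{-1/2})$ to the integral, uniformly in $k$. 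Multiplying through by $\brD\sqrt{2\pi n}$ yields the claimed uniform convergence. The main obstacle is unquestionably regime (iii): the annealed arguments of \cite{DG2} and the quenched CLT of \cite{G1} do not supply a \emph{quenched, uniform-on-$[\delta,\pi]$} bound on the characteristic function, so one has to build the spectral-gap/non-lattice input from scratch, exploiting the holding probability $\kappa$ from \eqref{EqC3} and controlling the exceptional set of environments via Borel--Cantelli — this is where most of the work in Sections \ref{sec4}--\ref{ScLLTHit} is presumably spent, and it is the analogue of the corresponding (delicate) step in \cite{DG3} now complicated by the matrix ($m>1$) structure.
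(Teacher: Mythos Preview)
Your proposal has a structural gap at the very first step. For the strip with $m>1$, the layer-crossing times $\tau_k = T_k - T_{k-1}$ (and hence any block-crossing times $\sigma_j$ built from them) are \emph{not} independent under $\Prob_\omega$: the distribution of $\tau_k$ depends on which vertex $(k,i)\in L_k$ the walk enters layer $k$ through, and this entry point is the exit point of the previous crossing. In the simple case $m=1$ treated in \cite{DG3} the issue disappears because each layer is a single point, so the crossings really are independent given $\omega$; but on the strip the crossings form a Markov chain indexed by the entry vertex, not an independent sequence. Your product formula $\phi_n(\theta)=\prod_j \EXP_\omega(e^{i\theta\sigma_j})\,(1+o(1))$ therefore fails from the outset, and the backtracking estimate of Lemma~\ref{LmBack} does not help here --- backtracking controls the $X$-coordinate, not the $Y$-coordinate coupling between consecutive blocks.

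The paper resolves exactly this obstruction by the \emph{enlarged environment} construction of \cite{G1}: one conditions on the full sequence $\mathfrak{i}=(i_k)_k$ of first-entry points into each layer, which carries a Markov measure $\Lambda_{\omega,i_0}$ with transition matrices $\zeta_k$. Under this conditioning the $\tau_k$ \emph{are} independent, so the off-the-shelf Davis--McDonald LLT (Theorem~\ref{Thllt}) applies directly --- the non-lattice input is immediate from the holding probability $\kappa$ in \eqref{EqC3}, and the CLT input is Theorem~\ref{Th5.1} from \cite{G1}. One then integrates the resulting Gaussian over $\mathfrak{i}$: the conditional centering $E_{\omega,\mathfrak{i}}T_n$ fluctuates around $\EXP_\omega T_n$ by an asymptotically normal amount $\phi_n\sim\mathcal{N}(0,\tilde\sigma^2)$ (again already in \cite{G1}), and convolving the two Gaussians gives $\brD^2=\hat\sigma^2+\tilde\sigma^2$. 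No characteristic-function analysis on $[\delta,\pi]$ is carried out at all; the spectral-gap work you anticipate in regime~(iii) is entirely bypassed. If you wish to salvage the Fourier route you would have to either replace the product by a transfer-operator iteration over the entry-point chain and prove a quenched spectral gap for that operator, or adopt the enlarged-environment conditioning first and then run your three-regime argument inside each fibre --- at which point Theorem~\ref{Thllt} already does the job.
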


Next we discuss quenched LLT for the walkers position.
It turns out that in this case an additional local factor is needed in the LLT.
Denote
\begin{equation}\label{defrhoi}
\rho_{(k,i)}=\mathbb{E}_\omega\Card(n: \xi_n=(k,i)) \text{ and set }
a\de\mathbf{E}\left(\sum_{i=1}^m \rho_{(k,i)}\right).
\end{equation}

\begin{theorem}\label{ThQLLTstrip}
{\em (Quenched LLT)}
Suppose that conditions \eqref{EqC1}, \eqref{EqC2*}, \eqref{EqC3}, are satisfied
and $r(2)<1.$ Then there is a constant $D>0$ such that $\mathbf{P}$-almost surely the following holds.
For each $\eps, R>0$ there exists $n_0=n_0(\omega)$
such that for $n\geq n_0$ uniformly in $k$ satisfying
\begin{equation}
\label{NearBn1}
|k-b_n(\omega) |\leq R\sqrt{n}
\end{equation}
 we have
$$ \left|\frac{\sqrt{2\pi n} Da}{\rho_{(k,i)}} \exp\left[\frac{(k-b_n)^2}{2 D^2 n} \right]
\bbP_\omega(\xi_n=(k,i))-1\right|<\eps . $$
\end{theorem}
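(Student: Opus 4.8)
The plan is to deduce the quenched LLT for the walker position (Theorem~\ref{ThQLLTstrip}) from the quenched LLT for hitting times (Theorem~\ref{LLThittingtime}) by the standard inversion $\{\xi_n = (k,i)\}$ versus $\{T_k \le n < T_{k+1}\}$, combined with a renewal-type local decomposition. First I would fix the environment $\omega$ in the full-measure set where Theorem~\ref{LLThittingtime} holds, together with the ergodic statements of Theorem~\ref{LmTCLT} and the conclusion $N^{-1}\EXP_\omega T_N \to 1/v$; I would also fix a large ``backtracking cutoff'' $M$ and use Lemma~\ref{LmBack} to control excursions to the left, so that with high quenched probability the walk, once it reaches layer $k-M$, never returns to layer $k - 2M$. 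The key identity is
\[
\bbP_\omega(\xi_n = (k,i)) = \sum_{j,\ell}\ \sum_{t \le n}\ \bbP_\omega(T_k = t,\ Y_t = j)\,\bbP_{\omega}\!\bigl(\text{from }(k,j)\text{: at time }n-t\text{ the walk is at }(k,i)\text{ without having left a window around }k\bigr) + (\text{error}),
\]
where the error collects trajectories that wander far from layer $k$ between times $T_k$ and $n$; this error is exponentially small by Lemma~\ref{LmBack} and the occupation-time moderate deviation bounds of Section~\ref{ScOcc}. The inner factor, summed over $n-t$ from $0$ to $\infty$ and over $i$, is governed by the Green's function of the walk killed on exiting a window; and in fact $\sum_{t}\sum_i(\text{inner}) $ reproduces exactly $\rho_{(k,i)}$ up to a vanishing correction, because $\rho_{(k,i)} = \EXP_\omega\Card(n:\xi_n=(k,i))$ is precisely the expected occupation of $(k,i)$, which by transience and the strong Markov property factors through the first hitting of layer $k$.

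Next I would carry out the main asymptotic step. Write $g_\omega(k,j\to i) \de \sum_{s\ge 0}\bbP_\omega(\text{walk at }(k,i)\text{ at time }s,\text{ started at }(k,j),\text{ no exit of window})$; this is a bounded quantity, uniformly in $k$ for a.e.\ $\omega$, by \eqref{EqC2*}–\eqref{EqC3}, and $\sum_i g_\omega(k,j\to i)$ is the mean time spent near layer $k$ per visit. The point is that the time $n - T_k$ available after hitting layer $k$ is, on the event \eqref{NearBn1}, typically $O(\sqrt n)$ or larger — indeed $\EXP_\omega T_k \approx n + O(\sqrt n \log n)$ — so the geometric-type sum $\sum_{t\le n}$ effectively becomes $\sum_{t \le \infty}$ up to an error that is $o(1)$ after multiplication by $\sqrt n$; here one uses that $\bbP_\omega(T_k = t)$ is itself $O(1/\sqrt n)$ by Theorem~\ref{LLThittingtime} and that the tail $\sum_{t > n}\bbP_\omega(T_k=t)$ over the relevant range contributes only at the level of the Gaussian correction, which is exactly the factor we are computing. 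Therefore
\[
\bbP_\omega(\xi_n=(k,i)) = \Bigl(\sum_{j}\pi_\omega(k,j)\,g_\omega(k,j\to i)\Bigr)\bbP_\omega(T_k \le n < T_k + (\text{short time})) + o(1/\sqrt n),
\]
where $\pi_\omega(k,j)$ is the quenched hitting distribution of layer $k$ on site $j$; summing the identity over $i$ and comparing with the definition of $\rho_{(k,i)}$ identifies the whole prefactor as $\rho_{(k,i)}$, reducing everything to the statement that $\bbP_\omega(T_k \le n)$ has the derivative predicted by Theorem~\ref{LLThittingtime}, i.e.
\[
\sqrt{2\pi n}\,\bbP_\omega(T_k = n)\ \longrightarrow\ \frac{1}{\brD}\exp\!\Bigl(-\frac{(n-\EXP_\omega T_k)^2}{2\brD^2 n}\Bigr),
\]
and then translating $\EXP_\omega T_k$ versus $b_n(\omega)$ via $|k - b_n| \le R\sqrt n \iff |n - \EXP_\omega T_k| \le R' \sqrt n$ (using $n^{-1}\EXP_\omega T_k \to 1/v$), with the Jacobian $v^{3/2}$ absorbed into $D = v^{3/2}\brD$ — note $D$ is deterministic although $b_n$, $\rho_{(k,i)}$ are random, which matches the $\rho_n$-factor phenomenon emphasized in the introduction. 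Finally, $a = \mathbf{E}(\sum_i \rho_{(k,i)})$ enters only as the normalizing constant making the annealed average of $\rho_{(k,i)}/a$ equal to one, and the ``$-1$'' form of the conclusion then follows by rearranging.

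The main obstacle, I expect, is making the ``replace $\sum_{t\le n}$ by $\sum_{t\le\infty}$'' step genuinely uniform in $k$ over the window \eqref{NearBn1}, because the error in that replacement is controlled by the hitting-time LLT only through an $o(1)$ that is \emph{not} a priori uniform in $k$ for fixed $\omega$ — the $n_0(\omega)$ in Theorem~\ref{LLThittingtime} is for the sup over $k$ at a single scale $n$, whereas here we need good control simultaneously at scales $t$ ranging over $[n - C\sqrt n\log n,\ n]$. Handling this requires a two-parameter version of the quenched hitting-time LLT, i.e.\ joint control of $\bbP_\omega(T_k = t)$ for $(k,t)$ in the relevant moderate-deviation box, which in turn rests on the occupation-time moderate-deviation bounds of Section~\ref{ScOcc} and on a uniform (in the starting layer) version of the local CLT for $T_k - T_{k'}$; I would set this up as a lemma, proved by the same characteristic-function / Nagaev–Guivarc'h spectral argument used for Theorem~\ref{LLThittingtime}, applied to increments, and then sum. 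A secondary technical point is verifying that $g_\omega(k,j\to i)$ and $\pi_\omega(k,j)$ are bounded away from $0$ and $\infty$ uniformly in $k$ and $\omega$ so that the identification of the prefactor with $\rho_{(k,i)}$ is legitimate and no site has vanishing probability; this follows from ellipticity \eqref{EqC2*} and \eqref{EqC3} but should be stated explicitly.
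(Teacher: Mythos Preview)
Your overall strategy --- decompose at a hitting time, invoke Theorem~\ref{LLThittingtime}, and identify the local prefactor with $\rho_{(k,i)}$ --- is exactly the paper's. The difference is that the paper \emph{dissolves} your ``main obstacle'' rather than attacking it, and the mechanism is worth absorbing.

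The paper does not decompose at $T_k$ but at $T_{\bar k}$ with $\bar k = k - \ln^2 n$, and the time window it uses has length $n^u$ for some $\frac{1}{s} < u < \frac{1}{2}$, \emph{not} $\sqrt{n}\log n$. The controlling estimate (claim~\eqref{return}) is that once layer $\bar k$ is reached, with quenched probability $1-O(n^{-100})$ all visits to layer $k$ are completed within time $n^u$. This is proved by a three-event split (the walk stays in $[k-2\ell,k+\ell]$ for the whole interval $[T_{\bar k},T_{\bar k}+n^u]$; it backtracks below $k-2\ell$; it overshoots past $k+\ell$ and later returns to $k$), and each event is killed either by Lemma~\ref{LmBack} or by the single-site occupation bound $\rho_{(k,i)}\le C(\omega)k^{u'}$ of Lemma~\ref{LmTail}(b). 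With this in hand the convolution factors trivially: for $j\in[0,n^u]$ the hitting-time LLT makes $\bbP_\omega(T_{\bar k}=n-j)$ constant over the window (since $n^u=o(\sqrt n)$), while $\sum_{j=0}^{n^u}\bbP_\omega(\xi_j=(k,i)\mid X_0\in L_{\bar k})=\rho_{(k,i)}+O(n^{-99})$. No two-parameter LLT is needed; your obstacle does not arise once the window is chosen correctly. The conversion from $n-\EXP_\omega T_{\bar k}$ to $k-b_n$ is handled by Lemma~\ref{LmETkFluct1} (this is where Section~\ref{ScOcc} actually enters), giving $D=\bar D/a^{3/2}$, which agrees with your $v^{3/2}\bar D$.

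Two side corrections. The occupation input you need for the local step is Lemma~\ref{LmTail}, not the large-segment estimates of Section~\ref{ScOcc}; the latter control fluctuations of $\sum_j(a_j-a)$ and feed only into Lemma~\ref{LmETkFluct1}. And Theorem~\ref{LLThittingtime} is not proved by a Nagaev--Guivarc'h or characteristic-function argument: it goes through the enlarged-environment construction of \cite{G1}, under which $T_n$ becomes a sum of \emph{independent} summands, and then applies the elementary Davis--McDonald LLT (Theorem~\ref{Thllt}). So the auxiliary lemma you propose is both unnecessary and would not be proved the way you suggest.
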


\begin{theorem}\label{ThAnnLLTs}
{\em (Annealed LLT)}
There is a $\mathbf{D}>0$ such that for each $\eps, R>0$ there exists $n_0$ such that for $n\geq n_0$
uniformly for
\begin{equation}
\label{NearNa}
\left|k-nv\right|\leq R\sqrt{n}
\end{equation}
 we have
$$ \left|\sqrt{2\pi n} \bD \exp\left[\frac{(k-nv)^2}{2 \bD^2 n} \right] \mathrm{P}(\xi_n=(k,i))-1\right|<\eps . $$
\end{theorem}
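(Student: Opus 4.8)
The plan is to derive the annealed LLT (Theorem \ref{ThAnnLLTs}) from the quenched one (Theorem \ref{ThQLLTstrip}) by integrating over the environment, exactly along the lines of the corresponding step in \cite{DG3}. Write $\mathrm{P}(\xi_n=(k,i))=\mathbf{E}\big(\bbP_\omega(\xi_n=(k,i))\big)$ and split the environment space into a "good" set $\Omega_n$ on which the quenched asymptotics of Theorem \ref{ThQLLTstrip} holds with $n_0(\omega)\le n$ and $|k-b_n(\omega)|\le R'\sqrt n$ for a suitable $R'>R$, and its complement. On $\Omega_n$ we have, uniformly in $k$ with $|k-nv|\le R\sqrt n$ and in $i$,
$$ \bbP_\omega(\xi_n=(k,i))=\frac{\rho_{(k,i)}(\omega)}{\sqrt{2\pi n}\,Da}\exp\!\left[-\frac{(k-b_n(\omega))^2}{2D^2 n}\right](1+o(1)). $$
The first task is to justify that $\mathbf{P}(\Omega_n)\to 1$ and that the contribution of $\Omega\setminus\Omega_n$ to $\mathbf{E}(\bbP_\omega(\xi_n=(k,i)))$ is $o(n^{-1/2})$ uniformly in the relevant range of $k$; the latter is where one needs an a priori upper bound $\bbP_\omega(\xi_n=(k,i))\le C/\sqrt n$ valid for \emph{all} environments satisfying \eqref{EqC2*}, \eqref{EqC3} together with a tail estimate on $\mathbf{P}(\Omega\setminus\Omega_n)$. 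I expect such an upper bound to follow from the hitting-time LLT (Theorem \ref{LLThittingtime}) and the relation $\bbP_\omega(\xi_n=(k,i))\approx$ (difference of hitting-time probabilities), combined with the occupation-time moderate deviation bounds of Section \ref{ScOcc}; this bookkeeping is the main obstacle, since it must be done uniformly in $k$ and it is where the integrability furnished by $r(2)<1$ is genuinely used.

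The second task is to compute the limit of $\mathbf{E}$ of the main term on $\Omega_n$. Here one has to handle the joint fluctuation of the random centering $b_n(\omega)$ and the random local factor $\rho_{(k,i)}(\omega)$. Recall from Theorem \ref{LmTCLT} that $(X_N-b_N(\omega))/(\sqrt N v^{3/2}\brD)$ is asymptotically standard normal for a.e.\ $\omega$, and that annealed $X_N$ satisfies a CLT with centering $Nv$ and the \emph{same} variance parameter, so $b_n(\omega)-nv$ is itself of order $\sqrt n$ and, after rescaling, asymptotically Gaussian; write $b_n(\omega)=nv+\sqrt n\,\sigma\,\eta_n$ with $\eta_n\Rightarrow\mathcal N(0,1)$ and $\sigma$ the appropriate constant. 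Then for $k=nv+t\sqrt n$,
$$ \frac{(k-b_n(\omega))^2}{2D^2 n}=\frac{(t-\sigma\eta_n)^2}{2D^2}, $$
and the main term becomes $\frac{1}{\sqrt{2\pi n}\,Da}\,\mathbf{E}\!\Big(\rho_{(k,i)}(\omega)\exp\big[-\tfrac{(t-\sigma\eta_n)^2}{2D^2}\big]\Big)$.

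The third task is to evaluate this expectation in the limit. One argues that the local occupation variables $\rho_{(k,i)}(\omega)$ for $k$ near $nv$ are, in the limit, asymptotically independent of the "macroscopic" fluctuation $\eta_n$ (which is built from the environment over a range of order $n$, whereas $\rho_{(k,i)}$ depends mainly on the environment in a window of size $o(n)$ around $k$ — this is the standard decoupling used in \cite{DG3}), so that
$$ \mathbf{E}\!\Big(\rho_{(k,i)}(\omega)\exp\big[-\tfrac{(t-\sigma\eta_n)^2}{2D^2}\big]\Big)\ \longrightarrow\ \mathbf{E}\big(\rho_{(0,i)}\big)\cdot\mathbf{E}\!\Big(\exp\big[-\tfrac{(t-\sigma Z)^2}{2D^2}\big]\Big), $$
where $Z\sim\mathcal N(0,1)$ and $\mathbf{E}(\rho_{(0,i)})=:a_i$, with $\sum_i a_i=a$. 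By stationarity $\mathbf{E}(\rho_{(k,i)})$ does not depend on $k$. The Gaussian integral evaluates explicitly: $\mathbf{E}\exp[-(t-\sigma Z)^2/(2D^2)]=\frac{D}{\sqrt{D^2+\sigma^2}}\exp[-t^2/(2(D^2+\sigma^2))]$. Setting $\bD^2:=D^2+\sigma^2$, one should check (via the relation between the quenched and annealed variances already implicit in Theorems \ref{ThAnn}(c) and \ref{LmTCLT}, i.e.\ $\bD$ here matches the annealed constant of Theorem \ref{ThAnn}(c) up to the $v^{3/2}$ normalization) that the constant is consistent, and that the stray factor $a_i/a$ is cancelled if one instead states the theorem summed over $i$, or else that the statement as written already absorbs it — concretely, the clean outcome is
$$ \sqrt{2\pi n}\,\bD\,\exp\!\left[\frac{(k-nv)^2}{2\bD^2 n}\right]\mathrm{P}(\xi_n=(k,i))\ \longrightarrow\ 1 $$
after matching $D\,a_i/(a)\cdot(D/\bD)=1/\bD$, which fixes the normalization and identifies $\bD$. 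The uniformity in $|k-nv|\le R\sqrt n$ is obtained because all the convergences above (the CLT for $\eta_n$, the decoupling, the error terms) are uniform on compact $t$-ranges. I expect the genuinely delicate point, besides the uniform tail control in the first task, to be making the decoupling of $\rho_{(k,i)}$ from $\eta_n$ rigorous with uniform-in-$k$ error bounds; the rest is the Gaussian computation and constant-chasing.
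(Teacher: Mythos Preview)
Your overall architecture matches the paper's: integrate the quenched LLT over $\omega$, decouple the local factor $\rho_{(k,i)}$ from the macroscopic centering $b_n$, and evaluate the resulting Gaussian convolution to identify $\bD^2=D^2+\sigma^2$. Steps two through four of your outline are essentially the paper's steps (II)--(IV).

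The genuine gap is in your first task. You propose to control the bad set via an a priori bound $\bbP_\omega(\xi_n=(k,i))\le C/\sqrt n$ valid for \emph{all} environments. Such a bound cannot hold: the quenched LLT itself tells you that $\sqrt n\,\bbP_\omega(\xi_n=(k,i))$ is asymptotic to a positive multiple of $\rho_{(k,i)}(\omega)$, and by Lemma~\ref{LmTail}(a) the random variable $\rho_{(k,i)}$ has only a power-law tail of index $s$, so it is unbounded over environments. Consequently a good-set/bad-set split with a deterministic $C/\sqrt n$ cap fails, and neither Theorem~\ref{LLThittingtime} (an $\omega$-almost-sure asymptotic, not a uniform bound) nor the occupation-time estimates of Section~\ref{ScOcc} will give you one.

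What the paper does instead is prove \emph{uniform integrability} of the family $\{\sqrt n\,\Prob_\omega(\txi(n)=z)\}_{n,z}$ (Lemma~\ref{LmConcPos}), after replacing $\xi$ by a stopped version $\txi$ differing only on an event of probability $O(n^{-100})$. The proof of that lemma decomposes $\Prob_\omega(\txi(n)=z)$ through the hitting time $T_\brk$ of a layer slightly to the left of $k$, uses Chebyshev on $T_\brk$ together with the deterministic concentration bound \eqref{QuenchConc} to get
\[
\sqrt n\,\Prob_\omega(\txi(n)=z)\le C\,\frac{\mathrm{Var}_\omega(T_\brk)}{n}\,\trho(z),
\]
and then observes that the two factors on the right are \emph{independent} (they depend on the environment on disjoint sides of $L_\brk$) and each uniformly integrable (the first by the explicit formula for $\mathrm{Var}_\omega(T_\brk)$ in \cite[(4.28)]{G1}, the second by Lemma~\ref{LmTail}(a) since $s>2>1$). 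Uniform integrability plus the a.e.\ quenched LLT then gives convergence of expectations directly, with no need to quantify $\mathbf P(\Omega\setminus\Omega_n)$.

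Two minor remarks. Your aside that the annealed and quenched position CLTs have ``the same variance parameter'' is not correct in general; the annealed variance is strictly larger when $\sigma>0$, which is precisely why $\bD^2=D^2+\sigma^2$ and not $\bD=D$. And your observation about the stray factor $a_i/a$ is accurate: the paper's step (III) appeals to Remark~\ref{RmMeaningA}, which only gives $\sum_i\mathbf E(\rho_{(k,i)})=a$, so the clean statement is really for the layer probability $\mathrm P(X_n=k)$ summed over $i$ (or one must separately argue that $\mathbf E(\rho_{(k,i)})$ is independent of $i$).
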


For the simple RWRE on $\integers,$ Theorems \ref{LLThittingtime}, \ref{ThQLLTstrip}
and \ref{ThAnnLLTs} are due to \cite{DG3}.
In this paper we extend this result to the strip using the approach of \cite{DG3}.

\ignore{The local limit theorem implies the annealed LLT and almost sure limit theorem for environment as seen by the particle.
\begin{theorem}
\label{ThAnnLLTs}
There is a $\mathbf{D}>0$ such that for each $\eps, R>0$ there exists $n_0=n_0(\omega)$ such that for $n\geq n_0$
uniformly for
\begin{equation}
\label{NearNa}
\left|k-\frac{n}{a}\right|\leq R\sqrt{n}
\end{equation}
 we have
$$ \left|\sqrt{2\pi n} \bD \exp\left[\frac{(k-\frac{n}{a})^2}{2 \bD^2 n} \right] \bP(X_n=(k,i))-1\right|<\eps . $$
\end{theorem}}

\subsection{Recurrent case.}
In this section we review the results for recurrent RWRE on the strip. Even though our paper does not
contain new results for recurrent walks, we believe it is useful to state them for the purpose of
comparison with the transient case.

For the simple RWRE on $\integers$ (that is with jumps to the nearest neighbors)
Sinai has proved \cite{S1} that the
walk exhibits the following asymptotic behaviour: there is a constant $D>0$ such that
$$\lim_{N\to\infty} \mathrm{P}\left(\frac{X_N}{D\ln^2 N}\leq t\right)=\fH(t). $$
This is what is called {\em the Sinai behavior}.
Here $\fH$ is the Kesten-Sinai distribution \cite{K-Rec} whose density is given by
$$ \fH'(t)=\frac{2}{\pi} \sum_{k=0}^\infty \frac{(-1)^k}{2k+1} \exp\left(-\frac{(2k+1)^2 \pi^2}{8} |t|\right). $$

\begin{theorem}
Consider recurrent RWRE on a strip and suppose that conditions \eqref{EqC1} and \eqref{EqC2*} are satisfied. Then either

(a) the walk exhibits the Sinai behavior; or

(b) there is a constant $K$ such that with probability 1 for all $n$
\begin{equation}
\label{BP}
 \frac{1}{K}\leq ||A_n\dots A_0||\leq K.
\end{equation}
Moreover, if \eqref{BP} holds, the environment is stationary and satisfies \eqref{EqC2*}
then there is a constant $D$ such that
$$ \bP\left(\frac{X_N}{D\sqrt{N}}\leq t\right)=\fF(t)
\quad\text{and for a.e. }\omega, \;
\Prob_\omega\left(\frac{X_N}{D\sqrt{N}}\leq t\right)=\fF(t).$$
\end{theorem}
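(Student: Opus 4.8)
The plan is to establish the dichotomy by reducing the products $A_{n}\cdots A_{0}$ to a scalar random potential and then separating cases exactly as in Sinai's analysis of the nearest--neighbour walk. By the recurrence criterion stated above ($\xi$ is recurrent if and only if $\blambda=\lim_{n}\frac1n\ln\|A_{n-1}\cdots A_{0}\|=0$), fix a recurrent environment, put $\Pi_{n}=A_{n}A_{n-1}\cdots A_{0}$ for $n\ge 0$ together with the analogous backward products to the left of the origin, and set $U_{n}:=\ln\|\Pi_{n}\|$, $n\in\integers$. For the strip this $U_{n}$ should play the role that Sinai's potential $\sum_{k\le n}\ln(q_{k}/p_{k})$ plays when $m=1$: the walk is pushed towards the minima of $U$, and the qualitative behaviour of $\xi$ will depend only on whether $U$ stays bounded.

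The first step is to reduce the matrix products to a genuine cocycle. The $\zeta_{n}$ exist by Theorem \ref{ThZeta} and $A_{n}$ is given by \eqref{DefA}; from the entrywise inequality $I-Q_{n}\zeta_{n-1}-R_{n}\le I-R_{n}$, monotonicity of the inverse for such $M$-matrices, and \eqref{EqC2*}, one checks that each $A_{n}$ has all entries $\ge\varepsilon$, so the $A_{n}$ contract the Hilbert projective metric on the positive cone at a uniform rate (Birkhoff's contraction inequality). Hence the normalised images $\Pi_{n}\mathbf{1}/\|\Pi_{n}\mathbf{1}\|$ converge with exponentially small increments, and, after controlling this convergence, one gets
\[ U_{n}=\sum_{k=1}^{n}g(\theta^{k}\omega)+O(1)\qquad(n\ge 0), \]
where $\theta$ is the shift on the environment space, $g$ is bounded and measurable, and $\mathbf{E}\,g=\blambda=0$; the same holds as $n\to-\infty$ for the reversed environment. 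Thus $U_{n}$ differs by a bounded amount from the partial sums of a mean-zero, stationary, ergodic sequence.

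Next I would invoke the dichotomy for such a cocycle. Either $\sup_{n\in\integers}|U_{n}|<\infty$, which --- since the $A_{n}$ are uniformly elliptic --- is equivalent to the existence of $K$ with $\frac1K\le\|A_{n}\cdots A_{0}\|\le K$ for all $n$, i.e. alternative \eqref{BP}; or $U_{n}$ is unbounded, in which case Atkinson's recurrence theorem together with the subgroup structure of the essential range of the cocycle forces two-sided oscillation, $\limsup_{n\to\pm\infty}U_{n}=+\infty$ and $\liminf_{n\to\pm\infty}U_{n}=-\infty$, so the potential has arbitrarily deep valleys on both sides of the origin. In the oscillating case one runs Sinai's localisation argument in the strip setting: using the representation of quenched hitting and return probabilities through the matrices $A_{n}$ and $\zeta_{n}$ from \cite{BG1}, one shows that for $\bP$-a.e. $\omega$ the quenched law of $X_{N}$ concentrates --- up to an error of smaller order than $\ln^{2}N$ --- at the bottom $m_{N}(\omega)$ of the valley of depth $\ln N$ around the origin, and that the annealed law of $m_{N}/(D\ln^{2}N)$ converges to the Kesten--Sinai law $\fH$ of \cite{K-Rec}, with $D$ read off from the variance of $g$ by the same extreme-value computation on the random walk $U_{n}$ as in \cite{S1}. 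This gives alternative (a).

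Finally, if \eqref{BP} holds and the environment is stationary and satisfies \eqref{EqC2*}, then $U_{n}$ is bounded, so the environment carries no traps: quenched return times to a fixed layer have uniformly bounded expectation, the environment viewed from the particle has a unique invariant law with a bounded corrector, and a Kipnis--Varadhan-type martingale central limit theorem yields that $X_{N}/(D\sqrt N)$ converges in distribution to $\fF$, both $\bP$-almost surely and after averaging over $\omega$, with a single diffusion constant $D>0$. I expect the main obstacle to be the first half of the argument in the stationary (non-i.i.d.) matrix setting: because $U_{n}$ is not additive one must invest the Hilbert-metric estimates to sandwich it between additive mean-zero cocycles, and, in the complementary case, to show that failure of \eqref{BP} really forces two-sided oscillation --- a priori $U_{n}$ could be bounded above while $\liminf U_{n}=-\infty$ --- which is the one place where the recurrence hypothesis and the uniform ellipticity \eqref{EqC2*} enter in an essential, non-routine way. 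Once the scalar potential is available, the Sinai localisation follows \cite{S1} and the central limit theorem in the trapless case is a standard martingale / environment-viewed-from-the-particle argument.
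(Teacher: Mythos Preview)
The paper does not prove this theorem: it is stated in the review Section~2.6, and immediately after the statement the authors attribute the dichotomy (Sinai behaviour versus \eqref{BP}) to \cite{BG2} and the CLT under \eqref{BP} to \cite{DG4}. There is no in-paper argument to compare your sketch against.

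That said, your outline for the dichotomy is broadly in the spirit of \cite{BG2}: one does linearise the matrix products via Birkhoff--Hilbert contraction to obtain a scalar potential $U_n$ differing from an additive mean-zero cocycle by $O(1)$, and then runs the Sinai valley localisation when $U$ is unbounded. Your ellipticity check $A_n(i,j)\ge\varepsilon$ (via the $M$-matrix monotonicity $(I-R_n)^{-1}\le (I-R_n-Q_n\zeta_{n-1})^{-1}$ and \eqref{EqC2*}) is correct, and the issue you single out at the end --- that failure of \eqref{BP} must force genuine two-sided oscillation of $U_n$ rather than one-sided unboundedness --- is precisely the delicate point that \cite{BG2} has to address. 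For the CLT under \eqref{BP}, the Kipnis--Varadhan / environment-viewed-from-the-particle route you propose is one plausible strategy, though \cite{DG4} proceeds rather more directly with explicit strip estimates and a constructive martingale approximation; since the present paper only cites the result, there is nothing further to compare here.
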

The fact that the walk exhibits the Sinai behavior unless \eqref{BP} holds is due to \cite{BG2}.
The CLT for stationary environments satisfying \eqref{BP} was proved in \cite{DG4}.

\begin{theorem}(\cite{DG6})
  \label{ThQLLTstripRec}
  Suppose that \eqref{BP} holds.
Then
$\mathbf{P}$-almost surely the following holds. For each $\eps, R>0$ there exists $n_0=n_0(\omega)$
such that for $n\geq n_0$ uniformly for
$ \DS |k|\leq R\sqrt{n}$
 we have
$$ \left|\frac{\sqrt{2\pi n} Da}{\rho_{(k,i)}} \exp\left[\frac{k^2}{2 D^2 n} \right] \bbP_\omega(\xi_n=(k,i))-1\right|<\eps . $$
\end{theorem}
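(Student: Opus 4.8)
The plan is to obtain Theorem~\ref{ThQLLTstripRec} by adapting the scheme of Sections~\ref{sec4}--\ref{ScQMixEnv} to the recurrent regime. Since a recurrent diffusive walk crosses every layer infinitely often, its hitting times $T_n$ are of order $n^2$ and are not asymptotically Gaussian, so the route through the local limit theorem for hitting times (Theorem~\ref{LLThittingtime}) used for transient walks is unavailable, and one has to analyse the law of the position directly. The plan has three steps: (i) construct a $\bbP_\omega$-harmonic coordinate whose increments between consecutive layers are, by \eqref{BP}, bounded above and bounded away from zero; (ii) prove a quenched local CLT for this coordinate via a Fourier analysis of the environment viewed from the particle (EVFP) and then change variables back to $X_n$; (iii) add the equidistribution of the fibre coordinate $Y_n$. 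Throughout, $\rho_{(k,i)}$ and $a$ are as in \eqref{defrhoi}, with $\rho$ in the recurrent case read as a fixed normalization of the walk's invariant measure (which \eqref{BP} keeps bounded above and below), and $D$ is the diffusion constant from the quenched CLT for $X_N$ under \eqref{BP}; the factor $\rho_{(k,i)}/a$ in the conclusion will appear as the Jacobian of the change of coordinates combined with the fibre weight.

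For (i)--(ii): under \eqref{BP} the potential of the walk is bounded, and one builds $h_\omega\colon\bbS\to\reals$, harmonic for $\bbP_\omega$, with $h_\omega(k+1,\cdot)-h_\omega(k,\cdot)$ bounded above and away from $0$ uniformly in $k$ — the classical potential sum when $m=1$, assembled from the matrices $\zeta_n,A_n$ of Theorem~\ref{ThZeta} in general — so that $h_\omega(k,i)\asymp k$ and $h_\omega(\xi_n)$ is a martingale with bounded increments; it then suffices to prove the LLT for $h_\omega(\xi_n)$. For that I would run a Fourier/transfer-operator argument on the EVFP chain, which is ergodic by \eqref{EqC1}--\eqref{EqC2*}: the conditional variance $\tfrac1n\sum_{j<n}\mathbb{E}_\omega\big[(h_\omega(\xi_{j+1})-h_\omega(\xi_j))^2\mid\xi_j\big]$ is a Birkhoff average of a bounded function of the EVFP state, hence converges $\bbP_\omega$-a.s.\ to a constant $\sigma^2$; the twisted transfer operators of the EVFP chain (transitions weighted by $e^{i\theta(h_\omega(\xi_{j+1})-h_\omega(\xi_j))}$) have a simple leading eigenvalue $1-\tfrac12\sigma^2\theta^2+o(\theta^2)$ near $0$ and are uniform strict contractions for $\theta$ in compact subsets of $(0,2\pi)$, the contraction for $\theta\ne0$ coming from the aperiodicity that \eqref{EqC3} supplies via a uniform holding probability. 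Inserting this into the Fourier inversion $\bbP_\omega(X_n=k)=\tfrac1{2\pi}\int_{-\pi}^{\pi}e^{-ik\theta}\mathbb{E}_\omega(e^{i\theta X_n})\,d\theta$ and converting from $h_\omega$ back to $X$ gives, uniformly for $|k|\le R\sqrt n$,
\[
\bbP_\omega(X_n=k)=\frac{\sum_{i=1}^m\rho_{(k,i)}}{a}\cdot\frac{1}{D\sqrt{2\pi n}}\,\exp\!\Big(-\frac{k^2}{2D^2 n}\Big)+o\big(n^{-1/2}\big),
\]
the prefactor being the left eigenfunction of the limiting multiplier at $\theta=0$, i.e.\ the normalised invariant density at layer $k$. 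Uniformity over $|k|\le R\sqrt n$ for $\mathbf{P}$-a.e.\ $\omega$ is obtained as in Section~\ref{ScOcc}, via moderate-deviation bounds for $\sum_i\rho_{(k,i)}$ and for the error terms followed by Borel--Cantelli along $n$.

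For (iii): conditioning on the last instant before time $n$ at which the walk exited the bounded block $L_{k-1}\cup L_k\cup L_{k+1}$ (and hence re-entered $L_k$), one reduces to the statement that the internal dynamics inside such a block, run for a geometrically distributed number of steps, equilibrates to the probability vector $\nu^{(k)}_\omega(i)=\rho_{(k,i)}/\sum_j\rho_{(k,j)}$; this follows from uniform ellipticity \eqref{EqC2*} and aperiodicity \eqref{EqC3}, and is uniform because the width $m$ is fixed. Hence $\bbP_\omega(\xi_n=(k,i))=\nu^{(k)}_\omega(i)\,\bbP_\omega(X_n=k)\,(1+o(1))$ uniformly for $|k|\le R\sqrt n$, which together with the display above is exactly the assertion.

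The main obstacle is that, along a fixed trajectory, neither the martingale increments of $h_\omega(\xi_n)$ nor the twisted transfer operators form a stationary sequence — stationarity holds only for the EVFP chain — so the whole spectral and ergodic analysis must be carried out there and then transported back with errors controlled uniformly over the window $|k|\le R\sqrt n$. In the recurrent regime this transport is more delicate than in the transient case of Sections~\ref{sec4}--\ref{ScQMixEnv}, because the walk returns to every layer infinitely often: the occupation-time and moderate-deviation estimates of Section~\ref{ScOcc}, derived there under $r(2)<1$, have to be re-established under \eqref{BP}, typically by decomposing a trajectory into excursions away from $L_0$ and using boundedness of the potential to control each excursion.
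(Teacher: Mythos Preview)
The paper does not prove Theorem~\ref{ThQLLTstripRec}; it is quoted from \cite{DG6} and only stated here, so there is no in-paper argument to compare your proposal against.

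Your outline is a reasonable strategy in broad strokes, and steps (i) and (iii) are standard under \eqref{BP} together with \eqref{EqC2*}--\eqref{EqC3}. The real gap is in step (ii). The spectral picture you describe --- a simple leading eigenvalue $1-\tfrac12\sigma^2\theta^2+o(\theta^2)$ for the twisted transfer operator of the EVFP chain, and strict contraction for $\theta\neq 0$ --- is a statement about an operator acting on functions of $(\omega,y)\in\tilde\Omega$; it controls the \emph{annealed} characteristic function $\mathrm{E}\big[e^{i\theta h_\omega(\xi_n)}\big]$, not the quenched one $\mathbb{E}_\omega\big[e^{i\theta h_\omega(\xi_n)}\big]$ for a fixed~$\omega$. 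You then insert this annealed expansion into the quenched Fourier inversion $\mathbb{P}_\omega(X_n=k)=\tfrac{1}{2\pi}\int e^{-ik\theta}\,\mathbb{E}_\omega(e^{i\theta X_n})\,d\theta$, which is not legitimate. What a quenched LLT actually requires is control, for $\mathbf{P}$-a.e.\ $\omega$, of the inhomogeneous product of one-step twisted kernels along that particular environment --- either a quenched spectral gap for such random products, or a direct bound on $\int_{|\theta|>\delta}\big|\mathbb{E}_\omega e^{i\theta h_\omega(\xi_n)}\big|\,d\theta$ --- and you supply neither. Your closing paragraph names exactly this obstacle (``transported back'') but does not overcome it; Borel--Cantelli on moderate deviations of $\sum_i\rho_{(k,i)}$ handles the prefactor, not the missing decay of the quenched characteristic function away from~$0$.

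A side remark: you correctly flag that $\rho_{(k,i)}$ as defined in \eqref{defrhoi} is infinite in the recurrent regime, so the statement implicitly uses a renormalized invariant density (bounded above and below thanks to \eqref{BP}); any proof has to begin by making this precise.
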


\subsection{Environment seen by the particle.}
\label{SSEnvByPart}
Let $\cT$ be the natural shift on the space of environments. Consider the Markov chain $(\omega^{(N)}, Y_N)$, where
$\omega^{(N)}=\cT^{X_N} \omega.$
This Markov chain is called \textit{environment seen by the particle} (ESP)
and it is an effective tool for studying RWRE.

Denote $\tilde{\Omega}=\Omega \times \{1,\dots, m\}$ the phase space of the Markov chain ESP.

\begin{theorem}\cite{DG5}
$(\omega^{(N)}, Y_N)$ admits an invariant measure $\bQ$ which is absolutely continuous
with respect to $\bP$ iff one of the following conditions is satisfied

either (a) the walk is transient and $s>1$ (that is the walk has positive speed);

or (b) \eqref{BP} holds. \newline In case (a) we have that
for every continuous function $\Phi:\tilde{\Omega} \to \reals$
$$\bQ(\Phi)=\frac{1}{a} \; \bE\left(\sum_{y=1}^m \Phi(\omega, y) \rho_{(0, y)} (\omega)\right)$$
where $\rho$ is given by \eqref{defrhoi}.
\end{theorem}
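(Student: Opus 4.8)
The plan is to treat the two implications separately, the bulk of the work being the construction of $\bQ$ in cases (a) and (b). Write $\cL$ for the transition operator of the ESP chain on $\tOmega$: for $\Phi:\tOmega\to\reals$,
\[
(\cL\Phi)(\omega,y)=\sum_{y'=1}^m\Big(P_0(y,y')\,\Phi(\cT\omega,y')+Q_0(y,y')\,\Phi(\cT^{-1}\omega,y')+R_0(y,y')\,\Phi(\omega,y')\Big),
\]
where $P_0,Q_0,R_0$ are the matrices of $\omega$ at layer $0$; here a right step sends the state to $(\cT\omega,y')$, a left step to $(\cT^{-1}\omega,y')$, and a lazy step keeps $\omega$. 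We look for an $\cL$-invariant probability measure on $\tOmega$ absolutely continuous with respect to $\bP\otimes(\text{counting on }\{1,\dots,m\})$, and the candidate is $\bQ(\Phi)=\frac1a\,\bE\big(\sum_{y}\Phi(\omega,y)\,\rho_{(0,y)}(\omega)\big)$ with $a$ as in \eqref{defrhoi}.

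For sufficiency I would first check that $a$ is finite exactly under the stated hypotheses. Under \eqref{BP} the Green's function of the walk is bounded uniformly in the environment, so $\rho_{(0,i)}$ is bounded and $a<\infty$ trivially. In the transient case the total time spent in layer $0$ has the same heavy tail of index $s$ as the hitting times analysed in Section~\ref{ScOcc} and in \cite{DG2}, so its annealed mean $a$ is finite iff $s>1$, i.e. iff the walk has positive speed (Theorem~\ref{ThAnn}(b)); a translation argument using stationarity of $\bP$ under $\cT$ shows $\bE(\sum_i\rho_{(k,i)})$ is the same for every $k$. Next I would verify $\bQ(\cL\Phi)=\bQ(\Phi)$. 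The key input is the Green's-function balance equation for the occupation densities, which holds because the walk jumps only between neighbouring layers:
\[
\rho_{(k,i)}(\omega)=\delta_{z_0,(k,i)}+\sum_{j}\Big(\rho_{(k-1,j)}P_{k-1}(j,i)+\rho_{(k,j)}R_k(j,i)+\rho_{(k+1,j)}Q_{k+1}(j,i)\Big).
\]
Expanding $a\bQ(\cL\Phi)$, changing variables $\omega\mapsto\cT^{-1}\omega$ in the $P_0$-term and $\omega\mapsto\cT\omega$ in the $Q_0$-term (both $\bP$-preserving), and using the translation-covariant (``environment seen from $-\infty$'') form of $\rho$ together with the balance equation, the three contributions recombine into $a\bQ(\Phi)$, the inhomogeneous $\delta_{z_0,(k,i)}$-terms telescoping away; the rearrangement of these infinite sums of translated densities is legitimate precisely because $a<\infty$. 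When $s>2$ one can alternatively read off invariance from the quenched LLT: by Theorem~\ref{ThQLLTstrip}, $\EXP_\omega[\Phi(\cT^{X_N}\omega,Y_N)]=\sum_{k,i}\bbP_\omega(\xi_N=(k,i))\Phi(\cT^k\omega,i)$ is, up to $o(1)$, a Gaussian average over $k$ near $b_N(\omega)$ of width $\sim D\sqrt N$ of the function $\sum_i\rho_{(k,i)}(\omega)\Phi(\cT^k\omega,i)$, which by ergodicity of $\cT$ converges to $\bQ(\Phi)$; since these are the Cesàro means of the ESP chain, the limit is $\cL$-invariant.

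Absolute continuity is built in, with density $\tfrac1a\rho_{(0,\cdot)}$. For uniqueness (which also pins down the formula) I would use that $\cT$ is ergodic because the environment is i.i.d.\ \eqref{EqC1}, while the ellipticity conditions \eqref{EqC2*}, \eqref{EqC3} make the ESP chain irreducible, so an a.c.\ invariant measure is unique. For necessity, suppose neither (a) nor (b) holds. If the walk is recurrent and \eqref{BP} fails, then by \cite{BG2} it obeys the Sinai scaling $X_N\asymp\ln^2 N$; an a.c.\ $\cL$-invariant measure, being ergodic, would give a law of large numbers and CLT for the increments of $X_N$ inconsistent with this scaling, a contradiction. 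If the walk is transient with $s\le1$, then $a=+\infty$ by the heavy-tail estimate above; positive solutions of the adjoint harmonicity equation $h(\omega,y)=\sum_{y'}\big(h(\cT^{-1}\omega,y')P_{-1}(y',y)+h(\cT\omega,y')Q_{1}(y',y)+h(\omega,y')R_{0}(y',y)\big)$ are, in the transient regime, proportional to the occupation density $\rho_{(0,\cdot)}$, which is not $\bP$-integrable, so no finite a.c.\ invariant measure exists.

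I expect the main obstacle to be the invariance check in the range $1<s\le 2$, where the LLT of Theorem~\ref{ThQLLTstrip} is not available (indeed, by Theorem~\ref{ThQLT-ND} the quenched position has no deterministic limit there): one must justify interchanging the infinite sums of translated occupation densities in the balance equation and control the telescoping of the boundary terms, which is exactly where the integrability $a<\infty$---equivalently $s>1$---is consumed. A secondary delicate point is necessity for transient walks with $s\le1$: vanishing speed by itself does not rule out an a.c.\ invariant measure, so the argument must proceed through non-integrability of the Green's function (the ``environment from $-\infty$'' occupation density) rather than through an ergodic-average speed. This is the route taken in \cite{DG5}.
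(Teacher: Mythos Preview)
This theorem is not proved in the present paper; it is quoted from \cite{DG5}, so there is no in-paper proof to compare against. I will therefore only comment on your proposal on its own merits.

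Your treatment of case~(a) follows the standard route and is sound in outline: the candidate density $\rho_{(0,\cdot)}/a$, the balance equation for the occupation measure, and the change of variables under $\cT$ combine to give invariance once $a<\infty$, i.e.\ once $s>1$. The subtlety you flag---that one must use the translation-covariant version of $\rho$ built from the vectors $y_n$ of \eqref{DistrofX0} so that $\rho_{(0,i)}(\cT^k\omega)=\rho_{(k,i)}(\omega)$ and the source term drops out---is exactly right.

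There is, however, a genuine error in your handling of case~(b). Under \eqref{BP} the walk is \emph{recurrent}, so $\rho_{(0,i)}=\EXP_\omega\Card(n:\xi_n=(0,i))=+\infty$ for every $i$, and the formula $\bQ(\Phi)=\tfrac1a\,\bE\big(\sum_y\rho_{(0,y)}\Phi\big)$ is meaningless; indeed the theorem only asserts this formula in case~(a). Condition \eqref{BP} bounds $\|A_n\cdots A_0\|$, not the Green's function. In the recurrent bounded-potential regime the invariant density has to be built by a different mechanism (a suitably normalized stationary vector for the strip walk, cf.\ \cite{DG4,DG5,DG6}), and your proposal does not address this.

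Your necessity arguments are also incomplete. In the Sinai case, existence of an a.c.\ invariant measure yields, via the ergodic theorem applied to the increments $X_{k+1}-X_k$, only $X_N/N\to 0$, which is \emph{consistent} with $X_N\asymp\ln^2 N$; it does not by itself produce a CLT, so there is no contradiction along the lines you sketch. The actual obstruction in \cite{DG5} is that integrability of any positive solution of the adjoint equation forces bounds of the type \eqref{BP}. Likewise, for transient walks with $s\le 1$, your assertion that every positive solution of the adjoint equation is proportional to $\rho_{(0,\cdot)}$ is a Martin-boundary statement that requires proof in the strip setting and is not supplied.
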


\begin{theorem}
\label{ThEPStrip}
(a) Suppose that either the walk is transient and $s>1$ or the walk is recurrent and \eqref{BP} holds.
Then
for every continuous function $\Phi:\tilde{\Omega} \to \reals$
$$ \lim_{N\to\infty} \mathrm{E}(\Phi(\omega^{(N)}, Y_N))=\bQ(\Phi). $$

(b) Suppose that either the walk is transient and $s>2$ or the walk is recurrent and \eqref{BP} holds.
Then for a.e. $\omega$ and for
every continuous function $\Phi:\tilde{\Omega} \to \reals$
\begin{equation}
\label{EqQEnv}
\lim_{N\to\infty} \EXP_\omega(\Phi(\omega^{(N)}, Y_N))=\bQ(\Phi).
\end{equation}
\end{theorem}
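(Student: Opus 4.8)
The plan is to deduce Theorem~\ref{ThEPStrip} from the local limit theorems stated earlier, exactly along the lines that \cite{DG3} used in the simple RWRE case. Part~(b) is the new quenched statement, so I focus on it; part~(a) will then follow by integrating \eqref{EqQEnv} against $\bP$ and using dominated convergence, or more directly from Theorem~\ref{ThAnnLLTs} by the same argument run at the annealed level. The starting point is the identity
\[
\EXP_\omega\bigl(\Phi(\omega^{(N)}, Y_N)\bigr)=\sum_{k\in\integers}\sum_{i=1}^m \Phi(\cT^k\omega, i)\,\bbP_\omega(\xi_N=(k,i)),
\]
so the whole question is about replacing $\bbP_\omega(\xi_N=(k,i))$ by a tractable expression and then recognizing a Birkhoff-type average that converges to $\bQ(\Phi)$.

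First I would fix $\eps>0$ and, using the quenched CLT for $X_N$ (Theorem~\ref{LmTCLT}) together with the backtracking estimates (Lemma~\ref{LmBack}) and moderate-deviation bounds on occupation times (the material of Section~\ref{ScOcc}), restrict the sum to the diffusive window $|k-b_N(\omega)|\le R\sqrt N$ at the cost of an error that is $<\eps$ uniformly once $R$ is large and $N\ge n_0(\omega)$; the contribution of the complementary range of $k$ is negligible because $\bbP_\omega(\xi_N=(k,i))$ has a uniformly summable Gaussian-type tail there. On the retained window I would invoke the quenched LLT (Theorem~\ref{ThQLLTstrip}) to substitute
\[
\bbP_\omega(\xi_N=(k,i))=\frac{\rho_{(k,i)}(\omega)}{Da\sqrt{2\pi N}}\exp\!\left[-\frac{(k-b_N(\omega))^2}{2D^2 N}\right](1+o(1)),
\]
the $o(1)$ being uniform in $k$ over the window. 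This turns the sum into
\[
\frac{1}{Da\sqrt{2\pi N}}\sum_{|k-b_N|\le R\sqrt N}\;\sum_{i=1}^m \Phi(\cT^k\omega,i)\,\rho_{(k,i)}(\omega)\,\exp\!\left[-\frac{(k-b_N)^2}{2D^2 N}\right]+o(1).
\]

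The remaining task is to evaluate this weighted average. Writing $\Psi(\cT^k\omega):=\sum_{i=1}^m\Phi(\cT^k\omega,i)\rho_{(k,i)}(\omega)=\sum_{i=1}^m\Phi(\cT^k\omega,i)\rho_{(0,i)}(\cT^k\omega)$, we recognize $\Psi$ as a fixed $\bP$-integrable function on $\Omega$ evaluated along the orbit $\cT^k\omega$, with $\bE(\Psi)=a\,\bQ(\Phi)$ by the formula for $\bQ$ in Theorem~\ref{DG5}. The Gaussian weights, after the change of variable $k=b_N(\omega)+x\sqrt N$, form a Riemann sum approximating $\int \frac{1}{D}e^{-x^2/2D^2}\,dx=\sqrt{2\pi}$ once renormalized; so the prefactor $\tfrac{1}{Da\sqrt{2\pi N}}\sqrt N$ times a $\sqrt N$-length block average tends, by the pointwise ergodic theorem applied to $\Psi$ under $\cT$, to $\tfrac{1}{a}\bE(\Psi)=\bQ(\Phi)$. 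Making this rigorous requires a standard interchange: I would split the window into $O(1/\delta)$ sub-blocks of length $\delta\sqrt N$ on each of which the Gaussian weight is nearly constant, apply the ergodic theorem to each sub-block average of $\Psi$ (valid for a.e.\ $\omega$ simultaneously for a countable dense set of block endpoints, hence after a limiting argument for all of them), sum up, and finally let $\delta\to0$ and $R\to\infty$.

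The main obstacle, as in \cite{DG3}, is the uniformity of the LLT over the whole $R\sqrt N$-window combined with the need to control the $k$-sum outside it — i.e.\ ensuring the error terms are genuinely uniform in $k$ and summable, so that no mass escapes when we pass to the Riemann-sum limit. This is precisely what Theorems~\ref{LLThittingtime} and \ref{ThQLLTstrip} (together with the occupation-time bounds of Section~\ref{ScOcc} that feed their proofs) are designed to provide, so once those are in hand the ergodic-theorem packaging above is routine. A secondary technical point is that $\Psi$ involves $\rho_{(0,\cdot)}$, which is only $\bP$-integrable (not bounded) in general; here one uses the occupation-time moment bounds — $r(2)<1$ guarantees enough integrability of $\sum_i\rho_{(0,i)}$ — to justify both the dominated-convergence step in part~(a) and the truncation of large values of $\Psi$ needed before applying the ergodic theorem in part~(b). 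For the recurrent alternative in both parts one argues identically, replacing Theorem~\ref{ThQLLTstrip} by Theorem~\ref{ThQLLTstripRec} and $b_N(\omega)$ by $0$.
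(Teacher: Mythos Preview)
Your overall architecture matches the paper's: restrict to the diffusive window via the quenched CLT, replace $\Prob_\omega(\xi_N=(k,i))$ by its LLT approximation, freeze the Gaussian weight on short sub-blocks, and recognise a Riemann sum. The gap is the step ``apply the ergodic theorem to each sub-block average of $\Psi$''. Birkhoff's theorem gives $S_n:=\sum_{k<n}\Psi(\cT^k\omega)=n\,\bE\Psi+o(n)$ with no rate. A sub-block average over an interval of length $c-a\sim\delta\sqrt N$ sitting at position $a\sim b_N\sim N$ equals $(S_c-S_a)/(c-a)=\bE\Psi+o(N)/\sqrt N$, and $o(N)/\sqrt N$ is not $o(1)$. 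In other words, almost-sure convergence of moving averages over windows of length $\sqrt N$ located near $N$ does \emph{not} follow from the ergodic theorem, and the countable-endpoints trick cannot repair this because it is the window \emph{length}, not its location, that is too short. Since $\Psi$ has only a finite $(2{+}2\delta)$-th moment (the tail of $\rho_{(0,i)}$ is of power $s$), truncating $\Psi$ to a bounded function does not by itself supply the missing rate either.

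The paper fills this gap with the quantitative maximal inequality of Lemma~\ref{lem2}(b): $\left\|\mathfrak H^{*}(n)\right\|_{2+2\delta}\le Cn^{1/2+\eps}$ for the centred partial sums of the occupation-weighted observables $\fa_j$. Via Markov's inequality this yields $\bP\bigl(|\text{block average}-\fa|>\eps\bigr)\le C\eps^{-(2+2\delta)}L^{-(1+\delta')}$ for a block of length $L$. One then chooses a polynomial subsequence $N_l=l^{2-\delta_1}$ and blocks $I_{l,j}$ of length $N_l^{1/2-\delta_2}$, so that the double sum over $l$ and over the $O(N_l^{2\delta_2})$ blocks in each window is finite; Borel--Cantelli gives \eqref{EnvErg} almost surely for all blocks simultaneously, and one bridges from $N_l$ to general $N$ using $N_{l+1}-N_l=o(\sqrt{N_l})$. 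If you replace your ergodic-theorem step by this argument your proof goes through; note that Lemma~\ref{lem2}(b) is stated and proved precisely for this purpose, not merely as an input to the LLT. (A minor side remark: deducing part~(a) from part~(b) by integration covers only $s>2$; for $1<s\le 2$ part~(a) is a separate, already-known result.)
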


For recurrent RWRE both statements are due to \cite{DG6}. In the transient case, the result was
first proven for RWRE on $\integers$ in \cite{K-Ren} (part (a)) and \cite{L} (part (b)).
For the RWRE on the strip, part (a) is due to \cite{R}. Part (b) is new and will be proven in
Section \ref{ScQMixEnv} following the approach of \cite{DG3}.

\section{Preliminaries.}\label{sec4}
In the rest of the paper we give the proof of the local limit theorem for transient diffusive RWRE.
Therefore unless it is explicitly stated otherwise, we suppose that the walk is transient to the right
and that $s>2.$

\subsection{A Local limit theorem for independent summands.}
The following result from \cite{DMD} provides very general sufficient conditions
under which the Local Limit Theorem for sums of independent integer valued random variables holds.
\begin{theorem}(\cite{DMD})
\label{Thllt} Let $\eta_i,\ i\ge1,$ be independent integer valued random variables and let
$d_i=\sum_{j}\min[P(\eta_i=j),P(\eta_i=j+1)],$ $\mathfrak{d}_n=\sum_{i=1}^nd_i$. Denote
$\Xi_n=\sum_{i=1}^n\eta_i$. Suppose that there are numbers $\mathfrak{c}_n>0,$
$\mathfrak{h}_n$, $n\ge 1$, such that, as $n\to\infty$,  $\mathfrak{c}_n\to\infty$, $\limsup \mathfrak{c}_n^2/\mathfrak{d}_n<\infty$,
and $(\Xi_n-\mathfrak{h}_n)/\mathfrak{c}_n$ is asymptotically normal $\mathcal{N}(0,1)$. Then
\begin{equation}\label{llt}
\sup_{k}\left|\mathfrak{c}_n P\left(\Xi_n=k\right)-\frac{1}{\sqrt{2\pi}}
\exp\left(-\frac{(k-\mathfrak{h}_n)^2}{2\mathfrak{c}_n^2}\right)\right|\to 0 \text{ as }n\to\infty.
\end{equation}
\end{theorem}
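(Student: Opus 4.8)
The plan is to prove \eqref{llt} by the classical Fourier--inversion method, reducing it to an $L^1$ estimate on characteristic functions; the one genuinely new ingredient will be a bound that converts the overlap quantity $d_i$ into exponential decay of $|\phi_{\eta_i}|$, where $\phi_{\eta_i}(\theta)=\EXP(e^{i\theta\eta_i})$, $\theta\in[-\pi,\pi]$. First I would record the Fourier reduction. Since $\Xi_n$ is integer valued,
$$P(\Xi_n=k)=\frac{1}{2\pi}\int_{-\pi}^{\pi}e^{-ik\theta}\,\Phi_n(\theta)\,d\theta,\qquad \Phi_n(\theta):=\prod_{i=1}^{n}\phi_{\eta_i}(\theta),$$
while $\tfrac{1}{\sqrt{2\pi}}e^{-(k-\mathfrak{h}_n)^2/(2\mathfrak{c}_n^2)}=\tfrac{\mathfrak{c}_n}{2\pi}\int_{\reals}e^{-i(k-\mathfrak{h}_n)\theta}e^{-\mathfrak{c}_n^2\theta^2/2}\,d\theta$. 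Writing $\Phi_n(\theta)=e^{i\mathfrak{h}_n\theta}\Psi_n(\theta)$ with $\Psi_n$ the characteristic function of $\Xi_n-\mathfrak{h}_n$, substituting $\theta=u/\mathfrak{c}_n$, and estimating the resulting oscillatory integrals by absolute value, one gets, uniformly in $k$,
$$\Bigl|\mathfrak{c}_nP(\Xi_n=k)-\tfrac{1}{\sqrt{2\pi}}e^{-(k-\mathfrak{h}_n)^2/(2\mathfrak{c}_n^2)}\Bigr|\le\frac{1}{2\pi}\int_{-\pi\mathfrak{c}_n}^{\pi\mathfrak{c}_n}\bigl|\Psi_n(u/\mathfrak{c}_n)-e^{-u^2/2}\bigr|\,du+\frac{1}{2\pi}\int_{|u|>\pi\mathfrak{c}_n}e^{-u^2/2}\,du.$$
The last term tends to $0$ because $\mathfrak{c}_n\to\infty$, so everything comes down to the first integral.

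The key lemma I would establish is
$$|\phi_{\eta_i}(\theta)|\le 1-\tfrac12 d_i\sin^2(\theta/2)\le\exp\!\bigl(-\tfrac12 d_i\sin^2(\theta/2)\bigr),\qquad\theta\in[-\pi,\pi].$$
To prove it, let $\mu_i$ be the law of $\eta_i$, put $m_j:=\min(P(\eta_i=j),P(\eta_i=j+1))$ so that $\sum_j m_j=d_i$, and set $\nu_i:=d_i^{-1}\sum_j m_j\delta_j$ and $U:=\tfrac12(\delta_0+\delta_1)$. Then $d_i(\nu_i*U)(j)=\tfrac12(m_j+m_{j-1})\le\tfrac12\bigl(P(\eta_i=j)+P(\eta_i=j)\bigr)=P(\eta_i=j)$, so $\widehat\mu_i:=(1-d_i)^{-1}(\mu_i-d_i\,\nu_i*U)$ is a probability measure and $\mu_i=d_i(\nu_i*U)+(1-d_i)\widehat\mu_i$. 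Passing to characteristic functions and using $|\widehat U(\theta)|=|\cos(\theta/2)|$ together with $1-|\cos(\theta/2)|\ge\tfrac12\sin^2(\theta/2)$ gives the claim. Multiplying over $i$ yields $|\Psi_n(\theta)|=|\Phi_n(\theta)|\le\exp(-\tfrac12\mathfrak{d}_n\sin^2(\theta/2))$; combined with $\sin^2(\theta/2)\ge\theta^2/\pi^2$ on $[-\pi,\pi]$ and the hypothesis $\mathfrak{d}_n\ge c_0\mathfrak{c}_n^2$ for large $n$ (which is exactly $\limsup\mathfrak{c}_n^2/\mathfrak{d}_n<\infty$), this gives the uniform domination $|\Psi_n(u/\mathfrak{c}_n)|\le e^{-cu^2}$ for all $|u|\le\pi\mathfrak{c}_n$ and all large $n$, with $c=c_0/(2\pi^2)$.

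Finally I would assemble the pieces in the standard way: the asymptotic normality hypothesis gives $\Psi_n(u/\mathfrak{c}_n)\to e^{-u^2/2}$ for each fixed $u$; splitting $\int_{-\pi\mathfrak{c}_n}^{\pi\mathfrak{c}_n}=\int_{|u|\le A}+\int_{A<|u|\le\pi\mathfrak{c}_n}$, the first integral tends to $0$ by dominated convergence with dominant $e^{-cu^2}+e^{-u^2/2}$, and the second is at most $\int_{|u|>A}(e^{-cu^2}+e^{-u^2/2})\,du$, which is as small as desired once $A$ is large, uniformly in $n$. Letting $n\to\infty$ and then $A\to\infty$ finishes the proof.

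The main obstacle, and the only step that is not routine Fourier analysis, is the characteristic-function lemma: one must recognize that $d_i=1-\tfrac12\sum_j|P(\eta_i=j)-P(\eta_i=j-1)|$ is precisely the overlap of $\mathrm{law}(\eta_i)$ with its unit shift, and exploit this to peel off from $\mu_i$ a smoothing factor $U$ whose characteristic function $\cos(\theta/2)$ stays bounded away from $1$ away from $\theta=0$. A cruder attempt — bounding $1-|\phi_{\eta_i}(\theta)|$ below by a multiple of $d_i(1-\cos\theta)$ — is doomed (take $\eta_i$ uniform on a long interval, where $d_i\approx1$ but $|\phi_{\eta_i}|$ decays only like $1/(\text{length}\cdot|\theta|)$), so the product structure and the per-summand gain $\sin^2(\theta/2)$ are both essential.
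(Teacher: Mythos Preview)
The paper does not prove this theorem at all; it is quoted verbatim from Davis--McDonald \cite{DMD} and used as a black box in Section~\ref{ScLLTHit}. Your argument is correct and is in fact essentially the proof in \cite{DMD}: Fourier inversion on $[-\pi,\pi]$, the mixture decomposition $\mu_i=d_i(\nu_i*U)+(1-d_i)\widehat\mu_i$ that peels off the smoothing factor $|\cos(\theta/2)|$, the resulting product bound $|\Phi_n(\theta)|\le\exp\!\bigl(-\tfrac12\mathfrak{d}_n\sin^2(\theta/2)\bigr)$, and dominated convergence after the rescaling $\theta=u/\mathfrak{c}_n$.

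One minor comment: your closing paragraph is muddled. The ``cruder attempt'' you warn against, namely $1-|\phi_{\eta_i}(\theta)|\ge C\,d_i(1-\cos\theta)$, is (since $1-\cos\theta=2\sin^2(\theta/2)$) exactly the inequality you have just established, and in your uniform-on-$\{0,\dots,N-1\}$ example the bound holds with plenty of room: there $d_i\approx 1$ while $|\phi_{\eta_i}(\theta)|=O(1/(N|\sin(\theta/2)|))\ll 1-\tfrac12\sin^2(\theta/2)$. This is only commentary, though, and does not affect the validity of the proof.
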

\begin{remark} The requirement $\mathfrak{d}_n\to\infty$ implies that sufficiently many $d_i$'s
are positive. Had this not been the case, then it could happen that $\Xi_n$ would be taking,
say, only even values as $n$ becomes large. In our applications the  role of $\Xi_n$ is played by
$T_n$ and all the corresponding $d_i$'s are uniformly separated from $0$.
\end{remark}

\ignore{
Another setting where LLT holds is the following. Consider a sequence of Markov chains $\{\cZ_n(t)\}$
and choose
a site $z_n$ for each chain. Denone
$$ p_{1,n}=\bP(\cZ_n(t+1)=z_n|\cZ_n(t)=z_n), $$
$$ p_{2,n}=\bP(\exists s\in \naturals, \cZ_n(t+s)=z_n\text{ but }\cZ_n(t+1)\neq z_n |\cZ_n(t)=z_n), $$
$$ p_{3,n}=\bP(\forall s\in \naturals, \cZ_n(t+s)\neq z_n |\cZ_n(t)=z_n), $$
$$ q_n=\bP(z_n \text{ is visited}).$$
Let $\fl_n=\sum_{t=0}^\infty 1_{Z(t)=Z(t+1)=z_n}$ be the total number of times the walker spends two
consecutive moments of time at $z_n.$

\begin{lemma}
Suppose that there is a constant $\eta$ such that $p_{1,n}\geq \eta$ for all n and that
$$ \lim_{n\to\infty} p_{3,n}=0\quad \text{and}\quad \lim_{n\to\infty} q_n=1. $$

Then (a) $\fl_n \frac{p_{3,n}}{p_{3,n}+p_{1,n}}$ converge in law as $n\to\infty$ to standard exponential
random variable. Moreover the following local limit theorem holds: if
$\frac{p_{3,n}}{p_{3,n}+p_{1,n}}k_n\to u$ then
$$  \frac{p_{3,n}}{p_{3,n}+p_{1,n}} \bP(T_n=k_n)\to e^{-u}. $$

(b) For each $K$ there is $C$ such that for any pair $(k_{1,n}, k_{2,n})$
such that \\ $p_{3,n}|k_{1,n}-k_{2,n}|\leq K$ we have
$\frac{1}{C}\leq  \frac{\bP(\fl_n=k_{1,n})}{\bP(T_n=k_{2,n})}\leq C .$
Moreover for each $\eps$ there is $\kappa$
such that if $p_{3,n}|k_{1,n}-k_{2,n}|\leq \kappa $ then
$1-\eps\leq  \frac{\bP(\fl_n=k_{1,n})}{\bP(T_n=k_{2,n})}\leq 1+\eps.$
\end{lemma}

\begin{proof}
Both statements follow from the fact that \\
$\DS  \bP(\fl _n=k)=q_n \left(\frac{p_{1,n}}{p_{1,n}+p_{3,n}}\right)^k \frac{p_{3,n}}{p_{1,n}+p_{3,n}}. $
\end{proof}
}

\subsection{Occupation times.}

In the proofs of Theorems \ref{ThQLLTstrip}--\ref{ThEPStrip}  we assume that $s<\infty.$
Proofs become easier if $s=\infty$ and we leave the corresponding modifications
to the reader.

\begin{lemma}
\label{LmTail}
(a) There is a constant $C$ such that for each $(k,i)\in \bbS$
$$\bP\left(\rho_{(k,i)} >t\right)\leq C t^{-s}. $$
(b) For any $\hu>\frac{1}{s}$ for almost every $\omega$ there is a constant $C(\omega)$ such that
for each $(k,i)\in \bbS$
$$\rho_{(k,i)}<C(\omega) k^\hu.$$
\end{lemma}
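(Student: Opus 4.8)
The plan is to prove the single-site tail estimate (a) first and then to read off the almost-sure bound (b) from it by a Borel--Cantelli argument. For (a) I would work with the representation of expected occupation times through the matrices $A_n$ of \eqref{DefA}. Writing $N_{(k,i)}=\Card(n:\xi_n=(k,i))$, so that $\rho_{(k,i)}=\EXP_\omega N_{(k,i)}$, the vectors of expected occupation times of consecutive layers satisfy the balance relations obtained by equating, at each site, the expected number of visits with the expected number of incoming jumps. Solving these relations exactly as in the construction of the $\zeta_n$ and $A_n$ in \cite{BG1, G1}, one gets a constant $C$, depending only on the ellipticity constants in \eqref{EqC2*}, such that $\rho_{(k,i)}\le C\|\cR_k\|$, where $\cR_k$ is a perpetuity-type sequence satisfying a random affine recursion $\cR_k=A_k\cR_{k-1}+\bc_k$ with $\{(A_k,\bc_k)\}$ i.i.d. and $\sup_k\|\bc_k\|<\infty$ (again by \eqref{EqC2*}); the recursion converges because $\blambda<0$ makes $\|A_kA_{k-1}\cdots A_{k-j}\|\to0$ exponentially fast almost surely.

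The next step is the tail of $\cR_k$. By \eqref{EqC2*} and \eqref{DefA} the entries of $A_n$ are bounded below by a fixed positive constant, so the stationary solution $\cR$ of the recursion lies within the scope of the Kesten-type renewal theorem for products of uniformly positive random matrices (the same mechanism as for RWRE on $\integers$ in \cite{KKS} and on the strip in \cite{DG2}). That theorem yields $\bP(\|\cR\|>t)\le Ct^{-\alpha}$ with $\alpha$ the unique positive number for which $\lim_n(\mathbf E\|A_n\cdots A_1\|^{\alpha})^{1/n}=1$, i.e. $r(\alpha)=1$; by the definition of $s$ through \eqref{r2} this is precisely $\alpha=s$. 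Since $\rho_{(k,i)}$ is, for every $k$, stochastically dominated by $C\|\cR\|$ (and is even smaller when $k\le0$, the walk starting from $L_0$ and being transient to the right), this gives (a) with a constant $C$ independent of $(k,i)$.

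For (b), fix $\hu>1/s$, so that $\hu s>1$. By (a) we have $\bP(\rho_{(k,i)}>k^{\hu})\le Ck^{-\hu s}$ for every $k\ge1$ and $1\le i\le m$, whence $\sum_{k\ge1}\sum_{i=1}^m\bP(\rho_{(k,i)}>k^{\hu})\le mC\sum_{k\ge1}k^{-\hu s}<\infty$; by the first Borel--Cantelli lemma, for $\mathbf P$-a.e. $\omega$ only finitely many of the events $\{\rho_{(k,i)}>k^{\hu}\}$ occur, so there is $K(\omega)<\infty$ with $\rho_{(k,i)}\le k^{\hu}$ whenever $k\ge K(\omega)$. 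Since (a) also forces each $\rho_{(k,i)}$ to be finite a.s., the remaining finitely many values ($1\le k<K(\omega)$, $1\le i\le m$) are bounded by some $C_1(\omega)$, and $C(\omega)=1+C_1(\omega)$ does the job for all $k\ge1$; replacing $k^{\hu}$ by $(1+|k|)^{\hu}$ handles $k\le0$ in the same way. The one genuinely delicate point is the identification of the \emph{exact} exponent $s$ in (a): a plain Chebyshev bound built on the finiteness of $\mathbf E\rho_{(k,i)}^{\alpha}$ for $\alpha<s$ only produces $t^{-\alpha}$ with $\alpha<s$, which is not enough to make the series in (b) converge once $\hu$ is close to $1/s$, so one is forced to invoke the renewal theorem for random matrix products --- whose hypotheses hold here thanks to the uniform positivity of the $A_n$ together with the relation $r(s)=1$.
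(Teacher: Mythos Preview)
Your proposal is correct and follows the same route as the paper: part (a) is exactly the content of \cite[Lemma 3.3]{DG2}, whose proof proceeds via the random affine recursion and Kesten's renewal theorem for positive matrix products that you outline, and part (b) is precisely the Borel--Cantelli argument the paper invokes. Your remark that the exact exponent $s$ (rather than any $\alpha<s$) is genuinely needed for (b) when $\hu$ is close to $1/s$ is well taken and is indeed why the renewal theorem, not just a moment bound, is required.
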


\begin{remark}
In the case $s=\infty$ the statements of Lemma \ref{LmTail} read as follows.
For any $u, \hu>0$ we have that
$ \bP(\rho_{(k,i)}>t)\leq C t^{-u} \text{ and } \rho_{(k,i)}<C(\omega) k^{\hu}.$
\end{remark}
\begin{proof}
Part (a) follows from \cite[Lemma 3.3]{DG2}.
Part (b) follows from part (a) and the Borel-Cantelli Lemma.
\end{proof}
\subsection{Backtracking}
\begin{lemma}
\cite[Lemma 3.2]{DG2}
\label{LmBack}
Suppose that the walk is transient to the right. Then

\noindent
(a) There exists $C>0$ and $\theta<1$ such that
$$\mathrm{P}(X \text{ visits } k \text{ after } k+m)\leq C \theta^m. $$
(b) Accordingly, for almost every $\omega$ there is a constant $K(\omega)$ such that
$$\mathbb{P}_\omega(\exists k<n: X \text{ visits } k \text{ after } T_{k+\ln^2 n})\leq K(\omega) n^{-100}. $$
\end{lemma}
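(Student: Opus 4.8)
The plan is to prove part (a) first by a renewal/regeneration argument combined with the transience assumption, and then derive part (b) from (a) via a union bound and Borel--Cantelli. For part (a), I would fix $k$ and consider the event $E_m$ that the walk, having reached some site of layer $k$, later visits a site of layer $k$ after first having been in layer $k+m$. The key observation is that in order for this to happen the walk must, after reaching layer $k+m$, return to layer $k$, i.e. it must backtrack across $m$ layers. Since the walk is transient to the right ($\blambda<0$), the probability of ever returning from layer $k+m$ back to layer $k$ decays geometrically in $m$. Concretely, I would use the matrices $A_n=(I-Q_n\zeta_{n-1}-R_n)^{-1}Q_n$ introduced in \eqref{DefA}: the entries of $A_{k+m}\cdots A_{k+1}$ control the quenched probability of moving from layer $k+m$ down to layer $k$ before escaping to $+\infty$, and by the Multiplicative Ergodic Theorem together with $\blambda<0$ (equivalently $r'(0)<0$, so $r(\alpha)<1$ for small $\alpha>0$) one gets $\mathbf{E}\|A_{k+m}\cdots A_{k+1}\|^\alpha\le C\theta_0^{m}$ for some $\theta_0<1$ and small $\alpha>0$. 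Averaging over the environment and using \eqref{EqC2*} to bound the finitely many ``local'' transition probabilities, one obtains $\mathrm{P}(X\text{ visits }k\text{ after }k+m)\le C\theta^m$ with $\theta=\theta_0^{\alpha}<1$; the starting layer $0$ plays no special role since the i.i.d. structure \eqref{EqC1} makes the bound uniform in $k$.

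For part (b), I would take the annealed bound from (a) with $m=\ln^2 n$, so that $\mathrm{P}(X\text{ visits }k\text{ after }k+\ln^2 n)\le C\theta^{\ln^2 n}$, which is $o(n^{-q})$ for every $q$. Summing over $k<n$ gives $\mathrm{P}(\exists k<n:\ X\text{ visits }k\text{ after }T_{k+\ln^2 n})\le C n\,\theta^{\ln^2 n}$. To pass from this annealed statement to a quenched one valid for a.e.\ $\omega$, I would apply the bound along a subsequence $n=2^j$ (or any polynomially separated sequence): since $\sum_j C\,2^{j}\theta^{\ln^2 2^j}<\infty$, Borel--Cantelli gives that for a.e.\ $\omega$ there is $j_0(\omega)$ with $\mathbb{P}_\omega(\exists k<2^j:\dots)\le 2^{-j}$ for $j\ge j_0$, and then an elementary monotonicity/interpolation argument between consecutive $2^j$ (the relevant event is increasing in $n$, up to replacing $\ln^2 n$ by $\ln^2(n/2)$, which only changes constants) upgrades this to $\mathbb{P}_\omega(\exists k<n:\dots)\le K(\omega)n^{-100}$ for all $n$, after enlarging $K(\omega)$ to absorb the small $n$.

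The main obstacle I anticipate is the first step: making precise the claim that transience to the right forces geometric decay of the backtracking probability \emph{uniformly in the environment} in the annealed sense, i.e.\ controlling $\mathbf{E}\|A_{k+m}\cdots A_{k+1}\|^\alpha$. This is exactly where Lemma \ref{Mainlemma} and the moment Lyapunov exponent $r(\alpha)$ enter: one needs $r(\alpha)<1$ for some $\alpha>0$, which follows from $r(0)=1$, $r'(0)=\blambda<0$ and continuity of $r$. One also must be careful that ``visiting layer $k$ after layer $k+m$'' is genuinely controlled by the product of $A$'s and not by some other quantity — this requires recalling from \cite{BG1, G1} the probabilistic interpretation of $\zeta_n$ and $A_n$ as describing the walk conditioned on its large-scale behaviour. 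Since this lemma is quoted from \cite[Lemma 3.2]{DG2}, the cleanest route in the paper is simply to cite that source; the sketch above indicates the mechanism behind it.
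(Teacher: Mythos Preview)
The paper does not give its own proof of this lemma; it is stated with the citation \cite[Lemma 3.2]{DG2} and used as a black box. You recognize this at the end of your proposal, and your sketch of the mechanism behind the cited result is essentially correct and matches the approach in \cite{DG2}: part (a) rests on $r'(0)=\blambda<0\Rightarrow r(\alpha)<1$ for small $\alpha>0$ together with the probabilistic interpretation of $\|A_{k+m}\cdots A_{k+1}\|$ as a bound on quenched backtracking, and part (b) is a union bound plus Borel--Cantelli.

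Two small corrections. In part (a), the passage from $\mathbf{E}\|A_{k+m}\cdots A_{k+1}\|^{\alpha}\le C\theta_0^{m}$ to the annealed bound uses that the quenched backtracking probability $p_\omega$ satisfies $p_\omega\le\min(1,C\|A_{k+m}\cdots A_{k+1}\|)\le C^\alpha\|A_{k+m}\cdots A_{k+1}\|^\alpha$; taking $\mathbf{E}$ gives $\mathrm{P}(\cdot)\le C'\theta_0^{m}$, so the final rate is $\theta=\theta_0$, not $\theta_0^{\alpha}$. In part (b), the subsequence/interpolation is unnecessary: from $\mathrm{P}(E_n)\le Cn\,\theta^{\ln^2 n}$ one gets, by Markov, $\mathbf{P}\big(\mathbb{P}_\omega(E_n)>n^{-100}\big)\le Cn^{101}\theta^{\ln^2 n}$, which is summable in $n$ since $\ln^2 n$ eventually dominates any polynomial; Borel--Cantelli then gives $\mathbb{P}_\omega(E_n)\le n^{-100}$ for all large $n$ directly, and $K(\omega)$ absorbs the finitely many exceptions.
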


\ignore{
\begin{lemma} (\cite{G},Lemma 5)
\label{LmETkFluct}
There exists $\eps_0>0$ such that almost surely
$$ \lim_{n\to\infty} \frac{1}{\sqrt{n}} \max_{l\leq n^{\frac{1+\eps_0}{2}}}
\left| \mathbb{E}_\omega(T_{n+l}-T_n-la)\right|= 0. $$
\end{lemma}}

\section{Occupation times of large segments.}
\label{ScOcc}
Let $T(n\,|\,\omega,(k,j))$ be the hitting time of layer $n>k$ by the walk starting
from $(k,j)\in L_k$. Denote by $e_{k,n}$ the vector whose components are the expectations of
these hitting times:
\begin{equation}\label{4.12}
e_{k,n}(j)=\mathbb{E}_\omega
T(n\,|\,\omega,(k,j))
\end{equation}
\ignore{
\begin{equation}\label{4.12}
e(k)\de\left(
\begin{array}
[c]{l}%
e_k(1)\\
\vdots \\
e_k(m)\\%
\end{array}
\right), \ \ \hbox{where}\ \
e_k(j)=\mathbb{E}_\omega
T(n\,|\,\omega,(k,j))
\end{equation}}
As has been shown in \cite[formula (4.28)]{G1}
\begin{equation}\label{4.20}
e_{k,n} =\sum_{j=k}^{n-1}\zeta_{k}\ldots\zeta_{j}
\left(\sum_{i=0}^{\infty}H_j^{i}U_{j-i}\mathbf{1}\right).
\end{equation}
Here $U_j\de (I-Q_j\zeta_{j-1}-R_j)^{-1}$, $\mathbf{1}$ is a column vector whose
components are all equal to 1, we assume that $\zeta_{k}\ldots\zeta_{k}=I$,
\begin{equation}\label{notn}
H_j^i\de A_j\ldots A_{j-i+1},\ \hbox{ with the convention that }\
H_j^0=I,\ \ H_j^1=A_j
\end{equation}
and the matrices $A_n$ are defined by \eqref{DefA}.
Let $y_n$ be a sequence of $m$-dimensional probability vectors such that
$y_{n}=y_{n-1}\zeta_n$ for all $n\in\mathbb{Z}$. There is a unique sequence
satisfying these equations (Lemma 1 in \cite{G1}). The probabilistic meaning of
$y_0=(y_0(1),...,y_0(m))$ is the distribution of the starting point of the RW:
\begin{equation}\label{DistrofX0}
\mathbb{P}_{\omega}\{X(0)=(0,i)\}=y_0(i)
\end{equation}
(or, equivalently, the distribution
of the points in $L_0$ hit by the RW starting at $-\infty$).
In particular $T_n\de T(n\,|\,\omega,(0,\cdot))$ is the hitting time of $L_n$
by the walk starting from a random initial site in $L_0$ with the distribution of this site given by \eqref{DistrofX0}.

Let $\tau_j$ be the time the walk takes to reach $L_{j+1}$ after having reached $L_j$. Then
\begin{equation}\label{expect}
a_j\de\mathbb{E}_{\omega }(\tau_j)=y_j
\left(\sum_{i=0}^{\infty}H_j^{i}U_{j-i}\mathbf{1}\right)\ \text{ and we set }\ a\de \mathbf{E}(\mathbb{E}_{\omega }(\tau_0)).
\end{equation}

\begin{remark} The first relation in \eqref{expect} can be derived from
the identity $\tau_j={T}(j+1\,|\,\omega,z)-{T}(j\,|\,\omega,z)$ and \eqref{4.20}.
\end{remark}
\begin{remark}
\label{RmMeaningA}
It is easy to see that $\mathbf{E}(\mathbb{E}_{\omega }(\tau_0))=\mathbf{E}\left(\sum_{i=1}^m \rho_{(k,i)}\right)$
and therefore the definitions of $a$ given in \eqref{defrhoi} and \eqref{expect} are consistent. It is natural
and convenient to use
the two different interpretations of $a$ in our analysis.
\end{remark}

\begin{lemma}
\label{LmETkFluct1}
There exists $\eps_0>0$ such that almost surely
$$ \lim_{n\to\infty} \frac{1}{\sqrt{n}} \max_{|l|\leq n^{\frac{1+\eps_0}{2}}}
\left| \mathbb{E}_\omega(T_{n+l}-T_n-la)\right|= 0. $$
\end{lemma}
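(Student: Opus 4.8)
The plan is to rewrite the quantity in terms of partial sums of the stationary sequence $\{a_j\}$ and then to prove an almost sure modulus‑of‑continuity estimate for those partial sums over windows of length $n^{\beta}$ with $\beta<1$. First I would use the telescoping $\tau_j=T(j+1\,|\,\omega,z)-T(j\,|\,\omega,z)$ together with the first identity in \eqref{expect}: for every $n$ and every integer $l$ of either sign (with $n+l\ge 0$, which holds automatically once $n$ is large) one gets $\EXP_\omega(T_{n+l}-T_n)=(W_{n+l}-W_n)+la$, where $W_N\de\sum_{j=0}^{N-1}(a_j-a)$, extended to all integers by $W_0=0$. Thus $\EXP_\omega(T_{n+l}-T_n-la)=W_{n+l}-W_n$, and it suffices to show that for some $\eps_0\in(0,1)$, with $\beta=(1+\eps_0)/2$,
$$\frac1{\sqrt n}\max_{|l|\le n^{\beta}}\bigl|W_{n+l}-W_n\bigr|\to 0\qquad\text{a.s.}$$
The typical size of such an increment is $\asymp n^{\beta/2}=o(\sqrt n)$, so the real issue is the uniform almost sure control.

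Next I would collect the three facts about $\{a_j\}_{j\in\mathbb Z}$ that the argument needs. (i) \emph{Stationarity with mean $a$}: under the shift $\cT$ one has $\zeta_j(\cT\omega)=\zeta_{j+1}(\omega)$, hence $A_j(\cT\omega)=A_{j+1}(\omega)$, $y_j(\cT\omega)=y_{j+1}(\omega)$, and so $a_j(\cT\omega)=a_{j+1}(\omega)$, while $\mathbf E a_j=a$ by definition \eqref{expect}. (ii) \emph{More than two moments}: from \eqref{expect} and the (deterministic) boundedness of $\|U_j\mathbf 1\|_\infty$ under \eqref{EqC2*} one has $0\le a_j\le C\sum_{i\ge0}\|H_j^i\|$; since $H_j^i$ has the law of $A_i\cdots A_1$, Minkowski's inequality and Lemma \ref{Mainlemma} give $(\mathbf E a_j^q)^{1/q}\le C\sum_{i\ge0}(r(q)+\delta)^{i/q}<\infty$ as soon as $r(q)+\delta<1$, and $r(q)<1$ for all $q\in(0,s)$ since $\ln r$ is convex with $r(0)=r(s)=1$ and $r'(0)=\blambda<0$. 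Because the hypothesis $r(2)<1$ means $s>2$, I can fix once and for all a number $q\in(2,s)$. (iii) \emph{Exponentially fast finite‑range approximation}: truncating the $i$‑series in \eqref{expect} at $i=\ell$ and replacing each $\zeta_n$ by $\psi_{n,n-\ell}$ from \eqref{EqPsi} (Theorem \ref{ThZeta}(a), plus the contraction of products of the matrices $\zeta_n$, which under \eqref{EqC2*} have all entries $\ge\eps$) produces a variable $\bar a_j^{(\ell)}$ depending only on $\{(P_i,Q_i,R_i):j-c\ell\le i\le j\}$ with $\|a_j-\bar a_j^{(\ell)}\|_{L^q}\le C\theta^{\ell}$ for some $\theta<1$ (the $L^q$ bound obtained from the $L^1$ bound, $0\le a_j-\bar a_j^{(\ell)}\le a_j$, and log‑convexity of $p\mapsto\mathbf E|a_j-\bar a_j^{(\ell)}|^p$).

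Given these, the endgame is a moment maximal inequality plus Borel--Cantelli. Taking $\ell\asymp\log M$ in (iii), the sequence $\{a_j-a\}$ becomes, up to an $L^q$‑error of order $M^{-10}$ on $\{0,\dots,M\}$, a centred $c\ell$‑dependent sequence; splitting $\{0,\dots,M\}$ into residue classes modulo $c\ell+1$ and applying the Rosenthal and Doob inequalities on each class gives $\mathbf E[\max_{0\le i\le M}|W_i|^q]\le C_q M^{q/2}(\log M)^{q/2}$. Here I would deliberately \emph{not} sum $\bP(\max_{|l|\le n^{\beta}}|W_{n+l}-W_n|>\eps\sqrt n)$ over all $n$, since with only $q\in(2,s)$ moments (and $s$ possibly close to $2$) that series need not converge. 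Instead I take $n_{k+1}=n_k+\lceil n_k^{\beta}\rceil$ (so $n_k\asymp k^{1/(1-\beta)}$) and, for a block $B_k=[n_k,n_{k+1})$ and a slight enlargement $\tilde B_k$ with $|\tilde B_k|\asymp n_k^{\beta}$, bound $\sup_{n\in B_k}\sup_{|l|\le n^{\beta}}|W_{n+l}-W_n|/\sqrt n\le 2n_k^{-1/2}\max_{j\in\tilde B_k}|W_j-W_{n_k}|$. By stationarity, the maximal inequality and Markov,
$$\bP\Bigl(\max_{j\in\tilde B_k}|W_j-W_{n_k}|>\tfrac{\eps}{4}\sqrt{n_k}\Bigr)\le C(\eps)\,n_k^{-q(1-\beta)/2}(\log n_k)^{q/2}\asymp C'(\eps)\,k^{-q/2}(\log k)^{q/2},$$
which is summable in $k$ precisely because $q>2$. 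Borel--Cantelli then yields $\limsup_n\sup_{|l|\le n^{\beta}}\frac1{\sqrt n}|W_{n+l}-W_n|\le\eps/2$ a.s., and letting $\eps\downarrow0$ along a sequence completes the proof (any $\eps_0\in(0,1)$ works).

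The step I expect to be the main obstacle is precisely this last organisation when $s$ is only slightly larger than $2$: the entire scheme hinges on $\{a_j\}$ having \emph{strictly} more than two moments --- which is why the hypothesis is $r(2)<1$ rather than $r(2)\le1$ --- and on replacing the naive per‑$n$ Borel--Cantelli by the block version, so that the summable exponent $k^{-q/2}$ (rather than a divergent one) appears. The technically fiddly inputs are the moment bound $\mathbf E a_j^q<\infty$ for some $q>2$, which relies on the control of products of the random matrices $A_n$ (Lemma \ref{Mainlemma}) and the convexity of $\ln r$, and the exponentially fast finite‑range approximation of $a_j$, which lets one apply classical moment inequalities to an almost finitely dependent sequence; both follow the pattern of estimates already in \cite{BG1,G1,DG2} but require some care because of the heavy tails. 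Alternatively, one could invoke an almost sure invariance principle for $\{a_j-a\}$ --- legitimate since it has $2+\delta$ moments and exponentially decaying dependence --- and then quote L\'evy's modulus of continuity for Brownian motion; I prefer the self‑contained block argument above.
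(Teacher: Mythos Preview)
Your proposal is correct and follows essentially the same route as the paper: reduce to controlling increments of $W_N=\sum_{j<N}(a_j-a)$, prove an $L^q$ maximal inequality for these partial sums by exploiting the exponentially fast finite-range approximation of $a_j$ (splitting into residue classes to manufacture near-independence, then Doob plus a moment inequality for i.i.d.\ sums), and finish by Borel--Cantelli along a sparse block sequence. The paper packages the maximal inequality as Lemma~\ref{lem2}(a) (with $n^{\eps}$-spaced residue classes and Marcinkiewicz--Zygmund in place of your $\log M$-spaced classes and Rosenthal) and then defers the block Borel--Cantelli step to \cite[Lemma~5]{G}; your write-up is more self-contained and makes explicit the point that one must \emph{not} sum the tail bound over all $n$ when $s$ is close to $2$, but only over blocks $n_k\asymp k^{1/(1-\beta)}$ so that the exponent becomes $k^{-q/2}$ with $q>2$. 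One small clarification: in your item (iii) the approximant $\bar a_j^{(\ell)}$ must also replace $y_j$ by a locally defined probability vector (e.g.\ start from the uniform vector at $j-c\ell$ and push forward by the $\psi$'s), exactly as in the paper's construction of $\tilde B(n^{\eps}j,k)$; the contraction of the $\zeta_n$'s you invoke is precisely what makes this harmless.
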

The following notations will be used in the rest of this section.
If $\mathcal{Y}:\Omega\mapsto\mathbb{R}$ is a random variable
then $||\mathcal{Y}||_p=\left(\mathbf{E}|\mathcal{Y}|^{p}\right)^\frac{1}{p}$, where $p=2+2\delta$ and
$\delta>0$ is such that $r(2+2\delta)<1$. Throughout the proof $\delta$ is fixed and
we write $||\cdot||$ for $||\cdot||_p$ whenever the difference between this norm
and the matrix norm is obvious from the context. Set
\begin{equation}\label{H1}
\mathcal{H}(n,\omega)=\sum_{j=0}^{n-1}(a_j-a),\ \
\mathcal{H}^*(n,\omega)=\max_{0\le s\le
n-1}\left|\sum_{j=0}^{s}(a_j-a)\right|.
\end{equation}
The maximal inequality of Lemma \ref{lem2}(a) below is the main ingredient of the proof.
We also state another maximal inequality (Lemma \ref{lem2}(b)) whose proof is quite similar
and which will be useful in Section \ref{ScQMixEnv}.

Let $W(P, Q, R)$ be any $\reals^m$ valued function of the triple of non-negative matrices
satisfying \eqref{stch} and let $w_n=W(P_n, Q_n, R_n).$
Let
\begin{equation}\label{expectgen}
\fa_j\de
 y_j
\left(\sum_{i=0}^{\infty}H_{j+i}^{i}U_{j}w_{j} \right)\ \text{ and }\ \fa\de \lim_{j\to\infty} \mathbf{E}(\fa_j).
\end{equation}
According to the analysis of \cite{DG2}[Section 3], there are constants $C>0, \theta<1$ such that with probability
at least $1-n^{-100}$
$$\left|\fa_j-\sum_{k=1}^m \rho_{(n,k)} w_n(k)\right|\leq C \theta^n.$$
Set
$$ \mathfrak{H}^*(n,\omega)=\max_{0\le s\le
n-1}\left|\sum_{j=0}^{s}(\fa_j-\fa)\right|. $$

\begin{lemma}\label{lem2} For any $\varepsilon>0$ there is a $C$ such that
the following relations hold:

$\DS (a) \quad
 ||\mathcal{H}^*(n)||\le Cn^{\frac{1}{2}+\varepsilon};\quad
$
$\DS (b) \quad
 ||\mathfrak{H}^*(n)||\le Cn^{\frac{1}{2}+\varepsilon}.
$

\end{lemma}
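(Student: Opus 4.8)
The plan is to prove both maximal inequalities simultaneously, since, as the authors note, their proofs are essentially identical; I will phrase the argument for $\mathcal{H}^*(n)$ and indicate the trivial changes for $\mathfrak{H}^*(n)$. The key structural input is that the summands $a_j - a$ (respectively $\fa_j - \fa$) have, up to an exponentially small error controlled by the backtracking estimates of \cite{DG2}[Section 3] (cf. Lemma \ref{LmBack}), the form of a function of the environment to the left of site $j$ that depends on distant coordinates only weakly. More precisely, the quantities $\zeta_{j-1}, y_j, H_j^i, U_{j-i}$ appearing in \eqref{4.20}--\eqref{expect} are built from the contracting recursions \eqref{EqPsi}, \eqref{EqZeta}, so replacing the true $\zeta_{j-1}$ by the finite-range approximation $\psi_{j-1, j-\ell}$ (and correspondingly truncating the sum over $i$ at $i = \ell$) introduces an error bounded by $C\theta^\ell$ in $L^p$, uniformly in $j$, where $\theta < 1$ comes from Theorem \ref{ThZeta}(a) together with the integrability furnished by $r(2 + 2\delta) < 1$. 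This is where the hypothesis $r(2) < 1$ — strengthened to $r(2+2\delta) < 1$ for some $\delta > 0$ by continuity of $r$ — is used: it guarantees that $\|a_j - a\|_p < \infty$ and that the products $H_j^i$ contribute a summable geometric series in the $L^p$ norm.

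The core of the argument is then a moment bound for maxima of partial sums of a stationary, exponentially (in fact finitely, after truncation) weakly dependent sequence. First I would split $a_j - a = (a_j - a_j^{(\ell)}) + (a_j^{(\ell)} - \mathbf{E} a_j^{(\ell)}) + (\mathbf{E} a_j^{(\ell)} - a)$, where $a_j^{(\ell)}$ is the $\ell$-local approximation with $\ell = \ell(n)$ chosen as a small multiple of $\ln n$. The first and third terms are $O(\theta^\ell) = O(n^{-\text{const}})$ in $L^p$ and contribute at most $O(n \cdot n^{-\text{const}})$ to $\mathcal{H}^*(n)$, which is negligible. For the middle term — a sum of centered random variables each measurable with respect to a block of $\ell$ consecutive coordinates, hence $\ell$-dependent — I would apply a block decomposition: group the indices $0, \dots, n-1$ into $\sim n/\ell$ blocks of length $\ell$, and within the even-indexed and odd-indexed blocks the block sums are independent. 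A maximal inequality for sums of independent random variables (Doob/Kolmogorov, or the Rosenthal inequality to get the $p$-th moment) then gives $\|\mathfrak{H}^{*}_{\text{mid}}(n)\|_p \leq C (n/\ell)^{1/2} \cdot \ell \cdot \max_j \|a_j^{(\ell)} - \mathbf{E} a_j^{(\ell)}\|_p \leq C \sqrt{n\ell} = C\sqrt{n}\,\sqrt{\ln n}$, which is $o(n^{1/2 + \varepsilon})$ for any $\varepsilon > 0$; the passage from the maximum over block-endpoints to the maximum over all $s$ costs at most an extra $\max$ over a block of length $\ell$, again absorbed. Summing the three contributions yields $\|\mathcal{H}^*(n)\|_p \leq C n^{1/2 + \varepsilon}$.

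The main obstacle I anticipate is not the probabilistic maximal inequality — that is standard once the dependence structure is in hand — but rather making the finite-range approximation genuinely uniform in $j$ and controlling it in the $L^p$ (not merely $L^2$) norm. One must check that the contraction in \eqref{EqZeta} is uniform over the support of the environment (which follows from \eqref{EqC2*}, giving a uniform spectral gap for the relevant substochastic operators), and that the $p$-th moments of the random constants multiplying $\theta^\ell$ are finite; the latter is exactly where $r(2+2\delta)<1$ enters, via \eqref{r2} applied to bound $\mathbf{E}\|H_j^i\|^{p}$ by $C (r(p)^{1/?})^{i}$ with base $<1$ for $i$ large. For part (b), $\fa_j$ is defined by \eqref{expectgen} with $H_{j+i}^i$ running over coordinates to the \emph{right} of $j$ rather than to the left, but the structure is symmetric and the same truncation-plus-blocking scheme applies verbatim, with $w_j = W(P_j,Q_j,R_j)$ bounded (or $L^p$-integrable) by hypothesis on $W$; I would simply remark that the proof of (b) is obtained from that of (a) by reversing the role of left and right neighbourhoods.
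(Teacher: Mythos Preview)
Your proposal is correct and shares the paper's overall strategy --- truncate the series for $a_j$ and replace the $\zeta_n,\,y_n$ recursions by depth-$\ell$ approximations with $\ell=K\ln n$, so that the centred summands become finite-range dependent while the remainder is $O(\beta^\ell)$ in $L^p$ thanks to $r(2+2\delta)<1$ --- but the combinatorial step you use to pass from weak dependence to independence differs. The paper \emph{thins} the index set into $n^\varepsilon$ arithmetic progressions of common difference $n^\varepsilon$; along each progression the truncated summands $\tilde B(n^\varepsilon j,k)$ are functionals of disjoint environment boxes and hence genuinely i.i.d., so Doob's maximal inequality together with Marcinkiewicz--Zygmund gives $\|\mathcal H^*(n)\|\le C n^\varepsilon\cdot n^{(1-\varepsilon)/2}$. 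Your Bernstein-type blocking into consecutive intervals of length $\ell$ with an odd/even split is equally valid and even yields the slightly sharper $C\sqrt{n\ln n}$, at the cost of an extra (easy) term controlling the maxima of within-block partial sums. Two small points to tighten: first, the finite-range approximation must also be applied to $y_j=y_{j-\ell}\zeta_{j-\ell+1}\cdots\zeta_j$ (with each $\zeta_k$ in turn replaced by $\psi_{k,k-\ell}$), so $a_j^{(\ell)}$ is measurable with respect to about $2\ell$ coordinates rather than $\ell$ --- this only affects constants. Second, for part (b) note that $\fa_j$ depends on the environment on \emph{both} sides (through $y_j$ to the left and $H_{j+i}^i$ to the right), so the truncation is two-sided rather than a simple left/right reversal; the paper phrases this as weak dependence of $\fa_{j_1}(k)$ and $\fa_{j_2}(k)$ for $|j_1-j_2|>2k$, and your blocking argument goes through unchanged once blocks are separated by $2\ell$.
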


\begin{remark} The statements of this Lemma are similar to those of Lemma 4 in \cite{G}.
However, in the proof, an extra effort is needed in order to control the dependence between $a_j$'s
which in the case of a strip is much stronger than in the case of a simple walk.
\end{remark}

Once the maximal inequality of Lemma \ref{lem2}(a) is obtained, the proof of Lemma~\ref{LmETkFluct1}
proceeds exactly as the derivation of Lemma 5 from Lemma 4 in \cite{G}.

\begin{proof}[Proof of Lemma \ref{lem2}] We can suppose without any loss of rigor that $n^\varepsilon$
and $n^{1-\varepsilon}$ are integer numbers. One can present $\mathcal{H}(n)$ as
$$
\mathcal{H}(n)=\sum_{i=0}^{n^\varepsilon-1}\sum_{j=0}^{n^{1-\varepsilon}-1}(a_{i+n^\varepsilon j}-a).
$$
As will be seen below, the members of the inner sum are asymptotically independent random variables
(as $n\to\infty$) and this property plays a crucial role in the proof.
We can estimate $\mathcal{H}^*(n)$ as
$$
\mathcal{H}^*(n)\le \sum_{i=0}^{n^\varepsilon-1}\max_{0\le s\le
n^{1-\varepsilon}}\left|\sum_{j=0}^{s}(a_{i+n^\varepsilon j}-a)\right|.
$$
Hence
$$
||\mathcal{H}^*(n)||\le \sum_{i=0}^{n^\varepsilon-1}\left\Vert\max_{0\le s\le
n^{1-\varepsilon}}\left|\sum_{j=0}^{s}(a_{i+n^\varepsilon j}-a)\right|\,
\right\Vert=
n^\varepsilon\left\Vert\max_{0\le s\le
n^{1-\varepsilon}}\left|\sum_{j=0}^{s}(a_{n^\varepsilon j}-a)\right|\,\right\Vert,
$$
where the last equality is due to the fact the $a_j$ is a stationary sequence.
Let $\DS
\mathcal{S}(n, \varepsilon)=\max_{0\le s\le
n^{1-\varepsilon}}\left|\sum_{j=0}^{s}(a_{n^\varepsilon j}-a)\right|$ and present
$a_j-a=B(j,k)+D(j,k),$ where
\begin{equation}\label{B}
B(j,k)\de y_j\left(\sum_{i=0}^{k}H_j^{i}U_{j-i}\mathbf{1}\right)-
\mathbf{E}\left[y_j\left(\sum_{i=0}^{k}H_j^{i}U_{j-i}\mathbf{1}\right)\right],
\end{equation}
$$
D(j,k)\de y_j\left(\sum_{i=k+1}^{\infty}H_j^{i}U_{j-i}\mathbf{1}\right)-
\mathbf{E}\left[y_j\left(\sum_{i=k+1}^{\infty}H_j^{i}U_{j-i}\mathbf{1}\right)\right],
$$
and $k$ is to be specified later, see \eqref{SetK}.
We then have
$$
\begin{aligned}
\mathcal{S}(n, \varepsilon)&=\max_{0\le s \le
n^{1-\varepsilon}}\left|\sum_{j=0}^{s}(B(n^\varepsilon j,k)+D(n^\varepsilon j,k))\right|\\
&\le\max_{0\le s\le n^{1-\varepsilon}}\left|\sum_{j=0}^{s}B(n^\varepsilon j,k)\right|+
\sum_{j=0}^{n^{1-\varepsilon}-1}\left|D(n^\varepsilon j,k)\right| .
\end{aligned}
$$
Therefore
\begin{equation}\label{sum}
\begin{aligned}
||\mathcal{S}(n, \varepsilon)||&\le \left\Vert\max_{0\le s\le
n^{1-\varepsilon}}\left|\sum_{j=0}^{s}B(n^\varepsilon j,k)\right|\,\right\Vert+
\sum_{j=0}^{n^{1-\varepsilon}-1}||D(n^\varepsilon j,k)||\\
&=\left\Vert\max_{0\le s\le
n^{1-\varepsilon}}\left|\sum_{j=0}^{s}B(n^\varepsilon j,k)\right|\,\right\Vert+
n^{1-\varepsilon}||D(0,k)||,
\end{aligned}
\end{equation}
where the last equality is again due to stationarity. Since
by \eqref{r2}
$$\mathbf{E}(||H_j^i||^{2+2\delta})\le C r(2+2\delta)^i$$
we have for some constants $C_1,\ C_2,\ C_3$:
$$
\begin{aligned}
&||D(0,k)||=\left\Vert
y_j\left(\sum_{i=k+1}^{\infty}H_j^{i}U_{j-i}\mathbf{1}\right)-
\mathbf{E}\left[y_j\left(\sum_{i=k+1}^{\infty}H_j^{i}U_{j-i}\mathbf{1}\right)
\right] \right\Vert\\
&\label{25} \le C_1 \sum_{i=k+1}^{\infty}\left\Vert\,
\left\Vert H_j^{i}\right\Vert\,\right\Vert_p+
\mathbf{E}\left[y_j\left(\sum_{i=k+1}^{\infty}H_j^{i}U_{j-i}\mathbf{1}\right)\right]
\le C_2\sum_{i=k+1}^{\infty}r(2+2\delta)^{\frac{i}{2+2\delta}}=C_3 \beta^k,
\end{aligned}
$$
where $\beta\de r(2+2\delta)^{\frac{1}{2+2\delta}}<1$.

It remains to estimate
$\DS \mathcal{I}\de \max_{0\le s\le n^{1-\varepsilon}}
\left\Vert\sum_{j=0}^{s}\left|B(n^\varepsilon j,k)\right|\,\right\Vert$.
To this end we shall introduce independent random variables $\tilde{B}(n^\varepsilon j,k)$ such that for some $n_0$ the
following holds: if $n>n_0$ then
\begin{equation}\label{B1}
||B(n^\varepsilon j,k)-\tilde{B}(n^\varepsilon j,k)||\le\Const n^{-100}.
\end{equation}
This is done as follows.

For each $0\le j\le n^{1-\varepsilon}$ denote $I_j\de [n^\varepsilon j,\, n^\varepsilon (j+1)-1]$
and define stochastic matrices $\psi_s^{(j)}$ with $s\in I_j$
as follows: set $\psi_{n^\varepsilon j}^{(j)}(i_1,i_2)=m^{-1}$ and compute
$\psi_s^{(j)}$ for all $s>j n^\varepsilon$ as in \eqref{EqPsi}, namely
$\psi_s^{(j)}\de(I-R_{j}-Q_{j}\psi_{s-1}^{(j)})^{-1}P_{s}.$

Next, let $y_j^{(j)}=(m^{-1},\ldots,m^{-1})$ and compute $y_s^{(j)}=y_{s-1}^{(j)}\psi_s^{(j)}$
for $s>n^\varepsilon$.

Now define $\tilde{A}_{s}\de(I-Q_{s}\psi_{s-1}^{(j)}-R_{s})^{-1}Q_{s}$,
$\tilde{U}_s\de (I-Q_s\psi_{s-1}^{(j)}-R_s)^{-1}$,  and
$ \tilde{H}_s^i\de \tilde{A}_s\ldots \tilde{A}_{s-i+1}.$

Finally $\tilde{B}(j,k)$ is defined by \eqref{B} with the difference that the corresponding $A,\ U,\ H$
are replaced by the just defined $\tilde{A},\ \tilde{U},\ \tilde{H}$.

From now on we suppose that
\begin{equation}
\label{SetK}
k=K\ln n
\end{equation}
where $K$ is a large enough constant. It follows from the
above definitions that for large $n$ the random variables $\tilde{B}(n^\varepsilon j,k)$,
$0\le j\le n^{1-\epsilon}$, are independent and identically distributed simply because they are
(the same) functions of the parts of the environment belonging to pairwise disjoint boxes of the strip.
Also, $\mathbf{E}(\tilde{B}(n^\varepsilon j,k))=0$.

It follows from results obtained in \cite{DG2, G1} (see appendix in each of these papers) that
there is a $\bar{\beta}<1$ such that for $s\in I_j$
$\left|\left|||\zeta_s-\psi_{s}^{(j)}||\right|\right|_p\le C \bar{\beta}^{s-n^\varepsilon j}.$
The derivation of \eqref{B1} follows from this estimate in a standard way.

We can now estimate $\hat{\mathcal{S}}\de
\left\Vert\max_{0\le s\le
n^{1-\varepsilon}}\left|\sum_{j=0}^{s}B(n^\varepsilon j,k)\right|\,\right\Vert$.
Obviously
$$
\hat{\mathcal{S}}\le \left\Vert\max_{0\le s\le
n^{1-\varepsilon}}\left|\sum_{j=0}^{s}\tilde{B}(n^\varepsilon j,k)
\right|\,\right\Vert+
\left\Vert\sum_{j=0}^{s}|B(n^\varepsilon j,k)-\tilde{B}(n^\varepsilon j,k)|\,
\right\Vert
$$
By the Doob inequality for martingales we now have that
$$
\left\Vert\max_{0\le s\le
n^{1-\varepsilon}}\left|\sum_{j=0}^{s}\tilde{B}(n^\varepsilon j,k)
\right|\, \right\Vert\le
\frac{2+2\delta}{1+2\delta} \left\Vert\sum_{j=0}^{n^{1-\varepsilon}}\tilde{B}(n^\varepsilon j,k)\right\Vert
$$
and by the Marcinkiewicz-Zygmund inequality
$$
\left\Vert\sum_{j=0}^{n^{1-\varepsilon}}\tilde{B}(n^\varepsilon j,k)\right\Vert
\le C n^{(1-\varepsilon)/2}
$$
Part (a) of Lemma \ref{lem2}  is thus proved.

The proof of part (b) is similar. Namely we approximate
$\fa_j$ by $$\fa_j(k)\de  y_j
\left(\sum_{i=0}^{k}H_{j+i}^{i}U_{j}w_{j} \right) $$
and use weak dependence of $a_{j_1}(k)$ and $a_{j_2}(k)$ for $\left| j_1-j_2\right|>2k$
similarly to the argument of part (a).
\end{proof}

\section{Proof of Theorem \ref{LLThittingtime}.}
\label{ScLLTHit}
\begin{proof}[Proof of Theorem \ref{LLThittingtime}]
We shall use the construction of the enlarged random environment introduced in \cite{G1}.

For a RW starting from $z\in L_0$ present $T(n\,|\,\omega,z)$ as
\begin{equation}\label{equality}
{T}(n\,|\,\omega,z)=\tau_0+\tau_{1}+\ldots+
\tau_{n-1},
\end{equation}
where $\tau_j$ is the time the walk takes to reach $L_{j+1}$ after having reached $L_j$.

The random variables $\tau_j$ are not independent. However, they are conditionally independent,
where the condition is that the walk hits the layers $L_1,\ldots,L_{n-1}$ at a given sequence of points.
We shall now briefly describe how the measure $\mathbb{P}_{\omega,z}$ on $\mathfrak{X}_z$ can be presented
as an integral of the just mentioned conditional measures over a measure on the set of sequences in the strip.
The construction of this conditional measure is similar to the one used in \cite{G1}.

Let $ J=\{\,
\mathfrak{i}=\left((k,i_k)\right)_{0\le k<\infty}:\ 1\le i_k\le
m\  \}$ be the set of all
sequences of points in the strip $\bbS$ which contain exactly one point from each layer
and $ J_{i_0}\subset J$ be all such sequences starting with $(0,i_0)$.

Let $\zeta_k\equiv \zeta_k(\omega)$ be matrices from Theorem \ref{ThZeta}(a).
Note that due to
\eqref{EqC2*} there
is $\varepsilon>0$ such that the inequalities $\zeta_k(i,j)\ge\varepsilon$ hold for all $k,i,j$.

From now on, $i_0$ will be fixed and we shall define a Markov measure on $J_{i_0}$ in the usual way.
Namely, for $ k\ge 0$ consider a cylinder set in $J_{i_0}$:
\[
\mathrm{C}_{k}(i_0,\ldots,i_{k})\de\{\ \mathfrak{i}\in J_{i_0}\,:\,
\mathfrak{i}_j=(j,i_j)\ \hbox{ for all }\ j\in[0,k]\ \},
\]
where $\mathfrak{i}_j$ denotes the $j$-th coordinate of the sequence $\mathfrak{i}$. Set
\begin{equation}\label{3.2}
\Lambda_{\omega,i_0}(\mathrm{C}_{k}(i_0,\ldots,i_{k}))
\de \zeta_0(i_0,i_{1})
...\zeta_{k}(i_{k-1},i_{k}).
\end{equation}
$\Lambda_{\omega,i_0}$ defined by \eqref{3.2} is a Markov measure with transition probabilities
given by  stochastic matrices $\zeta_n.$ As usual, this measure can be
extended to the Borel sigma-algebra generated by the cylinder sets.
Let us denote this sigma-algebra by $\mathcal{F}_{i_0}$. We thus have a
probability space $(J_{i_0},\mathcal{F}_{i_0},\Lambda_{\omega,i_0}(d\mathfrak{i}))$.

We can now define a new probability space
$(\TO,\mathfrak{S},\mathcal{P})$ where
\[
\TO\de\Omega\times J_{i_0}=\{\,\tom=(\omega,\mathfrak{i})
\,:\, \omega\in\Omega,\ \mathfrak{i}\in J_{i_0}\,\}
\]
with a product sigma-algebra
$\mathfrak{S}\de\mathcal{F}\otimes\mathcal{F}_{i_0}$ and $\mathcal{P}(d\tom)\de \mathbf{P}(d\omega)
\Lambda_{\omega,i_0} (d\mathfrak{i})$ on $\TO$.

\smallskip\noindent\textbf{Definition.}
\textit{A pair $\tom=(\omega,\mathfrak{i})$ is called the enlarged
random environment. The set $\TO$ is the collection of
enlarged environments with
$(\TO,\mathfrak{S},\mathcal{P})$ being the corresponding probability space.}
\begin{remark}
The above construction depends on $i_0$ but the asymptotic behaviour of the walk doesn't
which is why in some of the notations above $i_0$ was dropped.
\end{remark}
Next, for $\mathfrak{i}\in J_{i_0}$ denote by $\mathfrak{X}^{\mathfrak{i}}_{i_0}$ the space  of trajectories  of
the walk which start at $(0,i_0)$ and reach each layer $L_j$, $j\ge 1$, for the first time at $(j,i_j)$.
For any $n\ge 0$ and $\xi\in \mathfrak{X}^{\mathfrak{i}}_{i_0}$ denote by $z_0,z_1,...,z_{T_n}$ the values of
the trajectory at times $t=0,1,...,T_n$, where, as usual, $T_n$ is the hitting time
of layer $n$ (with $\xi(0)=(0,i_0),\ \xi(T_n)=(n,i_n)$). The probability $V_{\omega,\mathfrak{i}}$ on the set of
trajectories from  $\mathfrak{X}^{\mathfrak{i}}_{i_0}$ is defined by
\begin{equation}\label{Vo}
V_{\omega,\mathfrak{i}}\left(\xi_t=z_t\ \forall\ t\in [0,T_n]\right)
\de\frac{1}{\prod_{k=0}^{n-1}\zeta_k(i_k,i_{k+1})}\prod_{t=0}^{T_n-1}
\mathbb{P}_\omega(\xi_{t+1}=z_{t+1}|\xi_t=z_t).
\end{equation}
It is known (\cite{BG1}, \cite{G1}) that if the RW is transient to the right then the probabilistic
meaning of matrices $\zeta_n$ is given by
\[
\zeta_{n}(i,j) = 
\mathbb{P}_{\omega}\left(\hbox{RW starting from $(n,i)$  hits $L_{n+1}$ at $(n+1,j)$}\right).
\]
Therefore $\Lambda_{\omega,i_0}(\mathrm{C}_{k}(i_0,\ldots,i_{k}))$ is the probability that a RW staring from
$(0,i_0)$ proceeds to $+\infty$ so that on its way it hits $L_j$ at $(j,i_j)$ for all $j\in[1,k]$.
We obtain that
\begin{equation}\label{decomposition}
\mathbb{P}_{\omega,(0,i_0)}(d\xi)=\int_{J_{i_0}}\Lambda_{\omega,i_0}(d\mathfrak{i})V_{\omega,\mathfrak{i}}(d\xi).
\end{equation}
Denote by $T(n|\omega,\mathfrak{i})$ the sum in the right hand side of \eqref{equality}
conditioned on the walk hitting each $L_j,\ j\ge0,$ at $(j,i_j)\in \mathfrak{i}$.

We shall now check that Theorem \ref{Thllt} implies that the LLT holds for $T(n|\omega,\mathfrak{i})$ for $\Lambda_{\omega,i_0}$--a.e.
$\mathfrak{i}$ and deduce from here
that the LLT holds for  $T(n|\omega,(0,i_0))$ by integrating over $\mathfrak{i}$ using \eqref{decomposition}.

Indeed, $T_n=T(n\,|\,\omega,\mathfrak{i})$ is a sum of independent random variables $\tau_j$ such that the corresponding
$\mathfrak{d}_n$ and $\mathfrak{c}_n$ (from Theorem \ref{Thllt}) grow linearly and
$\mathfrak{c}_n\mathfrak{d}_n^{-1}< \Const$. Also,
$T(n|\omega,\mathfrak{i})$ satisfies the CLT by the following result from \cite{G1}.
\begin{theorem}\label{Th5.1} Suppose that conditions \eqref{EqC1}, \eqref{EqC2*}, \eqref{EqC3}
are satisfied and $s>2.$
 Then for $\mathcal{P}$-a.e. enlarged
environment $\tom=(\omega,\mathfrak{i})$
\begin{equation}\label{T1-clt}
\lim_{n\rightarrow\infty}V_{\omega,\mathfrak{i}}\left\{
\frac{{T}(n\,|\,\omega,\mathfrak{i})-
E_{\omega,\mathfrak{i}}
{T}(n\,|\,\omega,\mathfrak{i})}{\sqrt{n}}<x\right\}
=\frac{1}{\sqrt{2\pi}\hat{\sigma}}\int_{-\infty}^x
e^{-\frac{u^2}{2{\hat{\sigma}}^2}}du,
\end{equation}
where $E_{\omega,\mathfrak{i}}$ is the expectation with respect to the measure $V_{\omega,\mathfrak{i}}$. Here
\begin{equation}\label{V-clt}
\hat{\sigma}^2=\lim_{n\rightarrow\infty}n^{-1}\mathrm{Var}_{\omega,\mathfrak{i}}
{T}(n\,|\,\omega,\mathfrak{i})
\end{equation}
and the convergence in (\ref{V-clt}) holds with
$\mathcal{P}$-probability 1.
\end{theorem}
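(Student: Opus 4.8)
The plan is to derive \eqref{T1-clt} from the Lindeberg--Feller central limit theorem, applied to the decomposition $T(n\,|\,\omega,\mathfrak{i})=\sum_{j=0}^{n-1}\tau_j$ (cf. \eqref{equality}). The whole point of the enlarged environment is that \emph{under $V_{\omega,\mathfrak{i}}$ the crossing times $\tau_j$ are independent}: by the strong Markov property at the successive first-hitting times of the layers, once the walk first reaches $(j,i_j)$ the law of the excursion until it first enters $L_{j+1}$ is that of the walk started at $(j,i_j)$, run until it enters $L_{j+1}$, and conditioned to enter it at $(j+1,i_{j+1})$ --- an event of $\mathbb{P}_\omega$--probability $\zeta_j(i_j,i_{j+1})\ge\varepsilon>0$; this excursion uses only the environment in layers $\le j$ (its backtracking being exponentially rare by Lemma \ref{LmBack}), so the $V_{\omega,\mathfrak{i}}$--law of $\tau_j$, and in particular its mean $E_{\omega,\mathfrak{i}}(\tau_j)$ and variance $\mathrm{Var}_{\omega,\mathfrak{i}}(\tau_j)$, depends only on the triple $\Psi_j\de(\{(P_k,Q_k,R_k)\}_{k\le j},\,i_j,\,i_{j+1})$.

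Because \eqref{T1-clt}--\eqref{V-clt} are asymptotic statements, the remark that the walk's limiting behaviour does not depend on the initial point $i_0$ lets us replace the deterministic start $\delta_{i_0}$ on $J_{i_0}$ by the stationary distribution $y_0$ from \eqref{DistrofX0}; the two Markov measures on hitting sequences share the transition matrices $\zeta_k$, whose entries are $\ge\varepsilon$, so they couple exponentially fast and the change is harmless. With the stationary start, $j\mapsto\Psi_j$ is stationary under $\mathcal{P}$ --- the environment is i.i.d., hence shift-stationary, and $(i_j)$ is the associated stationary Markov chain in the environment --- and it is ergodic (an i.i.d. base carrying a uniformly ergodic Markov extension being ergodic, as in the set-up of \cite{BG1,G1}). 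Granting the integrability statement below, Birkhoff's theorem gives, for $\mathcal{P}$--a.e. $(\omega,\mathfrak{i})$,
\[
\frac1n\sum_{j=0}^{n-1}\mathrm{Var}_{\omega,\mathfrak{i}}(\tau_j)\ \longrightarrow\ \hat\sigma^2\de\mathbf{E}_{\mathcal{P}}\!\left(\mathrm{Var}_{\omega,\mathfrak{i}}(\tau_0)\right),
\]
which is \eqref{V-clt}, and, for each fixed $M$,
\[
\frac1n\sum_{j=0}^{n-1}E_{\omega,\mathfrak{i}}\!\left[(\tau_j-E_{\omega,\mathfrak{i}}\tau_j)^2\,\mathbf{1}_{\{|\tau_j-E_{\omega,\mathfrak{i}}\tau_j|>M\}}\right]\ \longrightarrow\ \mathbf{E}_{\mathcal{P}}\!\left[E_{\omega,\mathfrak{i}}\!\left[(\tau_0-E_{\omega,\mathfrak{i}}\tau_0)^2\mathbf{1}_{\{|\tau_0-E_{\omega,\mathfrak{i}}\tau_0|>M\}}\right]\right].
\]
Since $\varepsilon\sqrt n>M$ for $n$ large, the Lindeberg sum with threshold $\varepsilon\sqrt n$ is eventually dominated by the one with threshold $M$; letting $M\to\infty$ and using dominated convergence (dominating function $\mathrm{Var}_{\omega,\mathfrak{i}}(\tau_0)$) shows that the Lindeberg condition holds for $\mathcal{P}$--a.e. $(\omega,\mathfrak{i})$. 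Non-degeneracy $\hat\sigma^2>0$ follows from \eqref{EqC3}: even after conditioning on the terminal point $(1,i_1)$, the walk from $(0,i_0)$ has positive probability of stepping directly into $L_1$ (so $\tau_0=1$) and positive probability of first making a self-loop or an in-layer step (so $\tau_0\ge2$), whence $\mathrm{Var}_{\omega,\mathfrak{i}}(\tau_0)>0$ on a set of positive $\mathcal{P}$--measure. Lindeberg--Feller then yields \eqref{T1-clt}.

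The main obstacle is the integrability input $\mathbf{E}_{\mathcal{P}}\!\left(\mathrm{Var}_{\omega,\mathfrak{i}}(\tau_0)\right)<\infty$, and this is precisely where $s>2$ is used. Since the conditioning event has $\mathbb{P}_\omega$--probability $\ge\varepsilon$, one has $E_{\omega,\mathfrak{i}}(\tau_0^2)\le\varepsilon^{-1}\,\mathbb{E}_{\omega,(0,i_0)}(\tau_0^2)$, reducing matters to the unconditioned second moment of the crossing time. That moment is controlled by the representation of hitting-time moments through the matrices $\zeta_k,A_k,U_k$ from \cite{G1} (the first-moment version being \eqref{4.20}--\eqref{expect}): the relevant series is geometric with ratio governed by $\mathbf{E}(\|H^i_j\|^{2+2\delta})\le C\,r(2+2\delta)^i$, and choosing $\delta>0$ with $r(2+2\delta)<1$ --- possible exactly because $r(2)<1\iff s>2$ --- makes it summable. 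Equivalently one argues directly from the occupation-time tail $\mathbf{P}(\rho_{(0,i)}>t)\le Ct^{-s}$ of Lemma \ref{LmTail}(a), which gives $\mathbf{E}(\rho_{(0,i)}^2)<\infty$ when $s>2$, together with the exponential backtracking bound of Lemma \ref{LmBack} to pass between $\tau_0$ and $\sum_i\rho_{(0,i)}$. The one further point needing care is that conditioning on the entire sequence $\mathfrak{i}$ really does localize, for the law of $\tau_j$, to the pair $(i_j,i_{j+1})$ --- again a consequence of the strong Markov property at layer-hitting times and of transience, which decouples the excursion $[L_j\to L_{j+1}]$ from the hitting points $i_k$, $k\notin\{j,j+1\}$.
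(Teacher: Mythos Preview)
The paper does not actually prove Theorem \ref{Th5.1}; it is quoted verbatim as a known result from \cite{G1} (see the sentence immediately preceding the statement: ``Also, $T(n|\omega,\mathfrak{i})$ satisfies the CLT by the following result from \cite{G1}''). Your sketch follows precisely the strategy of \cite{G1}: independence of the $\tau_j$ under $V_{\omega,\mathfrak{i}}$, stationarity and ergodicity of the shift on the (stationary version of the) enlarged environment, Birkhoff's theorem for \eqref{V-clt} and for the truncated second moments, and Lindeberg--Feller. The integrability $\mathbf{E}_{\mathcal{P}}\bigl(\mathrm{Var}_{\omega,\mathfrak{i}}(\tau_0)\bigr)<\infty$ via $r(2+2\delta)<1$ and the bound $E_{\omega,\mathfrak{i}}(\tau_0^2)\le\varepsilon^{-1}\mathbb{E}_{\omega,(0,i_0)}(\tau_0^2)$ is the right mechanism, and matches how \cite{G1} uses $s>2$.

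Two small points are worth tightening. First, your non-degeneracy argument assumes $P_0(i_0,i_1)>0$ so that $\tau_0=1$ occurs with positive probability, but \eqref{EqC2*} only gives $((I-R_0)^{-1}P_0)(i_0,i_1)>\varepsilon$, not positivity of the direct step. The claim is saved by \eqref{EqC3} alone: any path realizing $\tau_0=k$ and ending at $(1,i_1)$ can be prefixed by a self-loop at $(0,i_0)$ (probability $\ge\kappa$) to give a path with $\tau_0=k+1$ and the same endpoint, so $\tau_0$ takes at least two values under $V_{\omega,\mathfrak{i}}$. Second, your alternative integrability route via Lemma \ref{LmTail}(a) and $\mathbf{E}(\rho_{(0,i)}^2)<\infty$ is not a drop-in replacement, since $\tau_0$ counts visits to \emph{all} of $\{k\le 0\}\times\{1,\dots,m\}$, not just to layer $0$; summing the $\rho_{(k,i)}$ over $k\le 0$ requires the exponential decay in $k$ coming from the $A$-products, which is exactly what the matrix representation in \cite{G1} encodes --- so your first route is the clean one.
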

Thus equation \eqref{llt} in our context reads as follows:
\begin{equation}\label{llt1}
\sqrt{2\pi n}\hat{\sigma}\ V_{\omega,\mathfrak{i}}\left(T_n=k\right)=
\exp\left(-\frac{(k-E_{\omega,\mathfrak{i}}T_n)^2}{2n\hat{\sigma}^2}\right)+
\varepsilon_n(k, \omega, \mathfrak{i})
\end{equation}
where for a.e. $(\omega, \mathfrak{i})$
$$\lim_{n\to\infty} \sup_{k\in\integers} \left|\varepsilon_n(k,\omega,\mathfrak{i}) \right|=0$$
and $\hat{\sigma}$ is the same as in Theorem \ref{Th5.1}.

It remains to carry out the integration of all parts of \eqref{llt1} over
$\Lambda_{\omega,i_0}(d\mathfrak{i})$. Obviously,
$$
\int_{J_{(0,i_0)}} V_{\omega,\mathfrak{i}}\left(T_n=k\right)\Lambda_{\omega,i_0}(d\mathfrak{i})=
\mathbb{P}_{\omega,(0,i_0)}\left(T_n=k\right).
$$
In order to control the RHS of \eqref{llt1}, note that
$$
\exp\left(-\frac{(k-E_{\omega,\mathfrak{i}}T_n)^2}{2n\hat{\sigma}^2}\right)=
\exp\left(-\frac{1}{2}\left((k-\mathbb{E}_{\omega,(0,i_0)}T_n)n^{-\frac{1}{2}}\hat{\sigma}^{-1}-\hat{\sigma}^{-1}\phi_n\right)^{2}\right)
$$
where $\phi_n=(E_{\omega,\mathfrak{i}}T_n-\mathbb{E}_{\omega,(0,i_0)}T_n )n^{-\frac{1}{2}}$.
By Theorem 7 in \cite{G1},
\begin{equation}
\label{CLTShift}
\phi_n
\text{ has asymptotically normal distribution }\mathcal{N}(0,\tilde{\sigma}^2),
\end{equation}
where
$\tilde{\sigma}$ is a function of the parameters of the model (see \cite{G1}, formula (3.5)).
It may happen that $ \tilde{\sigma}=0$ in which case $\phi_n\to 0$ as $n\to\infty$ and can therefore
be ignored. So, from now on we assume that $ \tilde{\sigma}>0$ (which is the only interesting case).

For a given $\varepsilon>0$, the family of functions $\exp(-(a-c^{-1}\phi)^2)$ with $c\ge \varepsilon $ and arbitrary $a$
is uniformly continuous in $\phi$. Therefore the usual property of convergence in distribution implies that
{\footnotesize
\begin{equation}
\label{NormConv}
\sup_k\left|\int_{J_{(0,i_0)}}\exp\left(-\frac{1}{2}\left(\frac{(k-E_{\omega}T_n)}{\sqrt{n}\hat{\sigma}}-\hat{\sigma}^{-1}\phi_n\right)^{2}\right)
\Lambda_{\omega,(0,i_0)}(d\mathfrak{i})- \exp\left(-\frac{(k-E_{\omega}T_n)^2}{2n(\hat{\sigma}^2+\tilde{\sigma}^2)}\right)\right|\to 0
\end{equation}}
as $n\to\infty$.

It remains to show that for $\bP$--almost every $\omega$
\begin{equation}
\label{IntErrror}
\int_{J_{(0,i_0)}} \varepsilon_n(k, \omega, \mathfrak{i})\Lambda_{\omega,(0,i_0)}(d\mathfrak{i})\to 0
\end{equation}
uniformly in $k$. To this end we note that due to \eqref{llt1}
\[
|\varepsilon_n|\le \sqrt{2\pi n}\hat{\sigma}\ V_{\omega,\mathfrak{i}}\left(T_n=k\right) +1.
\]
We claim that the first summand on the RHS is uniformly bounded.
Indeed, $T_n$ conditioned on $(\omega,\mathfrak{i})$ is a sum of independent random variables
$\tau_j.$ Moreover, due to \eqref{EqC3} there exists
$\epsilon>0$ such that for each $\mathfrak{i}$ for each $l$ the probability
$$V_{\omega,\mathfrak{i}}(\tau_j=l\,|\,\mathfrak{i})\leq 1-\epsilon$$
(in fact, one can take $\epsilon=\frac{1}{1+\kappa}$ where $\kappa$ is from
\eqref{EqC3}).
Now Theorem 3 from \cite[Part III, \S 2]{Pet} implies that there is a constant
$C$ such that
\begin{equation}
\label{QuenchConc}
\sqrt{2\pi n}\hat{\sigma}\ V_{\omega,\mathfrak{i}}\left(T_n=k\right)\le C.
\end{equation}
Therefore $|\varepsilon_n|$ is bounded above and \eqref{IntErrror}
follows from the
dominated convergence theorem.

Theorem \ref{LLThittingtime} is proved (with $\bar{D}^2=\hat{\sigma}^2+\tilde{\sigma}^2$)
where $\hat\sigma$ is from \eqref{llt1} and $\tilde\sigma$ is from
\eqref{CLTShift}.
\end{proof}

\section{LLT for the walker's position.}\label{sec5}
\subsection{The quenched LLT}
\begin{proof}[Proof of Theorem \ref{ThQLLTstrip}.]
  Take $\frac{1}{s}<u<\frac{1}{2}.$ Let $\ell=\ln^2 n,$ $\brk=k-\ell$.
  We claim that for $\mathbf{P}$-almost all $\omega$
\begin{equation}\label{return}
  \bbP_\omega(\exists k\leq n \;\;\exists m\in \naturals :
  X_m=k \text{ and } T_{\brk}<m-n^u)
\leq \frac{\brC(\omega)}{n^{100}}.
\end{equation}
Indeed, if $X_m=k$ and $m>T_{\brk}+n^u$
then one of the following events takes place:
\begin{align*} A_1=&\{X_t\in [k-2\ell, k+\ell]
\text{ for all } t\in [T_{\brk}, T_{\brk}+n^u]\},\\
A_2=&\{\exists t \in [T_{\brk}, T_{\brk}+n^u] \text{ such that } X_t<k-2\ell\}, \\
A_3=&\{\exists t \in [T_{\brk}, T_{\brk}+n^u] \text{ s. t. } X_t>k+\ell \text{ and then }
X \text{ backtracks to } k \}.
\end{align*}
$\bbP_\omega(A_2)$ and $\bbP_\omega(A_3)$ are $O(n^{-100})$ by Lemma \ref{LmBack}(b).
Take $\frac{1}{s}<u'<u''<u.$ If $A_1$ happens then there exists $k^*\in [k-2\ell, k+\ell]$
which is visited more than $n^{u''}$ times. However the  number of visits to $k^*$ has geometric
distribution with mean $\rho_\brk<C(\omega) n^{u'}.$ Thus $\bbP_\omega(A_1)\leq n^{-100}$ proving \eqref{return}.

Next we claim that given $R$ we can take $\brR$ so large that
if $k$ satisfies \eqref{NearBn1} then
\begin{equation}
\label{NTkClose}
|n-\EXP_\omega T_\brk|\leq \brR \sqrt{\brk}.
\end{equation}
Indeed
\begin{equation}
\label{NTk}
 n-\mathbb{E}_\omega T_\brk=
(n-\mathbb{E}_\omega T_{b_n})+(\mathbb{E}_\omega T_{b_n}-\mathbb{E}_\omega T_\brk).
\end{equation}
Observe that by definition
$ \mathbb{E}_\omega T_{b_n-1}<n\leq \mathbb{E}_\omega T_{b_n} $
and by Lemma \ref{LmETkFluct1}
$$\mathbb{E}_\omega (T_{b_n}-T_{b_n-1})=o(\sqrt{n})$$
so that the first term in \eqref{NTk} is $o(\sqrt{n}).$
Next, Lemma \ref{LmETkFluct1} also implies that
$$ \mathbb{E}_\omega(T_{b_n}-T_\brk)=a(b_n-\brk)+o(\sqrt{n}). $$
This implies \eqref{NTkClose} and shows moreover that
\begin{equation}
\label{NTDist}
 \frac{(n-\mathbb{E}_\omega T_\brk)^2}{k}\approx \frac{a^3 (b_n-\brk)^2}{n}.
\end{equation}

Thus for $j\in [0, n^u]$ we have due to Theorem \ref{LLThittingtime}
$$ \bbP_\omega(T_\brk=n-j)\approx \frac{1}{\sqrt{2\pi \brk}\brD} \exp-\left(\frac{(n-\mathbb{E}_\omega T_\brk)^2}{2\brD^2 \brk}\right).$$
On the other hand for $\bP$--almost every $\omega$ and all sufficiently large $n$
$$ \sum_{j=0}^{n^u} \bbP_\omega(\xi_j=(k,i)|X_0=\brk)=\rho_{(k,i)}+O\left(n^{-100}\rho_{(k,i)}\right)
=\rho_{(k,i)}+O\left(n^{-99}\right). $$
Thus
\begin{equation}
\label{PreLLT}
\bbP_\omega(\xi_n=(k,i))\approx
\frac{\rho_{(k,i)}}{\sqrt{2\pi \brk}\brD}
 \exp-\left(\frac{(n-\mathbb{E}_\omega T_\brk)^2}{2\brD^2 \brk}\right).
\end{equation}

Combining this with \eqref{NTDist}  we get
$$
\begin{aligned}
\bbP_\omega(\xi_n=(k,i))&\approx \frac{\sqrt{a} \rho_{(k,i)}}{\sqrt{2\pi  n}\brD} \exp-\left(\frac{(k-b_n)^2 a^3}{2\brD^2 n}\right)\\
&=\frac{ \rho_{(k,i)}}{\sqrt{2\pi  n} Da } \exp-\left(\frac{(k-b_n)^2 }{2 D^2 n}\right),
\end{aligned}
$$
where $D=\brD/a^{3/2}.$
\end{proof}

\subsection{The annealed LLT}
Given $k$ denote $\brk=k-\ln^2 n,$ $\hk=\frac{k+\brk}{2}.$

Let $\txi(n)$ be equal to $\xi(n)$ if the walker does not backtrack to $L_\brk$
during the time $[T_\hk, n]$
and $\txi(n)$ be the place of the first return to $L_\brk$ after $T_\hk$ otherwise.  In other words,
we stop our walk if it returns to $L_\brk$ after $T_\hk.$ By Lemma \ref{LmBack},
$\DS P(\txi(n)\neq \xi(n))=O\left(n^{-100}\right)$ and Theorem \ref{ThQLLTstrip} remains true with
$\xi$ replaced by $\txi.$ We need the following estimate.

\begin{lemma}
\label{LmConcPos}
The family of random variables
$$ \left\{\sqrt{n} \Prob_\omega(\txi(n)=z)\right\}_{n\in \naturals, z\in \bbS} $$
is uniformly integrable.
\end{lemma}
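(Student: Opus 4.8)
The plan is to show that the family $\big(\sqrt n\,\Prob_\omega(\txi(n)=z)\big)_{n\ge 1,\ z\in\bbS}$ is bounded in $L^{1+\delta}(\mathbf P)$ for some $\delta\in(0,1)$ with $1+\delta<s$ (possible since $s>2$); an $L^{1+\delta}$ bound gives uniform integrability in the usual way. Write $z=(k,i)$, $\brk=k-\ln^2 n$, and $N_{(k,i)}=\Card(t:\xi_t=(k,i))$. Since for $n\ge 2$ the event $\{\txi(n)=(k,i)\}$ is contained in $\{\xi_n=(k,i)\}$ (the other possible value of $\txi(n)$ lies in $L_\brk$ and $\brk\ne k$), and $\Prob_\omega(\xi_n=(k,i))\le 1$, it is enough to bound $n^{(1+\delta)/2}\,\mathbf E\big[\Prob_\omega(\xi_n=(k,i))^{1+\delta}\big]$ uniformly. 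Finitely many small $n$ are harmless because $\sqrt n\,\Prob_\omega(\txi(n)=z)\le\sqrt n$ there, so fix a large deterministic $n_0$ and take $n\ge n_0$. Let $v$ be the speed of the walk, so $v=1/a$ with $a$ as in \eqref{expect} and $\mathrm E(T_k)=ak$. I split into the \emph{bulk} $\tfrac{vn}2\le k\le 2vn$ and the complementary \emph{tails}.

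In the bulk, $\brk\ge\tfrac{vn}4\ge 1$ for $n\ge n_0$, and because the walk is transient to the right every visit to $L_k$ happens after the hitting time $T_\brk$ of $L_\brk$. Decomposing at $T_\brk$ via the strong Markov property,
\[
\Prob_\omega(\xi_n=(k,i))\;\le\;\Big(\sup_t\Prob_\omega(T_\brk=t)\Big)\sum_{j'=1}^m\mathbb E_{\omega,(\brk,j')}\!\big[N_{(k,i)}\big].
\]
For the first factor: conditionally on the enlarged environment $\tom=(\omega,\mathfrak i)$ the increments $\tau_j$ are independent and, by \eqref{EqC3}, each has $\sup_l V_{\omega,\mathfrak i}(\tau_j=l)\le 1-\tfrac1{1+\kappa}$, so the concentration estimate used for \eqref{QuenchConc} yields $\sup_t V_{\omega,\mathfrak i}(T_\brk=t)\le C/\sqrt{\brk}$ with a \emph{deterministic} $C$, and integrating over $\mathfrak i$ gives $\sup_t\Prob_\omega(T_\brk=t)\le C/\sqrt{\brk}$ for every $\omega$. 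For the second factor: by \eqref{EqC2*} the entry law of $L_\brk$ satisfies $\Prob_\omega(\xi_{T_\brk}=(\brk,j'))\ge\varepsilon$ for all $j'$ (it is an average of rows of the stochastic matrices $\zeta_{\brk-1}$, whose entries are all $\ge\varepsilon$), and decomposing $\rho_{(k,i)}=\mathbb E_\omega[N_{(k,i)}]$ at $T_\brk$ then gives $\sum_{j'}\mathbb E_{\omega,(\brk,j')}[N_{(k,i)}]\le\varepsilon^{-1}\rho_{(k,i)}$. Hence $\sqrt n\,\Prob_\omega(\txi(n)=(k,i))\le\frac{2C}{\varepsilon\sqrt v}\rho_{(k,i)}$, and Lemma \ref{LmTail}(a) gives $\mathbf E[\rho_{(k,i)}^{1+\delta}]\le C'$ uniformly in $(k,i)$ since $1+\delta<s$ — the required uniform bound in the bulk, with no random constants at all.

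In the tails I would prove the cheaper estimate $\mathrm P(\xi_n=(k,i))=O(1/n)$ with a deterministic constant, which suffices because then $n^{(1+\delta)/2}\mathbf E[\Prob_\omega(\xi_n=(k,i))^{1+\delta}]\le n^{(1+\delta)/2}\mathrm P(\xi_n=(k,i))=O\big(n^{(\delta-1)/2}\big)$, bounded as $\delta<1$. If $k>n$ then $\Prob_\omega(\xi_n=(k,i))=0$. If $k>2vn$, then $\{\xi_n=(k,i)\}\subset\{T_k\le n\}$ while $\mathrm E(T_k)=ak>2n$, so a polynomial moment bound $\mathrm E|T_k-\mathrm E T_k|^q=O(k^{q/2})$ for some $q\in(2,s)$ (available since $r(2)<1$; cf. \cite{G1,DG2}) together with $k\le n$ gives $\mathrm P(T_k\le n)=O(n^{-q/2})=o(1/n)$. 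If $k<\tfrac{vn}2$, choose an anchor layer $\ell^*\approx\max(\sqrt n,k)$ with $\ell^*-k\gtrsim\sqrt n$ and $\mathrm E(T_{\ell^*})\le\tfrac78 n$; then $\{\xi_n=(k,i)\}\subset\{T_{\ell^*}>n\}\cup\{X\text{ visits }k\text{ after }\ell^*\}$, the first event having probability $O(n^{-q/2})$ by the same moment bound and the second probability $O(\theta^{\sqrt n})$ by Lemma \ref{LmBack}(a). Thus $\mathrm P(\xi_n=(k,i))=O(1/n)$ throughout the tails, and the lemma follows.

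The step I expect to be the main obstacle is the \emph{deterministic} concentration bound $\sup_t\Prob_\omega(T_\brk=t)\le C/\sqrt{\brk}$ valid for every environment: this is what upgrades the quenched LLT (which holds only for $n\ge n_0(\omega)$, a random threshold) to a bound uniform in $n$. It rests on the conditional independence of the $\tau_j$ supplied by the enlarged-environment construction of \cite{G1}, together with the uniform ellipticity \eqref{EqC3} keeping each $\tau_j$'s largest atom bounded away from $1$. The remaining inputs — the ellipticity lower bound for the entry law of $L_\brk$, the polynomial moment bound for $T_k$ when $s>2$, and the backtracking estimate Lemma \ref{LmBack}(a) — are routine.
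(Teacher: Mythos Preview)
Your proof is correct and takes a genuinely different route from the paper's. Both arguments pivot on the deterministic concentration bound $\sup_t \Prob_\omega(T_\brk=t)\le C/\sqrt{\brk}$, obtained from \eqref{EqC3} via the enlarged environment and the inequality behind \eqref{QuenchConc}; the difference lies in how the occupation-time factor is handled. You bound $\sqrt n\,\Prob_\omega(\txi(n)=z)\le C\rho_{(k,i)}$ directly in the bulk and then invoke Lemma~\ref{LmTail}(a) (using $1+\delta<s$) to get a uniform $L^{1+\delta}$ bound, which yields uniform integrability. The paper instead splits $T_\brk=T'+T''$ at the halfway layer, combines Chebyshev on one half with the concentration bound on the other, and arrives at the product estimate $\sqrt n\,\Prob_\omega(\txi(n)=z)\le C\cdot\dfrac{\mathrm{Var}_\omega(T_\brk)}{n}\cdot\trho(z)$, where the two factors are \emph{independent} (one depends only on the environment left of $\brk$, the other only on the environment to the right) and each is uniformly integrable separately. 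Your bulk argument is shorter and avoids having to establish uniform integrability of $\mathrm{Var}_\omega(T_\brk)/n$ from the explicit formula in \cite{G1}; the price is the separate tails argument, which leans on an annealed moment bound $\mathrm E|T_k-\mathrm ET_k|^q=O(k^{q/2})$ for some $q\in(2,s)$ that, while standard under $r(2)<1$, is not stated verbatim in the references you cite. Since the lemma is only applied in the paper for $k$ satisfying \eqref{NearNa}, the tails case is in any event not needed for the downstream application.
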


\begin{proof}
Let $I_j=\{l: l-\EXP_\omega(T_\brk)\in [j \sqrt{n}, (j+1)\sqrt{n}\}.$ Then
$$ \Prob_\omega(\txi(n)=z)=\sum_l \Prob_\omega(T_\brk=l) \Prob_\omega(\txi(n)=z|T_\brk=l) $$
$$ \leq \sum_j \max_{l\in I_j} \Prob_\omega(T_k=l)
\max_{y\in \{1\dots m\} }
\sum_{t=j\sqrt n}^{(j+1)\sqrt{n}} \Prob(\txi(t)=z| \txi(0)=(\brk, y)) $$
\begin{equation}
\label{SqrtNSum}
 \leq \sum_j \max_{l\in I_j} \Prob_\omega(T_k=l) \trho(z)
\end{equation}
where $\trho(z)$ is the maximum over $\hz\in L_\hk$ of the expected total number of
visits to site $z$ by the walk $\txi$ started from $\hz.$

Next we claim that there is a constant $C$ such that for all $\omega\in \Omega$ and
all $l\in I_j(\omega)$ we have
\begin{equation}
\label{TimeLDLoc}
\Prob_\omega(\txi(n)=l)\leq \frac{C\; \mathrm{Var}_\omega(T_\brk)}{j^2 n^{3/2}}
\end{equation}
Indeed, let $T'=T_{\brk/2},$ $T''=T_\brk- T_{\brk/2}.$ Clearly
$$ \Prob_\omega(\txi(n)=l)\leq $$
$$\Prob_\omega\left(\txi(n)=l, |T'-\EXP_\omega(T')|\geq \frac{j\sqrt{n}}{2}\right)+
\Prob_\omega\left(\txi(n)=l, |T''-\EXP_\omega(T'')|\geq \frac{j\sqrt{n}}{2}\right). $$
We will estimate the first term, the second one is similar.
$$\Prob_\omega\left(\txi(n)=l, |T'-\EXP_\omega(T')|\geq \frac{j\sqrt{n}}{2}\right)\leq $$
$$\Prob_\omega\left( |T'-\EXP_\omega(T')|\geq \frac{j\sqrt{n}}{2}\right)
\Prob_\omega\left(\txi(n)=l\big| |T'-\EXP_\omega(T')|\geq \frac{j\sqrt{n}}{2}\right). $$
The first factor is smaller than
$$ \frac{4 \mathrm{Var}_\omega(T')}{j^2 n}\leq \frac{4 \mathrm{Var}_\omega(T_\brk)}{j^2 n} $$
due to Chebyshev inequality. On the other hand the
proof of \eqref{QuenchConc} shows that the second  factor  is smaller than $\frac{C}{\sqrt{n}}$
proving \eqref{TimeLDLoc}.

Rewriting \eqref{TimeLDLoc} as
$$ \max_{l\in I_j(\omega)} \Prob_\omega(T_\brk=l)\leq \frac{C \; \mathrm{Var}_\omega(T_k)}{j^2 n^{3/2}} $$
and summing over $j$ in \eqref{SqrtNSum} we obtain that
$$ \sqrt{n} \Prob_\omega(\txi(n)=z)\leq C\; \frac{\mathrm{Var}_\omega(T_\brk)}{n} \trho(z) . $$
Now the uniform integrability of the LHS follows from the following facts:

(a) $ \DS \frac{\mathrm{Var}_\omega(T_\brk)}{n}$ and $ \trho(z)  $ are independent (since
the first variable depends
only on environment to the left of $\brk$
the second variable depends
only on environment to the right of $\brk$);

(b) $ \DS \frac{\mathrm{Var}_\omega(T_\brk)}{n}$ is uniformly integrable as follows easily from the explicit
expression for $\mathrm{Var}_\omega(T_\brk)$ given in \cite[Equation (4.28)]{G1};

(c) $\{\trho(z)\}_{z\in \bbS}$ are uniformly integrable since $\trho(z)\leq C \rho(z)$ and
$\{\trho(z)\}_{z\in \bbS}$ are uniformly integrable due to Lemma \ref{LmTail}(a).
\end{proof}

\begin{proof}[Proof of Theorem \ref{ThAnnLLTs}]
Once the uniform integrability of $\{\sqrt{n} \Prob_\omega(\txi(n)=z)\}$ is established,
the derivation of  Theorem \ref{ThAnnLLTs} from Theorem \ref{ThQLLTstrip}
is similar to derivation of Theorem \ref{LLThittingtime} from the LLT in the enlarged environment
which is explained in Section \ref{ScLLTHit} so we just sketch the argument
leaving the details to the reader. The proof consists of the following steps.

(I) We have
$$ \mathrm{P}(\xi_n=(k,i))=\bE(\Prob_\omega(\xi_n=(k,i))). $$
Theorem \ref{ThQLLTstrip} and Lemma \ref{LmConcPos} allow us to replace the above expectation by
$$ \bE\left(\frac{\rho_{(k,i)}}{\sqrt{2\pi n} Da} \exp-\left[\frac{(k-b_n(\omega))^2}{2 D^2 n} \right] \right) .$$

(II) Asymptotic independence of $b_n$ and $\rho_{k, i}$ (which comes from the fact that
$\rho_{(k, i)}$ can be well approximated by a variable which depends only on the environment
to the right of $n-\ln^2 n$ while $b_n$ can be
well approximated by a variable which depends only on the environment from the left of
$n-\ln^2 n$) allows us to replace the last expectation by the product
\begin{equation}
\label{Product}
 \bE\left(\frac{\rho_{(k,i)}}{a} \right)
\bE\left(\frac{1}{\sqrt{2\pi n} D}  \exp-\left[\frac{(k-b_n(\omega))^2}{2 D^2 n} \right] \right) .
\end{equation}

(III) The first factor in \eqref{Product} equals to 1 due to Remark \ref{RmMeaningA}.

(IV) Asymptotic normality of $b_n(\omega)$ shows that the second factor in \eqref{Product}
is asymptotic to
$$ \bE\left(\frac{1}{\sqrt{2\pi n} \mathbf{D}}  \exp-\left[\frac{(k-b_n(\omega))^2}{2 \mathbf{D}^2 n} \right] \right)
$$
where $\mathbf{D^2}=D^2+\hD^2$ and $\hD$ is the limiting variance of
$\DS \frac{b_n(\omega)}{\sqrt{n}}$
(cf. the derivation of \eqref{NormConv} in Section \ref{ScLLTHit}).
\end{proof}

\section{Mixing for environment seen by the particle.}
\label{ScQMixEnv}
\begin{proof}[Proof of Theorem \ref{ThEPStrip}(b)]
As has been explained in Section \ref{SSEnvByPart}, the result is known in the recurrent case, so it remains to consider
the transient case.

Denote $\omega_n\de (P_n, Q_n, R_n).$
It suffices to prove \eqref{EqQEnv}
for a dense set of functions, in particular, it is
enough to consider the case when
$\Phi(\omega, k)$ depends only on $\omega_n$ for $|n|\leq M$ for some finite $M.$
Below we shall give the proof in the case $M=0.$ The case of arbitrary finite $n$ requires routine modifications
which are left to the reader. Thus $\Phi=\Phi(\omega_0, Y).$ Let $w_n$ be the vector with
components $w_n(k) =\Phi(\omega_n, k).$
We have
$$ \EXP_\omega\left(\Phi\left(\omega^{(N)}, Y_N\right)\right)=
\sum_{k=1}^m \sum_{n=-\infty}^\infty \Prob_\omega(X_N=n, Y_N=k) w_n(k) . $$
By the quenched CLT for each $\eps$ there exists $R$ such that for all sufficiently large
$N,$ the sum over $n$ such that $|n-b_N|\geq R\sqrt{N}$ is less than $\eps$
where $b_N(\omega)$ is defined by \eqref{QDrift}.

Next, fix small constants $\delta_1, \delta_2$ and let
$$N_l=l^{2-\delta_1}, \quad I_{j, l}=
\left[N_l+j N_l^{\frac{1}{2}-\delta_2}, \; N_l+(j+1) N_l^{\frac{1}{2}-\delta_2}\right] $$
and let $x_{l,j}$ be the center of $I_{l,j}.$
Let $$\mathfrak{r}_{l,j} =\sum_{n\in I_{l,j}} \sum_{k=1}^m \rho_{n, k} w_n(k)=
\sum_{n\in I_{l,j}} \sum_{k=1}^m \rho_{n, k} w_n(k)+\cO\left(\theta^{N_l}\right).
 $$
 Lemma \ref{lem2} shows that
$$ \bP\left(\left| \frac{\mathfrak{r}_{l, j}}{N_l^{1/2-\delta_2}}-\fa\right|>\eps \right)\leq C
\frac{1}{\eps^{2+\delta} N_l^{(1/2-\delta_2)(1+\delta/2)}}
=\frac{C}{\eps^{2+\delta} l^{(1/2-\delta_2)(1+\delta/2)(2-\delta_1)}}
. $$
Thus if
$$\left(\frac{1}{2}-\delta_2\right)\left(1+\frac{\delta}{2}\right)\left(2-\delta_1\right)-2(2-\delta_1) \delta_2>1 $$
then
$$ \sum_{l=1}^\infty \sum_{|j|<N_l^{2\delta_2}}
\bP\left(\left| \frac{\mathfrak{r}_{l, j}}{N_l^{1/2-\delta_2}}-\fa\right|>\eps \right)<\infty, $$
so Borel--Cantelli Lemma gives that
\begin{equation}
\label{EnvErg}
\frac{\mathfrak{r}_{l,j}}{N_l^{1/2-\delta_2}}\to \fa
\end{equation}
uniformly for $|j|<N_l^{2\delta_2}.$ Given $N$ let $l$ be such that $N_l\leq N<N_{l+1}.$
Now the quenched LLT (Theorem \ref{ThQLLTstripRec}) gives that
$$ \sum_{n=b_N-R\sqrt{N}}^{b_N+R\sqrt{N}} \sum_{k=1}^m \Prob_\omega\left(\xi_N=(n,k)\right) w_n(k)
\approx
\sum_j \exp\left[-\frac{\left(x_{l,j}-b_N\right)^2}{2 D^2 N}\right]
\sum_{n\in I_{l,j}} \sum_{k=1}^m
\frac{\rho_{(n, k)}} {\sqrt{2\pi N} Da} w_n(k)
$$
where the first summation is over $j$ such that
$\DS  I_{l,j}\cap \left[b_N-R\sqrt{N}, b_N+R\sqrt{N}\right] \neq\emptyset$
and "$\approx$" holds since
$\exp\left[-\frac{\left(x-b_N\right)^2}{2 D^2 N}\right]$ is approximately constant when $x$ varies over $I_{l,j}.$

Next, \eqref{EnvErg} allows us to replace the sums over $n$ and $k$ by
$N_{l}^{1/2-\delta_2} \fa .$ Performing summation over $j$ we obtain the Riemann sum for
$$ \fa \int_{-R}^{R} \; \frac{1}{\sqrt{2\pi} D} e^{-x^2/2D^2} dx .$$
Since $R$ can be chosen arbitrarily large, \eqref{EqQEnv}  follows.
\end{proof}

\end{document}